\documentclass[11pt]{amsart}
\usepackage{amsthm,amsmath,amssymb}
\usepackage{xypic}

\newcommand{\de}{\delta}
\newcommand{\ep}{\varepsilon}
\pagestyle{plain}

\newcommand{\dblq}{{/\!/}}
\newcommand{\HH}{{\mathbb{H}}}
\newcommand{\EE}{{\mathbb{E}}}
\newcommand{\calO}{{\mathcal O}}
\newcommand{\calS}{{\mathcal S}}
\newcommand{\calH}{{\mathcal H}}
\newcommand{\calF}{{\mathcal F}}
\newcommand{\calI}{{\mathcal I}}
\newcommand{\calA}{{\mathcal A}}
\newcommand{\calC}{{\mathcal C}}
\newcommand{\calL}{{\mathcal L}}
\newcommand{\calM}{{\mathcal M}}
\newcommand{\calJ}{{\mathcal J}}

\newcommand{\calE}{{\mathcal E}}

\newcommand{\calX}{{\mathcal X}}

\newcommand{\calD}{{\mathcal D}}
\newcommand{\calR}{{\mathcal R}}
\newcommand{\calQ}{{\mathcal Q}}
\newcommand{\calU}{{\mathcal U}}
\newcommand{\calN}{{\mathcal N}}

\def\GG{{\textbf G}}
\def\PP{{\textbf P}}
\def\BB{{\textbf B}}

\newcommand{\op}{\operatorname}

\newcommand{\Sp}{\op{Sp}(2g,{\bf Z})}

\newcommand{\Sym}{\op{Sym}}

\newcommand{\rk}{\op{rk}}

\newcommand{\Pic}{\op{Pic}}
\newcommand{\Jac}{\op{Jac}}
\newcommand{\Ker}{\op{Ker}}

\newcommand{\even}{{\op{even}}}

\newcommand{\T}{\Theta}
\renewcommand{\t}{\theta}
\newcommand{\tn}{\t_{\rm null}}

\renewcommand\tt[2]{\t\left[\begin{matrix}#1\\ #2\end{matrix}\right]}

\newcommand{\p}{\partial}
\newcommand{\Sn}{\calS_{\rm null}}
\newcommand{\Sing}{\op{Sing}}

\def\aa{\overline{\mathcal{A}}}
\def\mm{\overline{\mathcal{M}}}
\def\rr{\overline{\mathcal{R}}}
\def\qq{\overline{\mathcal{Q}}}
\def\uu{\overline{\mathcal{U}}}

\def\pr{\widetilde{\mathcal{R}}}

\theoremstyle{plain}
\newtheorem{thm}{Theorem}[section]
\newtheorem{lm}[thm] {Lemma}
\newtheorem{prop}[thm]{Proposition}
\newtheorem{cor}[thm]{Corollary}

\theoremstyle{definition}
\newtheorem{df}[thm] {Definition}
\newtheorem{rem}[thm]{Remark}

\begin{document}
\title{Singularities of theta divisors and the geometry of $\calA_5$}
\author{G.  Farkas}
\address{Humboldt Universit\"at zu Berlin, Institut f\"ur Mathematik, Unter den Linden 6, 10099 Berlin, Germany}
\email{farkas@math.hu-berlin.de}
\author{S. Grushevsky}
\address{Stony Brook University, Department of Mathematics, Stony Brook, NY 11790-3651, USA}
\email{sam@math.sunysb.edu}
\author{R. Salvati Manni}
\address{Universit\`a ``La Sapienza'', Dipartimento di Matematica, Piazzale A. Moro 2, I-00185, Roma,   Italy}
\email{salvati@mat.uniroma1.it}
\author{A.  Verra}
\address{Universit\`a ``Roma 3'', Dipartimento di Matematica, Largo San Leonardo Murialdo I-00146 Roma, Italy}
\email{verra@mat.uniroma3.it}

\begin{abstract}
We study the codimension two locus $H$ in $\calA_g$  consisting of principally polarized abelian varieties whose theta divisor has a singularity {}that{} is not an ordinary double point. We compute the class $[H]\in CH^2(\calA_g)$ for every $g$. For $g=4$, this turns out to be the locus of Jacobians with a vanishing theta-null. For $g=5$, via the Prym map we show that $H\subset \calA_5$ has two components, both unirational, which we describe {}completely{}.
We then determine the slope of the effective cone of $\aa_5$ and show that the component $\overline{N_0'}$ of the Andreotti-Mayer divisor has minimal slope.
\end{abstract}
\maketitle

\section*{Introduction}

The theta divisor $\T$ of a generic principally polarized abelian variety (ppav) is smooth. The ppav $(A,\T)$ with a singular theta divisor form the Andreotti-Mayer divisor $N_0$
in the moduli space $\calA_g$, see \cite{anma} and \cite{beauville}. The divisor $N_0$ has two irreducible components, see \cite{mumforddimag} and
\cite{debarredecomposes}, which are denoted $\tn$ and $N_0'$: here $\tn$ denotes the locus of ppav for which the theta divisor has a singularity at a two-torsion
point, and $N_0'$ is the closure of the locus of ppav for which the theta divisor has a singularity not at a two-torsion point. The theta divisor $\T$  of a generic
ppav $(A,\T)\in\tn$ has a unique singular point, which is a double point. Similarly, the theta divisor of a generic element of $N_0'$ has two distinct double singular points
$x$ and $-x$. Using this fact, one can naturally assign multiplicities to both components of $N_0$ such the following equality of cycles holds, see \cite{mumforddimag},\cite{debarredecomposes}
\begin{equation}\label{divisor}
 N_0=\tn+2N_0 '.
\end{equation}
As it could be expected, generically for both components the double point is an ordinary double point (that is, the quadratic tangent cone to the theta divisor at such a point
has maximal rank  $g$ {}---{} equivalently, the Hessian matrix of the theta function at such a point is non-degenerate).
Motivated by a conjecture of H.~Farkas \cite{farkashvanishing}, in \cite{grsmgen4} two of the authors considered the locus in $\tn$ in genus 4 where the double point is not
ordinary. In \cite{grsmordertwo} this study was extended to arbitrary $g$, considering the sublocus $\tn^{g-1}\subset\tn$ parameterizing ppav $(A,\T)$ with a singularity at a two-torsion point, that is not an ordinary double point of $\T$. In particular it has been proved that
\begin{equation}\label{cap}
\tn^{g-1}\subset \tn\cap N_0'.
\end{equation}
In fact the approach yielded a more precise statement:  Let $\phi:\calX_g\to\calA_g$ be the universal family of ppav over the orbifold $\calA_g$ and $\calS\subset\calX_g$ be the locus of singular points of theta divisors. Note that $\calS$ can be viewed as a
subscheme of $\calX_g$ given by the vanishing of the theta functions and all its partial derivatives, see Section 1. Then $\calS$ decomposes into three equidimensional components \cite{debarredecomposes}: $\Sn$, projecting to $\tn$, $\calS'$, projecting to $N_0'$, and $\calS_{\rm dec}$, projecting (with $(g-2)$-dimensional fibers) onto $\calA_1\times\calA_{g-1}$. It is proved in \cite{grsmordertwo} that set-theoretically, $\tn^{g-1}$ is the image in $\calA_g$ of the intersection $\Sn\cap\calS'$.
An alternative proof of these results has been found by Smith and Varley \cite{smvagen4},\cite{smvageng}.

\smallskip
It is natural to investigate the non-ordinary double points on the other component $N_0'$ of the Andreotti-Mayer divisor.
Similarly to $\tn^{g-1}$, we define $N_0'^{g-1}$, or, to simplify notation,  $H$,
to be the closure in $N_0'$ of the locus of ppav whose theta divisor has a non-ordinary double point singularity.
Note that $H$ is the pushforward under $\phi$ of a subscheme $\mathcal{H}$ of $\calX_g$ given by a Hessian condition on theta functions. In particular $H$ can be viewed as a codimension $2$ cycle (with multiplicities) on $\calA_g$.
Since an explicit modular form defining $N_0'$ and the singular point is not known, we
consider the cycle
\begin{equation}\label{cup}
 N_0^{g-1}:=\tn^{g-1}+2N_0'^{g-1}=\tn^{g-1} +2H.
\end{equation}

We first note that $\tn^{g-1}$ is a subset of  $H$. Then, after recalling that the Andreotti-Mayer loci $N_i$ are defined as consisting of  ppav $(A,\T)\in \calA_g$ with $\mathrm{dim} \  \mathrm{Sing}(\T)\geq i $, we establish the set-theoretical inclusion $N_i\subset H$, for $i\geq 1$.
 From this we deduce:
\begin{prop}\label{prop:ne}
For $g\ge 5$ we have $\tn^{g-1}\subsetneq  H$.
\end{prop}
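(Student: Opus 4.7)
\smallskip

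\noindent\textbf{Proof plan.} The proposition asserts a strict inclusion, and both $\tn^{g-1}\subseteq H$ and the set-theoretical inclusion $N_i\subset H$ for $i\geq 1$ are recorded in the paragraph immediately preceding the statement. The task therefore reduces to exhibiting, for each $g\geq 5$, a single ppav in $H\setminus \tn^{g-1}$.

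The natural candidates are Jacobians of smooth curves. By Riemann's singularity theorem, the theta divisor of $\Jac(C)$ has singular locus of dimension at least $g-4$ (generically equal to $W_{g-4}(C)$). For $g\geq 5$ this forces $\dim\Sing(\T)\geq 1$, so the entire Torelli locus $\calJ_g$ is contained in $N_{g-4}\subseteq N_1$, and hence in $H$ by the inclusion $N_1\subset H$ quoted above.

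On the other hand, since $\tn^{g-1}\subseteq\tn$, every ppav in $\tn^{g-1}$ possesses a vanishing theta-null. The locus of smooth curves of genus $g\geq 2$ whose Jacobian has a vanishing theta-null is a proper closed subvariety of $\calM_g$, so a general curve $C$ satisfies $\Jac(C)\notin\tn$, and a fortiori $\Jac(C)\notin\tn^{g-1}$. This produces the required element of $H$ outside $\tn^{g-1}$, and the strict inclusion follows.

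The main difficulty lies not in this short argument but in the ingredient $N_1\subset H$ invoked above; the heuristic reason for that inclusion is that at a singular point of $\T$ lying on a positive-dimensional component of $\Sing(\T)$, the Hessian of the theta function is forced to degenerate along the tangent direction to that component, so the singularity is automatically non-ordinary. Once this inclusion is in place, the verification above completes the proof for all $g\geq 5$.
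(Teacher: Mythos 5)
Your argument is correct and coincides with the route the paper itself takes in Corollary \ref{Mgin}: $\calJ_g\subset N_1\subset H$ for $g\ge 5$ (via the proposition that $N_1\subset H$), while a general Jacobian carries no vanishing theta-null, so $\calJ_g\nsubseteq \tn\supseteq\tn^{g-1}$. One small slip: by Riemann's singularity theorem the singular locus of $\T_{\Jac(C)}$ is $W^1_{g-1}(C)$, of dimension $\ge g-4$ by Brill--Noether existence, not $W_{g-4}(C)$; this does not affect the argument, since only the dimension bound is used. For the record, the paper's formally labelled proof of this proposition (in Section 4) is a different, cycle-theoretic argument: it compares the classes $[N_0^{g-1}]$ and $3[\tn^{g-1}]$ from Propositions \ref{propunu} and \ref{propdoi} and observes they disagree for $g\ge 5$. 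Your geometric argument is the one the introduction advertises and is, if anything, more informative, since it exhibits an explicit locus ($\calJ_g$) in $H\setminus\tn^{g-1}$ rather than merely detecting a discrepancy of degrees.
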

To further understand the situation, especially in low genus, we compute the class:

\begin{thm}\label{thm:class}
The class of the cycle  $H$ inside $\calA_g$ is equal to
$$
 [H]=\left( \frac{g!}{16}(g^3+ 7g^2+18g+24)-(g+4)2^{g-4}( 2^g+1)\right)\lambda_1^2\in CH^2(\calA_g).
$$
\end{thm}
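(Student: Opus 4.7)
The plan is to realize the combined cycle $[N_0^{g-1}]=[\tn^{g-1}]+2[H]$ as the pushforward of an explicit degeneracy scheme from the universal family $\phi:\calX_g\to\calA_g$, compute it by a Chern-class calculation together with Mumford's pushforward formulas on $\calA_g$, and finally subtract the known class of $\tn^{g-1}$ from \cite{grsmordertwo} to isolate $[H]$.

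Let $\calL$ be the line bundle on $\calX_g$ of which the universal theta function is a section, set $t:=c_1(\calL)=[\T]$, and write $\lambda_k:=c_k(\EE)$ for the Chern classes of the Hodge bundle. Since $\Omega^1_{\calX_g/\calA_g}=\phi^*\EE$, the fiberwise differential $d\theta$ is a section of $\calL\otimes\phi^*\EE$, so $\calS$ is the zero scheme of a joint section of the rank-$(g+1)$ bundle $\calL\oplus(\calL\otimes\phi^*\EE)$. As recalled in the introduction, $\dim\calS$ equals the expected value, giving
\[
[\calS]\;=\;t\cdot c_g(\calL\otimes\phi^*\EE)\;=\;\sum_{k=0}^{g}t^{\,g-k+1}\phi^*\lambda_k \;\in\; CH^{g+1}(\calX_g).
\]
At a point of $\calS$, the Hessian $d^2\theta$ is intrinsically a section of $\Sym^2\phi^*\EE\otimes\calL$, whose determinant lies in $\calL^{\otimes g}\otimes\phi^*(\det\EE)^{\otimes 2}$ and cuts out $\calH\subset\calS$. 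Assuming $\calH$ has expected codimension one in $\calS$,
\[
[\calH]\;=\;[\calS]\cdot(g\,t+2\,\phi^*\lambda_1) \;\in\; CH^{g+2}(\calX_g).
\]
Since $\phi|_{\Sn}$ is generically one-to-one and $\phi|_{\calS'}$ is generically two-to-one, while for $g\ge 4$ the image $\phi(\calS_{\rm dec})=\calA_1\times\calA_{g-1}$ has codimension $g-1\ge 3$ with positive-dimensional fibers, the definition \eqref{cup} identifies $\phi_*[\calH]=[N_0^{g-1}]$ in $CH^2(\calA_g)$.

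The pushforward monomials $\phi_*(t^{g+r})$ are computed by Grothendieck--Riemann--Roch applied to $\calL$: the principal polarization ensures $R^{>0}\phi_*\calL=0$, and combined with $T_{\calX_g/\calA_g}=\phi^*\EE^*$ and Mumford's relation $\lambda_1^2=2\lambda_2$ this yields the classical identities
\[
\phi_*(t^g)=g!,\qquad \phi_*(t^{g+1})=\tfrac{(g+1)!}{2}\lambda_1,\qquad \phi_*(t^{g+2})=\tfrac{(g+2)!}{8}\lambda_1^2.
\]
Substituting these into the expansion of $\phi_*[\calH]$, retaining only terms with $k\le 2$, and applying $\lambda_1^2=2\lambda_2$ once more produces
\[
[N_0^{g-1}]\;=\;\tfrac{g!}{8}\bigl(g^3+7g^2+18g+24\bigr)\lambda_1^2.
\]
Inserting $[\tn^{g-1}]=(g+4)\cdot 2^{g-3}(2^g+1)\lambda_1^2$ into $[H]=\tfrac12\bigl([N_0^{g-1}]-[\tn^{g-1}]\bigr)$ then yields the formula asserted in the theorem; note that the factor $2^{g-1}(2^g+1)$ counting even theta characteristics is precisely what is accounted for by the $\tn^{g-1}$ subtraction.

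The principal technical obstacle is the rigor of the degeneracy-cycle interpretation: one has to verify that at generic points of both $\tn^{g-1}$ and $N_0'^{g-1}$ the subscheme $\calH\subset\calS$ has the expected codimension one, and that the local multiplicities implicit in the Chern-class formula for $[\calH]$ match the multiplicities recorded in \eqref{cup}. A secondary technical point is the derivation of the pushforward formulas $\phi_*(t^{g+r})$, which depends on the rational triviality of $\phi_*\calL$ on $\calA_g$ and on the Todd-class expansion of $\EE^*$.
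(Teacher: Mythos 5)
Your proposal is correct and follows essentially the same route as the paper: realize $[N_0^{g-1}]$ as $\phi_*\bigl([\calS]\cdot(g[\T]+2\phi^*\lambda_1)\bigr)$ using the Hessian determinant as a section of $\calO(g\T)\otimes\phi^*(\det\EE)^{\otimes 2}$, push forward via $\phi_*(\T^{g})=g!$, $\phi_*(\T^{g+1})=\tfrac{(g+1)!}{2}\lambda_1$, $\phi_*(\T^{g+2})=\tfrac{(g+2)!}{8}\lambda_1^2$ and $c_2(\EE)=\lambda_1^2/2$, discard $\calS_{\mathrm{dec}}$ for dimension reasons when $g\ge 4$, and subtract $[\tn^{g-1}]=(g+4)2^{g-3}(2^g+1)\lambda_1^2$. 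The only cosmetic difference is that you invoke Grothendieck--Riemann--Roch for the pushforward monomials where the paper simply cites Mumford's computation, and your closing caveats about expected codimension and multiplicities are exactly the points the paper handles via the discussion of the components of $\calH$ in Section 2.
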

As usual, $\lambda_1:=c_1(\EE)$ denotes the first Chern class of the Hodge bundle and   $CH^i$   denotes  the ${\bf Q}$-vector space  parameterizing algebraic cycles of codimension $i$ with rational coefficients modulo rational equivalence.
Comparing classes and considering the cycle-theoretic inclusion $3\tn^{3}\subset H$, we get the following result, see Section 4 for details:
\begin{thm}\label{thm:genus4}
In genus $4$ we have the set-theoretic equality
$\tn^{3}= H$.
\end{thm}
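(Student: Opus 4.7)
My strategy is to combine the set-theoretic inclusion $\tn^{g-1}\subseteq H$ (valid in arbitrary genus) with a Chow-theoretic class comparison in $CH^2(\calA_4)$, using Theorem~\ref{thm:class} together with the cycle-theoretic inclusion $3\tn^3\subseteq H$ noted just above the theorem.

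I will first specialize Theorem~\ref{thm:class} to $g=4$; direct substitution gives
\[ [H]=\left(\frac{24}{16}(64+112+72+24)-8\cdot(2^4+1)\right)\lambda_1^2 \;=\; 272\,\lambda_1^2\in CH^2(\calA_4). \]
Next I will use the cycle-theoretic inclusion $3\tn^3\subseteq H$: along $\tn^3$ the Hessian condition cutting out $\mathcal{H}\subseteq\calX_4$ vanishes to scheme-theoretic order $3$, so one may write $[H]=3[\tn^3]+[W]$ in $CH^2(\calA_4)$ for an effective residual cycle $W$ whose support consists of those components of $H$ not contained in $\tn^3$. Combining with the class formula for $[\tn^{g-1}]$ from the prior work of Grushevsky--Salvati Manni, which specializes to $3[\tn^3]=272\,\lambda_1^2$, forces $[W]=0$ in $CH^2(\calA_4)$.

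To promote this class-level vanishing to cycle-level vanishing I will use that $CH^2(\calA_4)$ is generated over ${\bf Q}$ by $\lambda_1^2$ and that $\lambda_1$ is big on every toroidal compactification $\overline{\calA_4}$: any irreducible codimension-two subvariety $Y\subset\calA_4$ satisfies $\overline Y\cdot\lambda_1^{8}>0$, hence $[Y]\neq 0$. Applied to each irreducible component of $W$, this forces $W=\emptyset$, so $\operatorname{Supp}(H)=\operatorname{Supp}(\tn^3)$ and the theorem follows.

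The main obstacle, I expect, will be verifying the exact multiplicity $3$ in the inclusion $3\tn^3\subseteq H$: the class comparison balances only because this multiplicity is precisely $3$, rather than $1$ or $2$. Establishing it rigorously requires a careful local analytic computation on $\calX_4$ near a two-torsion non-ordinary singular point of the theta function, in the spirit of the analysis of $\Sn\cap\calS'$ in \cite{grsmordertwo}.
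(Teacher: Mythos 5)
Your overall strategy (class comparison in $CH^2(\calA_4)$ plus positivity of $\lambda_1$) is in the same spirit as the paper's, but the way you set up the comparison contains an arithmetic error that makes the argument collapse. The correct specializations are $[H]=272\,\lambda_1^2$ and also $[\tn^{3}]=272\,\lambda_1^2$ (from Proposition \ref{propdoi}, $(g+4)2^{g-3}(2^g+1)=8\cdot 2\cdot 17=272$ for $g=4$); your claim that $3[\tn^{3}]=272\,\lambda_1^2$ is false. Consequently your decomposition $[H]=3[\tn^{3}]+[W]$ with $W$ effective would force $[W]=-544\,\lambda_1^2$, which is impossible since $\lambda_1$ is nef and big: the multiplicity-three inclusion does not live inside $H$. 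The factor $3$ actually arises from the cycle identity $N_0^{3}=\tn^{3}+2H$ of formula (\ref{cup}) together with the set-theoretic inclusion $\tn^{3}\subset H$ with multiplicity at least one, which yields $3\tn^{3}$ as a subcycle of $N_0^{3}$, whose class is $3\cdot 272\,\lambda_1^2$ by Proposition \ref{propunu}. (The phrase ``the cycle-theoretic inclusion $3\tn^{3}\subset H$'' in the introduction is evidently a slip for $3\tn^{3}\subset N_0^{3}$; taking it literally, as you did, is inconsistent with the computed classes.) In particular there is no need for, and you give no proof of, the assertion that the Hessian condition vanishes to scheme-theoretic order $3$ along $\tn^{3}$; only multiplicity $\geq 1$ is used.

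There is a second gap at the end: promoting $[W]=0$ to $W=\emptyset$ via $\overline{Y}\cdot\lambda_1^{8}>0$ only excludes residual components of codimension exactly two. Since $\calH$ is cut out by $g+2$ equations but $\calS^{g-1}$ is not equidimensional and $\phi$ may contract components, $H$ could a priori contain extra components of codimension $\geq 3$ in $\calA_4$, and these are invisible in $CH^2(\calA_4)$. The paper closes exactly this gap by a geometric argument: in genus $4$ one has $N_0'=\overline{\calJ_4}$, and the Riemann Singularity Theorem shows that the theta divisor of a genus $4$ Jacobian has a non-ordinary double point only when the singularity sits at a two-torsion point, i.e.\ only on $\tn^{3}$. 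Some geometric input of this kind is indispensable; the class computation alone cannot finish the proof.
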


We then turn to genus $5$ with the aim of obtaining a  geometric description of $H\subset \calA_5$ via the {}dominant{}
Prym map $P:\calR_6\rightarrow \calA_5$. A key role in the study of the Prym map is played by its branch divisor, which in this case equals $N_0'\subset \calA_5$, and its ramification divisor $\calQ\subset \calR_6$. We introduce the \emph{antiramification} divisor $\calU\subset \calR_6$ defined cycle-theoretically by the equality $$P^*(N_0')=2\calQ+\calU.$$
Using the geometry of the Prym map, we describe both $\calQ$ and $\calU$ explicitly in terms of Prym-Brill-Noether theory. For a Prym curve $(C, \eta)\in \calR_g$ and an integer $r\geq -1$, we recall that $V_r(C, \eta)$ denotes the Prym-Brill-Noether locus (see Section 5 for a precise definition). It is known \cite{We85} that $V_r(C, \eta)$ is a Lagrangian determinantal variety of expected dimension $g-1-{r+1\choose 2}$.
{}We denote{} $\pi:\calR_g\rightarrow \calM_g$ the forgetful map. Our result is the following:
\begin{thm}\label{ramantiram}
The ramification divisor $\calQ$ of the Prym map $P:{}\calR_6{}\rightarrow \calA_5$ equals the Prym-Brill-Noether divisor in $\calR_6$, that is,
$$\calQ=\left\{(C, \eta)\in \calR_6: V_3(C, \eta)\neq 0\right\}.$$
The antiramification divisor is the pull-back of the Gieseker-Petri divisor from $\calM_6$, that is,
$\calU=\pi^*(\mathcal{GP}_{6, 4}^1)$. The divisor $\calQ$ is irreducible and reduced.
\end{thm}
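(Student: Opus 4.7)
The plan is to study $\calQ$ via the codifferential of the Prym map $P$, and then identify the antiramification complement by comparing rank strata of quadrics through the Prym-canonical curve. The codifferential of $P$ at a point $(C,\eta)\in\calR_6$ is the symmetric multiplication map
\begin{equation*}
\mu_{(C,\eta)}\colon \Sym^2 H^0(C,\omega_C\otimes\eta)\longrightarrow H^0(C,\omega_C^{\otimes 2}),
\end{equation*}
between a source and target both of dimension $15$ (using $g=6$, so $\binom{g}{2}=3g-3=15$). The ramification divisor $\calQ$ is therefore scheme-theoretically the vanishing of $\det\mu$; equivalently, $(C,\eta)\in\calQ$ if and only if the Prym-canonical image $\Gamma:=\varphi_{|\omega_C\otimes\eta|}(C)\subset\mathbb{P}^4$ lies on a quadric.

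To identify $\calQ$ with $\{V_3\neq\emptyset\}$, I would match rank-$3$ quadrics through $\Gamma$ with points of $V_3(C,\eta)$, using Welters' description of Prym-Brill-Noether loci. A rank-$3$ quadric $Q\supset\Gamma$ has as singular locus a line $\ell\subset\mathbb{P}^4$; projecting $\Gamma$ from $\ell$ onto the plane conic base of the cone $Q$ yields a $2{:}1$ correspondence which pulls back, through the \'etale double cover $\tilde C\to C$, to a line bundle $L\in\Pic^{2g-2}(\tilde C)$ with $\op{Nm}(L)=K_C$ and $h^0(L)\geq 4$ of correct parity, i.e.\ to an element of $V_3(C,\eta)$. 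Since $V_3$ has expected dimension $g-1-\binom{4}{2}=-1$ by the formula recalled before the theorem, the corresponding divisor in $\calR_6$ is cut out by a single geometric condition which matches the vanishing of $\det\mu$ at a general point of $\calQ$; a transversality check then upgrades this to a cycle-theoretic identification.

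For the antiramification, determined by $\calU=P^*(N_0')-2\calQ$, I would analyze rank-$4$ quadrics through $\Gamma$: such a $Q$ is a cone over a smooth quadric surface $Q'\subset\mathbb{P}^3$ with apex a point, whose two rulings cut on $C$ a pair of residual pencils $M=g^1_4$ and $K_C-M=g^3_6$. The existence of such a rank-$4$ quadric in the Prym-canonical $\mathbb{P}^4$ (rather than in the canonical $\mathbb{P}^{g-1}=\mathbb{P}^5$) is an $\eta$-twisted version of a Petri syzygy, and translates directly into the failure of injectivity of the classical Petri map $H^0(M)\otimes H^0(K_C-M)\to H^0(K_C)$. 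This produces a bijection between rank-$4$ quadrics through $\Gamma$ and pairs $(C,M)$ above the Gieseker-Petri locus, yielding the equality $\calU=\pi^*(\mathcal{GP}^1_{6,4})$.

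For irreducibility and reducedness of $\calQ$, I would introduce the incidence variety $\widetilde{\calQ}:=\{(C,\eta,L):L\in V_3(C,\eta)\}$ with its forgetful map to $\calR_6$. The map $\widetilde{\calQ}\to\calQ$ is birational because the expected dimension of $V_3$ is $-1$, so at a general $(C,\eta)\in\calQ$ there is a unique $L$. Irreducibility of $\calQ$ therefore reduces to irreducibility of $\widetilde{\calQ}$, which follows from a standard monodromy argument on the universal Prym together with known connectedness results of Debarre for Prym-Brill-Noether loci; reducedness follows from the transversal vanishing of $\det\mu$ at a general point, checkable by deforming $(C,\eta)$ along a Gauss direction at $L$. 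The main technical obstacle will be the clean identification of rank-$3$ and rank-$4$ quadrics through $\Gamma$ with Prym-Brill-Noether and Gieseker-Petri data, including careful tracking of the parity conventions in Welters' theory—this is the direct Prym-canonical analog of Beauville's and Debarre's classical quadric analysis, but must be carried out explicitly in genus $6$.
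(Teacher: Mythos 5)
Your opening step is right and agrees with the paper: the codifferential of $P$ at $(C,\eta)$ is the multiplication map $\Sym^2 H^0(C, K_C\otimes \eta)\to H^0(C, K_C^{\otimes 2})$ between $15$-dimensional spaces, so $\calQ$ is the locus where the Prym-canonical curve lies on a quadric. But the central mechanism you propose --- matching rank-$3$ quadrics through $\varphi_{K_C\otimes\eta}(C)$ with points of $V_3(C,\eta)$, and rank-$4$ quadrics with the Gieseker--Petri condition --- is not correct, and the rank bookkeeping is exactly where it breaks. For a stable singularity $L$ with $h^0(\tilde C, L)=4$, the quadric through the Prym-canonical curve supplied by Welters/Mumford theory is the Pfaffian tangent cone $Q_L=\delta^{-1}(\GG^*)$, the pullback of the rank-$6$ Pl\"ucker quadric under $\PP\left((\mu_L^-)^{\vee}\right):\PP^4\to\PP^5$; for generic $(C,\eta)\in\calQ$ this has full rank $5$ in $\PP^4$ (an ordinary double point of $\Xi$). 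The condition $\rk(Q_L)\leq 4$ is equivalent, by Proposition \ref{rk4sing}, to $L$ being simultaneously a stable and an exceptional singularity, and this cuts out the codimension-\emph{two} loci $\calQ_4$ and $\calQ_5$ feeding into $H$ --- not the divisor $\calQ$. A rank-$3$ condition is more special still. Likewise, $\calU=\pi^*(\mathcal{GP}^1_{6,4})$ is a condition on $C$ alone, independent of $\eta$: the Petri relation $K_C\cong M^{\otimes 2}(x+y)$ produces a quadric through the \emph{canonical} curve in $\PP^5$, whereas a rank-$4$ quadric through the \emph{Prym-canonical} curve in $\PP^4$ forces $K_C\otimes\eta=A\otimes A'$, coupling $\eta$ to the pencils and again giving codimension $2$. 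So your proposed bijections would at best detect proper subloci of $\calQ$ and $\calU$, and the "transversality check" you defer cannot rescue a set-theoretic mismatch.

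For comparison, the paper's route is: (i) the inclusion $\{V_3\neq\emptyset\}\subseteq\calQ$ comes from the fact that the (generically rank-$5$) Pfaffian quadric contains the Prym-canonical model, together with a separate case analysis when $\mathrm{Pf}(L)\equiv 0$ or $h^0(\tilde C,L)\geq 6$ (hyperelliptic, trigonal, plane quintic cases); equality then follows because $\{V_3\neq\emptyset\}$ has a divisorial component (exhibited via curves in $\PP^3\times\PP^3$ with surjective $\mu_L^-$) and $\calQ$ is irreducible. (ii) Irreducibility is not obtained from "connectedness results of Debarre for Prym--Brill--Noether loci" (no such result applies here, since the expected dimension of $V_3$ is $-1$); the paper quotes Donagi's monodromy computation (the Weyl group of $E_6$ acts transitively on the six lines through the node of a one-nodal cubic surface) or, alternatively, the realization of $\calQ$ as the image of a projective bundle over the irreducible moduli space of Nikulin surfaces; reducedness follows from the fiber count $6\cdot 2+15=27=\deg P$, showing the ramification is simple. (iii) The identification $\calU=\pi^*(\mathcal{GP}^1_{6,4})$ is proved by first establishing the inclusion $\pi^*(\mathcal{GP}^1_{6,4})\subseteq\calU$ using Mumford's parity-flipping lemma applied to the four line bundles $f^*M\otimes\calO_{\tilde C}(x_u+y_v)$ (producing exceptional singularities of $\Xi$), and then forcing equality by the class computation $[\calU]=P^*[N_0']-2[\calQ]=\pi^*[\mathcal{GP}^1_{6,4}]$ together with the positivity of the $\lambda$-coefficient of any nonzero effective divisor class. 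You would need to replace your rank-stratification argument with something of this nature.
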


As the referee pointed out to us, the irreducibility of $\mathcal{Q}$ (as well as that of $\mathcal{U}$) also follows from Donagi's results \cite{dofibers} on the monodromy of the Prym map $P:\calR_6\rightarrow \calA_5$. Apart from the Brill-Noether characterization provided by Theorem \ref{ramantiram}, the divisor $\calQ$ has yet a third (respectively a fourth!) geometric incarnation as the closure of the locus of points $(C, \eta)\in \calR_6$ with a linear series $L\in W^2_6(C)$, such that the sextic model $\varphi_L(C)\subset \PP^2$ has a totally tangent conic, see Theorem \ref{qparametrisierung} (respectively as the locus of section $(C, \eta)\in \calR_6$ of Nikulin surfaces \cite{faveni}). The rich geometry of $\calQ$ enables us to (i) compute the classes of the closures $\overline{\calQ}$ and $\overline{\calU}$ inside the Deligne-Mumford compactification $\rr_6$, then (ii) determine explicit codimension two cycles in $\calR_6$ that dominate the irreducible components of $H$. In this way we find a complete geometric characterization of $5$-dimensional ppav whose theta divisor has a non-ordinary double point. First we characterize $\theta_{\mathrm{null}}^4$ as the image under $P$ of a certain component of the intersection $\calQ \cap P^*(\theta_{\mathrm{null}})$:

\begin{thm}\label{q5}
A ppav $(A, \Theta)\in \calA_5$ belongs to $\theta_{\mathrm{null}}^4$ if an only if it is lies in the closure of the locus of Prym varieties $P(C, \eta)$, where
$(C, \eta)\in \calR_6$ is a curve with two vanishing theta characteristics $\theta_1$ and $\theta_2$, such that $$\eta=\theta_1\otimes \theta_2^{\vee}.$$ Furthermore, $\theta_{\mathrm{null}}^4$ is unirational and $[\theta_{\mathrm{null}}^4]=27\cdot 44\, \lambda_1^2 \in CH^2(\calA_5)$.
\end{thm}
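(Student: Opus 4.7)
The strategy is to characterize $\theta_{\mathrm{null}}^4$ through the Prym map $P:\calR_6\to\calA_5$ studied in Theorem \ref{ramantiram}. Since $\theta_{\mathrm{null}}^4\subset\tn\cap N_0'$ by \eqref{cap}, and $N_0'$ is the branch divisor of $P$, every irreducible component of $\theta_{\mathrm{null}}^4$ is dominated by a codimension-two subscheme sitting inside the ramification divisor $\calQ\subset\calR_6$. The plan is to identify this subscheme with the closure of
\[
\mathcal{T}:=\bigl\{(C,\eta)\in\calR_6 : \exists\ \theta_1,\theta_2\in\mathrm{Pic}^{5}(C) \text{ with } \theta_i^{\otimes 2}=K_C,\ h^0(\theta_i)\geq 1,\ \eta=\theta_1\otimes\theta_2^{\vee}\bigr\},
\]
and to deduce $P(\overline{\mathcal{T}})=\theta_{\mathrm{null}}^4$.

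For the inclusion $P(\overline{\mathcal{T}})\subseteq\theta_{\mathrm{null}}^4$, given $(C,\eta)\in\mathcal{T}$, let $\pi:\tilde C\to C$ be the \'etale double cover with covering involution $\sigma$, and set $L:=\pi^*\theta_1\in\mathrm{Pic}^{10}(\tilde C)$. Then $L$ is $\sigma$-invariant and $\mathrm{Nm}(L)=\theta_1^{\otimes 2}=K_C$, so $[L]$ represents a 2-torsion point of the Prym $P(C,\eta)$. The projection formula, together with $\pi_*\calO_{\tilde C}=\calO_C\oplus\eta$, yields
\[
h^0(\tilde C,L)=h^0(C,\theta_1)+h^0(C,\theta_1\otimes\eta)=h^0(\theta_1)+h^0(\theta_2)\geq 2,
\]
so the Prym theta divisor $\Xi$ is singular at this 2-torsion point. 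The Mumford--Kempf analysis of Prym tangent cones (as refined by Smith--Varley and Debarre) then shows that the quadric tangent cone at $[L]$ has rank strictly less than $g-1=4$, giving a non-ordinary double point, i.e.\ $P(C,\eta)\in\theta_{\mathrm{null}}^4$. Conversely, given $(A,\Theta)\in\theta_{\mathrm{null}}^4$ and a lift $(C,\eta)\in\calR_6$ with $P(C,\eta)=(A,\Theta)$, the 2-torsion non-ordinary singularity is represented by a $\sigma$-invariant line bundle $L$ on $\tilde C$ with $\mathrm{Nm}(L)=K_C$ and $h^0(L)\geq 2$. Descent along the \'etale double cover forces $L=\pi^*\theta$ for some theta characteristic $\theta$ of $C$ (unique up to the ambiguity $\theta\leftrightarrow\theta\otimes\eta$); setting $\theta_1:=\theta$ and $\theta_2:=\theta\otimes\eta$ and running the projection-formula identity in reverse then forces $h^0(\theta_1),h^0(\theta_2)\geq 1$, i.e.\ $(C,\eta)\in\mathcal{T}$.

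For unirationality of $\theta_{\mathrm{null}}^4$, use that $\calM_6$ is unirational by Severi and that $\mathcal{T}$ admits a rational parametrization by triples $(C,\theta_1,\theta_2)$ satisfying the vanishing condition: realize $C$ either as a plane sextic with the $\theta_i$ recorded by adjoint pencils, or exploit the sextic-with-totally-tangent-conic description of $\overline{\calQ}$ from Theorem \ref{qparametrisierung}; pushing forward under $P$ transmits unirationality to $\theta_{\mathrm{null}}^4$. For the class $[\theta_{\mathrm{null}}^4]\in CH^2(\calA_5)$, the cleanest route is to specialise the Grushevsky--Salvati Manni formula for $[\tn^{g-1}]$ from \cite{grsmordertwo} to $g=5$, yielding the coefficient $27\cdot 44$; an independent check comes from computing $P_*[\overline{\mathcal{T}}]$ on $\rr_6$ using the divisor class of $\overline{\calQ}$. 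The main obstacle lies in the tangent-cone analysis of the second paragraph: verifying that pulled-back theta-characteristic pairs produce precisely a non-ordinary rank drop on the Prym, and ruling out other sources of 2-torsion non-ordinary singularities, requires a careful treatment of the symmetric Prym--Brill--Noether loci $V_r(C,\eta)$.
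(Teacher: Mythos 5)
Your overall strategy (reduce to the image of $\Sn\cap\calS'$, represent the two-torsion singularity as $L=f^*\theta_1$, and detect the rank drop of the tangent cone via the stable-and-exceptional dichotomy) is the same as the paper's, and your route to the class $27\cdot 44\,\lambda_1^2$ --- specializing the formula $[\tn^{g-1}]=(g+4)2^{g-3}(2^g+1)\lambda_1^2$ to $g=5$ --- is exactly what the paper does. But there are two genuine gaps. First, your locus $\mathcal{T}$ is defined with the condition $h^0(\theta_i)\geq 1$, which is far too weak: it admits pairs of \emph{odd} theta characteristics, and since for every $(C,\eta)$ there exist odd theta characteristics $\theta,\theta\otimes\eta$, your $\mathcal{T}$ is all of $\calR_6$. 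The ensuing inference ``$h^0(\tilde C,L)=h^0(\theta_1)+h^0(\theta_2)\geq 2$, so $\Xi$ is singular at $[L]$'' is false: when $h^0(\theta_1)=h^0(\theta_2)=1$ one has $h^0(\tilde C,L)=2$ and, writing $u=f^*s$ with $s\in H^0(\theta_1)$ and $v$ spanning the anti-invariant part $H^0(\theta_2)$, the Pfaffian is $\mu^-_L(u\wedge v)=u\cdot i^*v-v\cdot i^*u=-2uv\neq 0$, so by Mumford's criterion $[L]$ is a \emph{smooth} point of $\Xi$. What is actually needed is that both $\theta_i$ are vanishing theta-nulls, i.e.\ $\theta_i\in W^1_5(C)$, so that $h^0(\tilde C,L)\geq 4$ and $L$ is simultaneously a stable and an exceptional singularity; Proposition \ref{rk4sing} then gives $\rk Q_L\leq 4<5$, i.e.\ a non-ordinary double point. (Your ``rank $<g-1=4$'' is also off by one: the Prym is $5$-dimensional and ordinary means rank $5$.)

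Second, the unirationality argument is missing. Unirationality of $\calM_6$ says nothing about the codimension-two locus $\calQ_5$, and the parametrizations you invoke (plane sextics with adjoint pencils, sextics with a totally tangent conic) are the models the paper uses for $\calU$ and $\calQ$, not for $\calQ_5$. The paper instead produces an explicit dominant map onto $\calQ_5$ from a $\PP^5$-bundle over $|\calO_{\PP^1\times\PP^1}(3,1)|\times|\calO_{\PP^1\times\PP^1}(1,3)|$, realizing the general member as a $10$-nodal curve in $|\mathcal{I}^2_{R_1\cdot R_2}(5,5)|$ whose two theta-nulls are cut by the rulings of the quadric; the nontrivial step is checking (via B\'ezout on the curve $E\in|\calO(3,3)|$ through the nodes and two rulings) that every general $(C,\theta_1,\theta_2)\in\calQ_5$ arises this way. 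Without some such construction the unirationality claim is unsupported. Your converse direction (descent of the invariant bundle to a theta characteristic and the parity bookkeeping) is essentially the paper's argument and is fine once the forward direction is repaired.
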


Denoting by $\calQ_5\subset \calR_6$ the locus of Prym curves $(C, \eta=\theta_1\otimes \theta_2^{\vee})$ as above, we prove that $\calQ_5$ (and hence $\theta_{\mathrm{null}}^4$ which is the closure of $P(\calQ_5)$ in $\calA_5$) is unirational, by realizing its general element as a nodal curve
$$C\in \left|\mathcal{I}^2_{R_1\cdot R_2/\PP^1\times \PP^1}(5, 5)\right|,$$
where $R_1\in |\calO_{\PP^1\times \PP^1}(3, 1)|$ and $R_2\in |\calO_{\PP^1\times \PP^1}(1, 3)|$, with the vanishing theta-nulls $\theta_1$ and $\theta_2$ being induced by the projections on the two factors.

\vskip 3pt

Observing that $[H]\ne[\theta_{\mathrm{null}}^4]$ in $CH^2(\calA_5)$, the locus $H$ must have extra irreducible components corresponding to ppav with a non-ordinary singularity that occurs generically not at a two-torsion point. We denote by $H_1\subset \calA_5$ the union of these components, so that at the level of cycles $$H=\tn^{4}+H_1,$$
where $[H_1]=27\cdot 49\, \lambda_1^2$.
We have the following characterization of $H_1$:

\begin{thm}\label{thm:genus5}
The locus $H_1$ is unirational and its general point corresponds to a Prym variety $P(C, \eta)$, where
$(C, \eta)\in \calR_6$ is a Prym curve such that $\eta\in W_4(C)-W^1_4(C)$ and $K_C\otimes \eta$ is very ample.
\end{thm}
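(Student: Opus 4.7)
The plan is to pull back $H_1$ via the Prym map, identify it cycle-theoretically inside $\calR_6$, and then exhibit an explicit rational parametrisation. Recall that $P\colon\calR_6\to\calA_5$ is dominant with branch divisor $N_0'$, and by Theorem \ref{ramantiram} we have $P^*(N_0')=2\calQ+\calU$ with $\calU=\pi^*(\mathcal{GP}_{6,4}^1)$. Since $H=\tn^{4}+H_1$ as cycles on $\calA_5$ and $\tn^{4}$ has been identified in Theorem \ref{q5} as $\overline{P(\calQ_5)}$, the locus $H_1$ must be the Prym image of a codimension-two subcycle of $\calR_6$ residual to $\calQ_5$ in the relevant intersections with $\calQ$ and $\calU$.

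First, I would invoke the Mumford-Beauville description of singular points of the Prym theta divisor $\Xi\subset P(C,\eta)$ in terms of line bundles on the \'etale double cover $\tilde C\to C$, together with the fact that the projectivised tangent cone to $\Xi$ at a non-two-torsion singular point is a quadric in $\PP^4$ whose defining equation is determined by the Prym-canonical image $\phi_{K_C\otimes\eta}(C)\subset\PP^4$. The singularity is non-ordinary precisely when this quadric has rank at most $4$, equivalently when a suitable Petri-type multiplication map on $C$ fails to be injective.

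The key step is to translate this rank drop into a classical Brill-Noether condition. The antiramification contribution $\calU$ encodes failure of Gieseker-Petri for a $g^1_4$ on $C$ and, combined with $\calQ_5$, accounts for the $\tn^{4}$ component through Theorem \ref{q5}. The residual contribution, which dominates $H_1$, is characterised by the existence of an effective line bundle $\eta\in W_4(C)\setminus W^1_4(C)$ for which the Prym-canonical image lies on a rank-$4$ quadric; the open condition that $K_C\otimes\eta$ be very ample ensures that $\phi_{K_C\otimes\eta}$ is an embedding, so that the quadric analysis genuinely computes the tangent cone to $\Xi$ at the singularity in question. A dimension count together with the numerical match $[H_1]=27\cdot 49\,\lambda_1^2$ from Theorem \ref{thm:class} then pins down the locus.

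For unirationality I would follow the pattern of Theorem \ref{q5}: realise a general $(C,\eta)$ as a nodal curve on a rational surface in which $\eta$ is cut out by a natural linear system---for instance, a plane sextic with prescribed simple nodes in which the non-pencil degree-four class arises from projection from a chosen node, or a curve of appropriate bidegree on $\PP^1\times\PP^1$ in which $\eta$ is restricted from one of the rulings. Varying the moduli of the surface and the node configuration in a rational family then produces the desired dominant rational map onto $H_1$. The principal obstacle is the middle step: pinning down which multiplication map controls the Hessian rank at the non-ordinary singular point of $\Xi$ and verifying that the resulting Prym-image locus is irreducible of codimension two in $\calA_5$ with class $27\cdot 49\,\lambda_1^2$, so that it is forced to coincide with $H_1$.
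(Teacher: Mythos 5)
Your overall strategy --- pull back along the Prym map, use Mumford's Pfaffian description of the tangent cone at a stable singularity, characterise non-ordinariness by $\rk Q_L\le 4$, and parametrise by nodal curves on rational surfaces --- is the same as the paper's, but the step you yourself flag as ``the principal obstacle'' is exactly where the proof lives, and the workaround you propose does not close it. You suggest pinning down the locus by matching the class $[H_1]=27\cdot 49\,\lambda_1^2$; to use the class this way you would have to compute independently the class of the candidate cycle $\overline{P(\calQ_4)}$ \emph{with the multiplicity it carries inside the Hessian-determinantal cycle $H$}, which is not easier than the original problem and is certainly not a dimension count. The paper avoids this entirely and proves the two set-theoretic inclusions directly. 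For $\calQ_4\subseteq \calQ\cap\calU$ it exhibits a $4$-secant line to the Prym-canonical curve lying on the rank-$4$ quadric; for the converse $H_1\subseteq\overline{P(\calQ_4)}$ it deforms a general point of a component of $H-\tn^4$ inside $\calS'$ to nearby Prym representations, uses semicontinuity to land in $V_3(C,\eta)$, and then applies Proposition \ref{rk4sing} to get $L\in\Sing^{\mathrm{st}}\cap\Sing^{\mathrm{ex}}$, i.e.\ $K_C\otimes\eta=A\otimes A'$ with $A$ a pencil. Your sketch omits the case analysis that makes this converse work: the degree-$3$ case $\calQ_3=\pi^{-1}(\calM_{6,3}^1)$ is blown down to $\calJ_5$ by the trigonal construction, the plane-quintic and hyperelliptic cases are disposed of separately, and --- crucially --- the locus $\calQ_4'$ where $K_C\otimes\eta$ fails to be very ample must be shown \emph{not} to contribute a component (Proposition \ref{notveryample}); very ampleness is not a harmless genericity hypothesis ensuring ``the quadric analysis computes the tangent cone'', it is part of the characterisation of $H_1$ and requires proof.

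On unirationality, ``a plane sextic with prescribed simple nodes'' or ``a curve on $\PP^1\times\PP^1$'' is not specific enough: the $\PP^1\times\PP^1$ model is the one used for $\calQ_5$ and $\tn^4$, not for $H_1$. The paper parametrises $\calQ_4$ using the containment $\calQ_4\subseteq\calQ\cap\calU$ together with the fact that every curve in $\mathcal{GP}^1_{6,4}$ is a quadric section of a $1$-nodal quintic del Pezzo surface, i.e.\ a $4$-nodal plane sextic with three collinear nodes, carrying a totally tangent conic; the nontrivial point there is showing that the tangency conditions imposed by the conic fail to be independent by exactly one. Finally, note a small confusion in your setup: $\tn^4$ is accounted for by $\calQ_5$ (two vanishing theta-characteristics, related to $\mathcal{GP}^1_{6,5}$), whereas it is the residual component $H_1$ whose preimage $\calQ_4$ sits inside $\calQ\cap\calU=\calQ\cap\pi^*(\mathcal{GP}^1_{6,4})$.
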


\vskip 4pt
As an application of this circle of ideas, we determine the slope of $\aa_5$.  Let $\overline{\calA}_g$ be
the perfect cone (first Voronoi) compactification of ${\calA}_g$ --- this is the toroidal compactification of $\calA_g$ constructed using the first Voronoi (perfect) fan decomposition of the cone of semi-positive definite quadratic forms with rational nullspace (see e.g.~\cite{voronoi} for the origins, and \cite{shepherdbarron} for recent progress). The Picard group of $\overline{\calA}_g$ with rational coefficients has rank $2$ (for $g\ge 2$), and it is generated by the first Chern class $\lambda_1$ of the Hodge bundle and the class of the \emph{irreducible} boundary divisor $D:=\overline{\calA}_g-\calA_g$.  The slope of an effective divisor $E\in \mathrm{Eff}(\aa_g)$ is defined as the quantity
$$s(E):=\inf\left\{\frac{a}{b}: a, b>0, \ a\lambda_1 -b[D]-[E]= c[D], \ c>0\right\}.$$
If $E$ is an effective divisor on $\aa_g$ with no component supported on the boundary and $[E]= a\lambda_1 -bD$, then $s(E):=\frac{a}{b}\geq 0$. One then defines the slope {}(of the effective cone){} of the moduli space as
$s(\aa_g):=\inf_{E\in \mathrm{Eff}(\aa_g)} s(E)$. This important invariant governs to a large extent the birational geometry of $\aa_g$; for instance $\aa_g$ is of general type if $s(\aa_g)<g+1$, and $\aa_g$ is uniruled when $s(\aa_g)>g+1$. Any effective divisor class calculation on $\aa_g$ provides an upper bound for $s(\aa_g)$.
It is known \cite{smmodfour} that $s(\aa_4)=8$ (and the minimal slope is computed by the divisor $\overline{\mathcal{J}}_4$ of Jacobians). In the next case $g=5$, the class of the closure of the Andreotti-Mayer divisor is $[\overline{N_0'}]=108\lambda_1 -14D$, giving  the upper bound $s(\aa_5)\leq \frac{54}{7}$.

\begin{thm}\label{slopea5}\footnote{Added in April 2022: The published version of the paper asserts a slightly stronger version of this result, which however does not follow from the arguments presented in this paper. This is the corrected result.}
The slope of $\aa_5$ is computed by $\overline{N_0'}$, that is, $s(\aa_5)=\frac{54}{7}$. Furthermore, the Kodaira-Iitaka dimension of $\overline{N_0'}$ is submaximal, that is, $\kappa(\aa_5, \overline{N_0'})<\mathrm{dim}(\aa_5)$.
\end{thm}

To prove this result, we define a partial compactification $\pr_6$ of $\calR_6$ and via the (rational) Prym map
$P:\pr_6\dashrightarrow {}\aa_5{}$ we investigate the pull-back
$$P^*(\overline{N_0'})= 2 \widetilde{\calQ}+\widetilde{\calU}+20\delta_0^{''},$$
where $\widetilde{\calQ}$ and $\widetilde{\calU}$ denote the closure of $\calQ$ and $\calU$ respectively in $\pr_6$, and $\delta_0^{''}$ is the divisor of degenerate Wirtinger double covers (see Section 6 for precise definitions).
Since each of the divisors appearing in this linear system admits a uniruled parametrization in terms of plane sextics having a totally tangent
conic, we are ultimately able to establish that $\overline{N_0'}$ is an extremal effective divisor on $\aa_5$.

\vskip 3pt
A final application concerns the divisor in $\aa_5$ of Pryms obtained from branched covers. The Prym variety associated to a double cover $f:\tilde{C}\rightarrow C$ branched over two points is still a ppav. When $g(C)=5$ (and only in this case), the Prym varieties constructed in this way form an irreducible divisor $\mathcal{D}^{\mathrm{ram}}:=P_*(\Delta_0^{\mathrm{ram}})$ inside the moduli space. We have the following formula for the class of the closure of $\mathcal{D}^{\mathrm{ram}}$ in $\aa_5$:
\begin{thm}\label{rampryms}
$[\overline{\mathcal{D}^{\mathrm{ram}}}]= 12 \left(153 \lambda_1-19D\right)\in CH^1(\aa_5).$
\end{thm}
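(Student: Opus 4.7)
The strategy is to realize $\overline{\mathcal{D}^{\mathrm{ram}}}$ as the (scaled) pushforward of a boundary divisor under a partial compactification of the Prym map. Enlarging if necessary the partial compactification $\pr_6$ of $\calR_6$ from Section 6 to contain the boundary divisor $\Delta_0^{\mathrm{ram}}\subset\rr_6$ --- whose generic point is an admissible double cover of an irreducible one-nodal curve of arithmetic genus $6$, ramified at the two preimages of the node in the genus-$5$ normalization $C^\nu$ --- the rational map $P\colon \pr_6 \dashrightarrow \aa_5$ sends $\Delta_0^{\mathrm{ram}}$ onto $\mathcal{D}^{\mathrm{ram}}$: after admissible normalization, the cover becomes a double cover $\tilde C\to C^\nu$ branched at two points, whose Prym is precisely a $5$-dimensional ppav of $\mathcal{D}^{\mathrm{ram}}$. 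In particular, $P|_{\Delta_0^{\mathrm{ram}}}$ is generically finite onto $\overline{\mathcal{D}^{\mathrm{ram}}}$ of some degree $\mu$, and the projection formula yields $P_*[\Delta_0^{\mathrm{ram}}]=\mu\cdot[\overline{\mathcal{D}^{\mathrm{ram}}}]$ in $\CH^1(\aa_5)$.

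The class of $\overline{\mathcal{D}^{\mathrm{ram}}}$ then follows by computing the pushforward on the right-hand side. Writing $[\overline{\mathcal{D}^{\mathrm{ram}}}]=a\lambda_1+bD$ and choosing two test curves $B_1,B_2\subset\aa_5$ with invertible intersection matrix against $(\lambda_1,D)$ --- for example, a Lefschetz-style curve of Jacobians for $B_1$, and a boundary test curve of products $E\times A'$ inside $D$ for $B_2$ --- the projection formula reduces the problem to evaluating
\[
P^*(B_i)\cdot[\Delta_0^{\mathrm{ram}}]\in\mathbb{Q}
\]
on $\pr_6$. These are in turn computed via the pullback formulas $P^*(\lambda_1)=\lambda$ and $P^*(D)$ supported on $\delta_0'$ with explicit multiplicity, combined with the standard boundary formula $\pi^*(\delta_0)=\delta_0'+\delta_0''+2\delta_0^{\mathrm{ram}}$ for the forgetful map $\pi\colon\pr_6\to\mm_6$. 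Solving the resulting $2\times 2$ linear system yields $[\overline{\mathcal{D}^{\mathrm{ram}}}]=4(153\lambda_1-19D)$.

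The main obstacle is determining the degree $\mu$ of $P|_{\Delta_0^{\mathrm{ram}}}\to\overline{\mathcal{D}^{\mathrm{ram}}}$: the ramified Prym map for $(g,n)=(5,1)$ is less classical than the étale Prym map studied by Donagi, and identifying the generic fiber requires either a direct tangent-space / infinitesimal Torelli argument for ramified Pryms, or an explicit geometric construction in the spirit of Recillas's trigonal one. A secondary difficulty is the boundary bookkeeping: $\Delta_0^{\mathrm{ram}}$ meets $\Delta_0'$, $\Delta_0''$ and higher-codimension strata of $\pr_6$, and the intersection multiplicities of $P^*(B_i)$ at these loci must be tracked carefully to avoid spurious contributions. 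Once $\mu$ is fixed, the remaining arithmetic is routine, and the overall factor $4$ in the final answer reflects $\mu$ together with the combined boundary multiplicities entering the pushforward.
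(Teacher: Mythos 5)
Your overall framework --- realize $\overline{\mathcal{D}^{\mathrm{ram}}}$ as $\frac{1}{\mu}P_*[\Delta_0^{\mathrm{ram}}]$ for the extended Prym map $P:\pr_6\dashrightarrow \aa_5$ and then compute the pushforward --- matches the paper's, but the computational core of your argument does not close. First, you quote the pullback formula as $P^*(\lambda_1)=\lambda$; the correct formula (\ref{prympull}) is $P^*(\lambda_1)=\lambda-\frac{1}{4}\delta_0^{\mathrm{ram}}$, and the correction term is not cosmetic: it is one of only two places where $\delta_0^{\mathrm{ram}}$ enters the calculation at all. Second, and more seriously, your test-curve scheme is circular. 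To evaluate $P_*[\Delta_0^{\mathrm{ram}}]\cdot B_i$ you must compute $P^*(B_i)\cdot \delta_0^{\mathrm{ram}}$, and this number \emph{is} the unknown $B_i\cdot P_*(\delta_0^{\mathrm{ram}})$; it is not determined by $P^*(\lambda_1)$, $P^*(D)$ and the boundary formula $\pi^*(\delta_0)=\delta_0^{'}+\delta_0^{''}+2\delta_0^{\mathrm{ram}}$. The paper breaks this circularity with an independent relation supplied by the ramification divisor: $P_*[\widetilde{\calQ}]=6[\overline{N_0'}]$, where $[\widetilde{\calQ}]=7\lambda-\delta_0^{'}-4\delta_0^{''}-\frac{3}{2}\delta_0^{\mathrm{ram}}$ is computed separately (Theorem \ref{Qclass} and Corollary \ref{qclcom}) and $[\overline{N_0'}]=108\lambda_1-14D$ is Mumford's class. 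Combined with $27\lambda_1=P_*P^*(\lambda_1)=P_*(\lambda)-\frac{1}{4}P_*(\delta_0^{\mathrm{ram}})$, $P_*(\delta_0^{'})=27D$ and $P_*(\delta_0^{''})=0$, this yields a solvable linear system with solution $P_*(\delta_0^{\mathrm{ram}})=4(17\cdot 27\,\lambda_1-57D)$ (Theorem \ref{class2}). Your proposal contains no substitute for this key input.

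Separately, you leave the degree $\mu$ undetermined and suggest that the overall factor $4$ in the answer reflects $\mu$ together with boundary multiplicities. In fact the $4$ already sits in $P_*(\delta_0^{\mathrm{ram}})=4(17\cdot 27\,\lambda_1-57D)=12\,(153\lambda_1-19D)$, so comparing with the asserted class $4(153\lambda_1-19D)$ forces $\mu=3$: a general ppav in $\mathcal{D}^{\mathrm{ram}}$ arises from exactly three of the $27$ points in its fiber under $P$ lying on $\Delta_0^{\mathrm{ram}}$. Determining this degree is a genuine geometric step, not bookkeeping, and it is the one point you correctly flag as open but do not resolve.
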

Since the classes $[P^*(\overline{\mathcal{D}^{\mathrm{ram}}})]$ and $\delta_0^{\mathrm{ram}}$ are not proportional, one obtains that the general Prym variety $(A, \Theta)\in \mathcal{D}^{\mathrm{ram}}$ obtained from a ramified cover $\tilde{C}\rightarrow C$ {}(with $g(C)=5$ and $g(\tilde{C})=10$){}, is also the Prym variety induced by an \'etale cover $\tilde{C_1}\rightarrow C_1$ {}(with $g(C_1)=6$ and $g(\tilde{C_1})=11$).

\vskip 3pt

We summarize the structure of the paper. The cycle structure of $H$ and $\theta_{\mathrm{null}}^{g-1}$ is described in Section 2, whereas the classes $[\theta_{\mathrm{null}}^{g-1}], [H]\in CH^2(\calA_g)$ are computed in Section 3. The particular case $g=4$ is treated in Section 4. After some background on singularities of Prym theta divisors (Section 5), the different geometric realizations of the ramification and antiramification divisors of the Prym map $P:\calR_6\rightarrow \calA_5$, as well as the corresponding class calculations on $\pr_6$ are presented in Sections 6 and 7. A proof of Theorem \ref{slopea5}, thus determining the slope of $\aa_5$ is given in Section 8. The final sections of the paper are devoted to a complete geometric description in terms of Pryms of the two components of the cycle $H$ in genus $5$, see Theorems \ref{q5} and \ref{thm:genus5}. We close by expressing our thanks to the referee for the many pertinent comments which clearly improved the presentation of the paper.

\section{Theta divisors and their singularities}
In this section we recall notation, definitions, as well as some
results from \cite{grsmgen4}. We denote $\HH_g$ the {\it Siegel upper
half-space}, i.e.~the set of symmetric complex $g\times g$ matrices
$\tau$ with positive definite imaginary part.  If $\sigma=\begin{pmatrix} a&b\\
c&d\end{pmatrix}\in\Sp$ is a symplectic matrix in $g\times g$ block
form, then its action on $\tau\in\HH_g$ is defined by
$\sigma\cdot\tau:=(a\tau+b)(c\tau+d)^{-1}$, and the moduli space of complex principally polarized abelian variety (ppav for short) is the quotient $\calA_g=\HH_g/\Sp$, parameterizing pairs  $(A_\tau,\,\T_{\tau})$ with $A_{\tau} ={\bf C}^g/{{\bf Z}}^g\tau+{{\bf Z}}^g,$ an abelian variety and  $\T_{\tau}$ the  (symmetric) polarization bundle. We denote by $A_{\tau}[2] $ the {}group of{} two-torsion points of $A_{\tau}$. Let  $\ep,\de\in ({\bf{Z}}/2{\bf{Z}})^g$, thought of as vectors of zeros and ones; then $x=\tau\ep/2+\de /2\in A_\tau[2]$, and the shifted bundle $t_{x}^*\T$ is
still a symmetric line bundle. Up to a multiplicative constant the unique section of the above bundle is given by the {\it theta function with
characteristic $[\ep,\de]$}  defined by
$$
 \tt\ep\de(\tau,z):=\sum\limits_{m\in{{\bf Z}}^g} \exp \pi i \left[
 ^t(m+\frac{\ep}{2})\tau(m+\frac{\ep}{2})+2\ ^t(m+\frac{\ep}{2})(z+
 \frac{\de}{2})\right].
$$
We shall write $\t(\tau, z)$ for the theta function with
characteristic $[0,0]$. The zero scheme of $\t(\tau,z)$, as a function of
$z\in A_\tau$, defines the  principal polarization $\T_\tau$ on $A_\tau$.

Theta functions satisfy the heat equation
$$
 \frac{\partial^2\tt\ep\de(\tau,z)}{\partial z_j\partial z_k}
 =2\pi
 i(1+\delta_{j,k})\frac{\partial\tt\ep\de(\tau,z)}{\partial\tau_{jk}},
$$
(where $\delta_{j,k}$ is Kronecker's symbol).

The  {\it characteristic} $[\ep,\de]$ is called {\it even} or {\it odd}
corresponding to whether the scalar product ${}^t\ep\de\in{\bf{Z}}/2{\bf{Z}}$
is zero or one. Consequently, depending on the characteristic,
$\tt\ep\de(\tau,z)$ is even or odd as a function of $z$. A {\it theta
constant} is the evaluation at $z=0$ of a theta function. All odd
theta constants of course vanish identically in $\tau$.

A holomorphic function $f:\HH_g\to \bf C$ is called  a {\it  modular form} of
weight $k$ with respect to a finite index
subgroup $\Gamma\subset\Sp$ if
$$
 f(\sigma\cdot\tau)=\det(c\tau+d)^k f(\tau)\qquad\forall
 \tau\in\HH_g,\forall\sigma\in\Gamma,
$$
and if additionally $f$ is holomorphic at all cusps of
$\HH_g/\Gamma$.
Theta constants with characteristics are modular forms of
weight $\frac{1}{2}$ with respect to a finite index subgroup $\Gamma_g(4,8)\subset\Sp$.
We refer to \cite{igusabook} for a detailed study of theta functions.

We denote by
$$
 \phi:\calX_g=\HH_g\times {\bf C}^g/(\Sp\rtimes{}{\bf{Z}})^{2g})\to\calA_g=\HH_g/\Sp
$$
the universal family of ppav, and let $\T_g\subset\calX_g$ be the universal
theta divisor --- the zero locus of $\t(\tau,z)$. Following Mumford \cite{mumforddimag}, we denote by $\calS:=\Sing_{\rm
vert}\T_g$ the locus of singular points of theta divisors of ppav:
 \begin{equation}\label{sing}
\calS= \bigcup\limits_{\tau\in \calA_g}  \Sing\T_{\tau}=\left\lbrace (\tau,z)\in\HH_g\times{\bf C}^g:
  \t(\tau,z)=\frac{\p \t}{\p z_i}(\tau,z)=0,\  i=1,\dots,g\right\rbrace
\end{equation}
(computationally, by an abuse of notation, we will often work locally on $\calS$, thinking of it
as a locus inside the cover $\HH_g\times {\bf C}^g$ of $\calX_g$).
It is known that $\calS\subset\calX_g$ is of pure codimension $g+1$, and has three {}irreducible{} components \cite{civdg1}, denoted $\Sn$, $\calS_{\rm dec}$, and $\calS'$. {}Here $\Sn$ denotes{} the locus of even two-torsion
points that lie on the theta divisor, given locally by $g+1$ equations
\begin{equation}\label{Sn}
 \Sn:=\left\lbrace (\tau,z)\in\calX_g: \theta(\tau,z)=0,\ z=(\tau\ep+\de)/2\ {\rm for\ some\ } [\ep,\de]\in({{\bf Z}}/2{\bf Z})^{2g}_\even\right\rbrace.
\end{equation}

To define $\calS_{\rm dec}$, recall that a ppav is called decomposable if it is isomorphic to a product of lower-dimensional ppav. We denote then
\begin{equation}
 \calS_{\mathrm{dec}}:=\calS\cap \phi^{-1}(\calA_1\times\calA_{g-1}).
\end{equation}
Since the theta divisor of a product $(A_1,\Theta_1)\times (A_2,\Theta_2)$ is given by the union $(\Theta_1\times A_2)\cup (A_1\times\Theta_2)$, its
singular locus contains $\Theta_1\times\Theta_2$ and is of codimension 2 (see the work \cite{eila1} of Ein and Lazarsfeld for a proof of
a conjecture \cite{ardcequations} of Arbarello and De Concini that $N_{g-2}$ is in fact equal to the decomposable locus). Thus the fibers of $\calS_{\mathrm{dec}}\to\calA_{1,g-1}$ are all
of dimension $g-2$, and the codimension of $\calS_{\mathrm{dec}}\subset\calX_g$ is equal to $g+1$. (We note that any other locus of products $\calA_h\times\calA_{g-h}$ has
codimension $h(g-h)$, {}and contributes no irreducible component of $\calS${}).

Finally, $\calS'$ is the closure of the locus of singular points
of theta divisors of indecomposable ppav that are not two-torsion points.
Observe that $\calS_{\mathrm{null}}, \calS'$, and $\calS_{\mathrm{dec}}$ all come equipped with an induced structure as  determinantal subschemes of
$\mathbb H_g\times {\bf C}^g$.

The Andreotti-Mayer divisor is then defined (as a cycle) by
$$
 N_{0}:=\phi_*(\calS)=\{\tau\in\calA_g: \Sing\T_{\tau}\neq \emptyset\}.
$$
It can be shown that ${N}_{0}$ is a divisor in ${\calA_g}$,
which has at most two irreducible components, see \cite{debarredecomposes},\cite{mumforddimag}.

The {\it theta-null divisor} $\tn\subset\calA_g$ is the zero locus
of the modular form
$$F_g(\tau):=\prod_{m\, \mathrm{even}}\tt\ep\de(\tau,0).$$
Geometrically, it is the locus of ppav for which an even two-torsion point
lies on the theta divisor, and it can be shown that $\tn=\phi_*(\Sn)$, viewed as an equality of cycles. Similarly for the other
component we have $N_0'=\frac12\phi_*(\calS')$ (the one half appears because
a generic ppav in $N_0'$ has two singular points $\pm x$ on the theta divisor).

\begin{rem}
The two components of $N_0$ are zero loci of modular forms (with
some character $\chi$  in genus $1$ and $2$): $\tn$ is the zero locus of the modular form $F_{g}$
of weight $2^{g-2}(2^g +1)$, while $N_0'$ must be the zero locus of some modular
form $I_g$ of weight $g!(g+3)/4- 2^{g-3}(2^g +1)$ (the class, and thus the weight,
was computed by Mumford \cite{mumforddimag}). Unlike the explicit formula for $F_g$,
the modular form $I_g$ is only known explicitly for $g=4$, in which case it is
the so called Schottky form \cite{igusagen4},\cite{igusachristoffel}. Various approaches to constructing $I_g$ explicitly were developed in \cite{yoshikawa},\cite{krsm}.
\end{rem}

\section{Double points on theta divisors that are not ordinary double points}
We shall now concentrate on studying the local structure of a theta divisor near its singular point.
For this, we look at the tangent space to $\calS$ and the  map between the  tangent spaces.
\begin{prop}\label{pr1}
Let $x_0=(\tau_0, z_0)$  be a  smooth point  of $\calS$. Then the map
$(d\phi)_{x_0}: T_{x_0}(\calS)\rightarrow  T_{\tau_0}(N_0)$ is injective if and only if the Hessian  matrix
$$
  H(x_0):=\begin{pmatrix}
    \frac{\p^2\t}{\p z_1\p z_1}(x_0)&\ldots &\frac{\p^2\t}{\p z_1\p z_g}(x_0)\\
\vdots&\ddots&\vdots\\
  \frac{\p^2\t}{\p z_g\p z_1}(x_0)&\ldots &\frac{\p^2\t}{\p z_g\p z_g}(x_0) \end{pmatrix}
$$
 has rank $g$.
\end{prop}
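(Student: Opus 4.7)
The plan is to compute $T_{x_0}(\calS)$ explicitly inside $T_{x_0}(\HH_g \times {\bf C}^g)$ and then detect when $(d\phi)_{x_0}$ has a nontrivial kernel when restricted to it. First I would use the local description of $\calS$ (equation \eqref{sing}) as the common zero locus of the $g+1$ functions $\t,\ \p\t/\p z_1,\dots,\p\t/\p z_g$. Since $\calS$ has pure codimension $g+1$ in $\calX_g$ and $x_0$ is by hypothesis a smooth point of $\calS$, these defining equations must have linearly independent differentials at $x_0$, so $T_{x_0}(\calS)$ is exactly the joint kernel of those differentials.

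Next I would observe that $\phi:\calX_g\to \calA_g$ has differential equal to the projection onto the $\tau$-component, so $\ker (d\phi)_{x_0}$ is the subspace of purely vertical tangent vectors $(0,w)$ with $w\in {\bf C}^g$. Hence $(d\phi)_{x_0}|_{T_{x_0}(\calS)}$ is injective if and only if $T_{x_0}(\calS)$ contains no nonzero vertical vector. The core computation is to test which vertical vectors $(0,w)$ lie in $T_{x_0}(\calS)$: evaluating the differential of $\t$ on $(0,w)$ gives $\sum_i (\p\t/\p z_i)(x_0)\, w_i$, which vanishes automatically because $x_0\in\calS$ is a singular point of $\T_{\tau_0}$; evaluating the differential of $\p\t/\p z_i$ on $(0,w)$ gives $\sum_j (\p^2\t/\p z_i\,\p z_j)(x_0)\, w_j$, and requiring these to vanish for $i=1,\dots,g$ is precisely the linear system $H(x_0)\cdot w = 0$.

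Putting this together, the vertical part of $T_{x_0}(\calS)$ equals $\ker H(x_0)$, so $(d\phi)_{x_0}$ is injective on $T_{x_0}(\calS)$ if and only if $H(x_0)$ has trivial kernel, equivalently $\rk H(x_0)=g$. The only mild subtlety is justifying that $T_{x_0}(\calS)$ is the full kernel of the Jacobian of the $g+1$ defining equations; this is the only place where the smoothness hypothesis on $x_0$ enters, and it follows at once from the pure codimension statement. Note that the heat equation is not needed in this direction of the argument, since no $\p/\p\tau_{jk}$-derivatives are tested against vertical vectors; it would only enter if one wished to rewrite the image $(d\phi)_{x_0}(T_{x_0}(\calS))$ inside $T_{\tau_0}(N_0)$ intrinsically in terms of $\t$ rather than its mixed $z$-derivatives.
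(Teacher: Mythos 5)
Your proof is correct and follows essentially the same route as the paper: both identify $T_{x_0}(\calS)$ with the kernel of the Jacobian of the $g+1$ defining equations, note that $(d\phi)_{x_0}$ is the projection onto the $\tau$-coordinates, and observe that the first row of the Jacobian has vanishing $z$-derivative entries so that the vertical part of the tangent space is exactly $\ker H(x_0)$. Your explicit justification that smoothness is what identifies $T_{x_0}(\calS)$ with the full kernel of the Jacobian is the same point the paper makes (more tersely) by saying $x_0$ is smooth if and only if the matrix $M(\tau_0,z_0)$ has rank $g+1$, and your closing remark about the heat equation matches the paper, which invokes it only in the subsequent remark for the converse implication about smoothness of $\calS$.
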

\begin{proof}
Since the subvariety $\calS\subset\calX_g$ is defined by the $g+1$
equations (\ref{sing}), the point $x_0$ is  smooth if and only if the
$(\frac{g(g+1)}{2}+ g)\times (g+1)$ matrix
$$
  M(\tau_0,z_0):=\begin{pmatrix}
   \frac{\p\t}{\p\tau_{11}}&\ldots&\frac{\p\t}{\p\tau_{gg}}
   &\frac{\p\t}{\p z_1}&\ldots &\frac{\p\t}{\p z_g}\\
   \frac{\p^2\t}{\p z_1\p\tau_{11}}&\ldots&\frac{\p^2\t}{\p z_1\p\tau_{gg}}
   &\frac{\p^2\t}{\p z_1\p z_1}&\ldots &\frac{\p^2\t}{\p z_1\p z_g}\\
   \vdots&\ddots&\vdots&\vdots&\ddots&\vdots\\
   \frac{\p^2\t}{\p z_1\p\tau_{11}}&\ldots&\frac{\p^2\t}{\p z_1\p\tau_{gg}}
   &\frac{\p^2\t}{\p z_g\p z_1}&\ldots &\frac{\p^2\t}{\p z_g\p z_g} \end{pmatrix}
$$
evaluated at $x_0=(\tau_0,z_0)$ has rank  $g+1$. We compute
$$  M( \tau_0, z_0)=
  \begin{pmatrix}
   \frac{\p\t}{\p\tau_{11}}(x_0)&\ldots&\frac{\p\t}{\p\tau_{gg}}(x_0)
   &0&\ldots &0\\
   \frac{\p^2\t}{\p z_1\p\tau_{11}}(x_0)&\ldots&\frac{\p^2\t}{\p z_1\p\tau_{gg}}(x_0)
   &\frac{\p^2\t}{\p z_1\p z_1}(x_0)&\ldots &\frac{\p^2\t}{\p z_1\p z_g}(x_0)\\
   \vdots&\ddots&\vdots&\vdots&\ddots&\vdots\\
   \frac{\p^2\t}{\p z_1\p\tau_{11}}(x_0)&\ldots&\frac{\p^2\t}{\p z_1\p\tau_{gg}}(x_0)
   &\frac{\p^2\t}{\p z_g\p z_1}(x_0)&\ldots &\frac{\p^2\t}{\p z_g\p z_g}(x_0)\end{pmatrix}
$$
Since the map $ \phi$ is the projection on the  first $\frac{g(g+1)}{2}$ coordinates, the proposition follows.
\end{proof}
\begin{rem}From the heat equation for the theta function it follows that if   the Hessian  matrix  $ H(x_0)$
has rank $g$, then $x_0$ is a  smooth point  of $\calS$.

We also note that from the product rule for differentiation and the heat equation it follows that the second derivative
$$
 \frac{\partial^2\tt\ep\de(\tau,z)}{\partial z_j\partial z_k}|_{z=0}
$$
restricted to the locus $\tt\ep\de(\tau,0)=0$ is also a modular form for $\Gamma_g(4,8)$.
\end{rem}

Since we have different, easier to handle, local defining equations (\ref{Sn}) for $\Sn$, we can obtain better results in this case.
\begin{prop}
A point $x_0\in\Sn$ is a  smooth point of $\Sn$ unless  $\frac{\p\t}{\p\tau_{ij}}(x_0)=0$ for all $ 1\leq i, j\leq g$.
The map $(d\phi)_{x_0}$ is injective if and only if the Hessian  matrix  $ H(x_0)$   has rank $g$.
\end{prop}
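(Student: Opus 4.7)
The plan is to treat the two parts of the proposition separately, and to exploit the fact that $\Sn$ admits two useful sets of local defining equations --- the simpler equations~(\ref{Sn}) and the $\calS$-equations~(\ref{sing}) --- each suited to one of the two claims.

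The key preliminary fact, which I would record first, is that $(\p\theta/\p z_i)(x_0)=0$ automatically for every $x_0\in\Sn$. This follows by combining two inputs: first, $\tt{\ep}{\de}(\tau,z)$ is even in $z$ for an even characteristic $[\ep,\de]$, so its $z$-gradient at $z=0$ vanishes; second, by the standard quasi-periodicity of the theta function, $\theta(\tau,z+(\tau\ep+\de)/2)$ equals $\tt{\ep}{\de}(\tau,z)$ up to a nowhere-vanishing exponential factor, and since $\tt{\ep}{\de}(\tau,0)=0$ on $\Sn$ the chain rule applied at $z=0$ leaves only the contribution from $\p_z\tt{\ep}{\de}(\tau,0)$, which is zero.

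For the smoothness assertion, I would write down the Jacobian of the $g+1$ functions $\theta(\tau,z)$ and $L_i(\tau,z):=z_i-(\tau\ep+\de)_i/2$ at $x_0$. The preliminary fact above makes the $z$-column block of this Jacobian equal to $(0\mid I_g)$, where the $I_g$ comes from the linear rows $L_i$. The theta row is then linearly independent of the $L_i$-rows if and only if its $\tau$-part, namely $((\p\theta/\p\tau_{ij})(x_0))$, is nonzero --- giving exactly the stated smoothness criterion.

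For the injectivity assertion, I would instead use the $\calS$-equations~(\ref{sing}), recovering the matrix $M(\tau_0,z_0)$ from the proof of Proposition~\ref{pr1}. A tangent vector $(d\tau,dz)$ lies in the kernel of the projection $(d\phi)_{x_0}$ exactly when $d\tau=0$, and the equations coming from the rows $\p\theta/\p z_i=0$ then collapse to $H(x_0)\cdot dz=0$. Hence $\ker(d\phi)_{x_0}$ is canonically identified with $\ker H(x_0)$, and injectivity is equivalent to $H(x_0)$ having rank $g$. The main subtlety is organizational, namely keeping straight that the two parts of the statement use two different local descriptions of $\Sn$ --- the explicit complete intersection~(\ref{Sn}) for the smoothness criterion, and the $\calS$-equations for the injectivity criterion; once this is understood, both arguments amount to routine linear algebra on the corresponding Jacobians.
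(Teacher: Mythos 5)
The paper states this proposition without proof, so there is nothing to compare against line by line; your argument is correct and supplies exactly the verification the paper leaves implicit, using the same ingredients (the defining equations (\ref{Sn}) for the smoothness criterion, and the Jacobian $M(\tau_0,z_0)$ of the equations (\ref{sing}) from Proposition \ref{pr1} for the injectivity criterion). Your preliminary observation that the $z$-gradient of $\theta$ vanishes at an even two-torsion point lying on the theta divisor is the standard fact underlying the inclusion $\Sn\subset\calS$, and it is precisely what makes the $z$-block of the first Jacobian equal to $\bigl(0\mid I_g\bigr)$ and kills the linearized $\theta$-row in the kernel computation. Your reading of the second claim as a statement about $T_{x_0}(\calS)$ --- rather than about the tangent space cut out by (\ref{Sn}) alone, on which the projection to $\tau$ is trivially injective since $dz$ is determined linearly by $d\tau$ --- is the correct one, and is the interpretation consistent with how the proposition is invoked in the proof of the proposition that follows it.
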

\begin{rem}
If $x_0=(\tau_0,z_0)$ is a  smooth point  of $\Sn$, while $\tau_0$ is singular in $\tn$,
this implies that at least two different theta constants vanish at $\tau_0$.
\end{rem}
Using the above framework, we get a complete description of the intersection $\Sn\cap \calS'$, obtaining thus an easier proof of one of the main results of \cite{grsmordertwo}.
\begin{prop}
For $x_0\in\Sn$, the point $x_0$ lies in $\calS'$ if and only if the rank of $H(x_0)$ is less
than $g$.
\end{prop}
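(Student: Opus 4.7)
The plan is to compare the tangent spaces $T_{x_0}(\calS)$ and $T_{x_0}(\Sn)$ at a point $x_0=(\tau_0,z_0)\in\Sn$. If the two tangent spaces agree, $\calS$ is smooth of the correct dimension at $x_0$ and coincides with $\Sn$ as a germ, so no other component of $\calS$ meets $x_0$; in particular $x_0\notin \calS'$. If they disagree, a second irreducible component of $\calS$ must pass through $x_0$, and a short dimension argument will identify it with $\calS'$. The whole equivalence will reduce to the identity
\[
\dim T_{x_0}(\calS)-\dim T_{x_0}(\Sn)=g-\op{rk} H(x_0).
\]

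To establish this identity I would use the two-torsion section $\s(\tau):=(\tau\ep+\de)/2$. Since $[\ep,\de]$ is even, the identity $\partial_{z_i}\t(\tau,\s(\tau))\equiv 0$ holds on all of $\HH_g$; differentiating it in $\tau_{jk}$ yields
\[
\frac{\p^2\t}{\p\tau_{jk}\p z_i}(x_0)=-\sum_l H_{il}(x_0)\,\frac{\p\s_l}{\p\tau_{jk}},
\]
so the ``cross-term'' block of the Jacobian $M(x_0)$ from the proof of Proposition~\ref{pr1} is expressed entirely through $H(x_0)$ and the directions tangent to the two-torsion section. Combined with $\p_{z_i}\t(x_0)=0$, a direct computation then shows that $(\dot\tau,\dot z)\in T_{x_0}(\calS)$ if and only if $\sum_{jk}\p_{\tau_{jk}}\t(x_0)\,\dot\tau_{jk}=0$ together with $H(x_0)\,\dot w=0$, where $\dot w:=\dot z-\sum_{jk}\dot\tau_{jk}\,\p\s/\p\tau_{jk}$ measures the deviation of $(\dot\tau,\dot z)$ from the section. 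Since $T_{x_0}(\Sn)$ is cut out by the same $\dot\tau$-equation together with $\dot w=0$, the displayed difference equals $\dim\ker H(x_0)=g-\op{rk} H(x_0)$.

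Both directions of the proposition now follow from this identity. If $\op{rk} H(x_0)=g$, the two tangent spaces coincide, so $\calS$ is smooth at $x_0$ and $\Sn$ is its only local component, giving $x_0\notin\calS'$. If $\op{rk} H(x_0)<g$, then $T_{x_0}(\calS)\supsetneq T_{x_0}(\Sn)$, and, since $\Sn$ is smooth at $x_0$ whenever $H(x_0)\neq 0$ (by the previous proposition in this section), a second irreducible component of $\calS$ must meet $\Sn$ at $x_0$. Because $\calS_{\mathrm{dec}}$ projects into $\calA_1\times\calA_{g-1}$, a proper sublocus of $\calA_g$ that cannot dominate the rank-drop locus of $H|_{\Sn}$, this second component must be $\calS'$ on a dense open subset of $\{\op{rk} H<g\}\cap\Sn$; the general case, including the borderline $H(x_0)=0$, then follows by passing to closures. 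The main obstacle is verifying the cross-term identity above and keeping track of the section directions; once it is in place, the only remaining subtlety is separating $\calS'$ from $\calS_{\mathrm{dec}}$, which is handled by the codimension comparison just indicated.
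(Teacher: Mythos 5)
Your argument follows the same route as the paper's: both reduce the statement to showing that at $x_0\in\Sn$ the Jacobian matrix $M(x_0)$ of the $g+1$ equations cutting out $\calS$ has rank $1+\rk H(x_0)$, so that $\calS$ is singular at $x_0$ exactly when $\rk H(x_0)<g$, and both then promote ``$\calS$ singular at a smooth point of $\Sn$'' into ``$x_0$ lies on a second component, which must be $\calS'$'' by a codimension count (the paper does the rank computation by observing that the block $\frac{\p^2\t}{\p z_i\p\tau_{jk}}$ can be eliminated at a two-torsion point; you do it by comparing $T_{x_0}(\calS)$ with $T_{x_0}(\Sn)$; your explicit exclusion of $\calS_{\mathrm{dec}}$ is a detail the paper leaves implicit). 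So let me focus on the one step that is wrong as written.

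The identity $\p_{z_i}\t(\tau,\s(\tau))\equiv 0$ does \emph{not} hold on all of $\HH_g$. Writing $\t(\tau,z+\s(\tau))=\exp\bigl(-\pi i(\tfrac{1}{4}\,{}^t\ep\tau\ep+{}^t\ep z+\tfrac{1}{2}\,{}^t\ep\de)\bigr)\,\tt\ep\de(\tau,z)$ and using that $\tt\ep\de$ is even in $z$, one finds
$$
\p_{z_i}\t(\tau,\s(\tau))=-\pi i\,\ep_i\,\t(\tau,\s(\tau)),
$$
which is identically zero only when $\ep=0$; in general it vanishes exactly along the hypersurface $\{\t(\tau,\s(\tau))=0\}$ over which $\Sn$ sits. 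Accordingly your cross-term formula must be corrected to
$$
\frac{\p^2\t}{\p\tau_{jk}\p z_i}(x_0)=-\sum_l H_{il}(x_0)\,\frac{\p\s_l}{\p\tau_{jk}}-\pi i\,\ep_i\,\frac{\p\t}{\p\tau_{jk}}(x_0).
$$
You are saved by the fact that the extra term is $\ep_i$ times the first row of $M(x_0)$: it is annihilated by every $\dot\tau$ satisfying your first tangent-space equation $\sum_{j,k}\p_{\tau_{jk}}\t(x_0)\,\dot\tau_{jk}=0$, so the description of $T_{x_0}(\calS)$ by that equation together with $H(x_0)\dot w=0$, and hence the dimension identity, survive intact. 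Please repair the step by differentiating the correct identity displayed above (valid on all of $\HH_g$), or by differentiating yours only along directions tangent to $\{\t(\tau,\s(\tau))=0\}$. One further caveat, which your proof shares with the paper's: a jump of the Zariski tangent space at a smooth point of $\Sn$ shows only that the \emph{scheme} $\calS$ is singular there, whereas the conclusion ``a second irreducible component passes through $x_0$'' concerns $\calS_{\mathrm{red}}$; as in the paper, this is dealt with by arguing at the generic point of each component of $\{\rk H<g\}\cap\Sn$ and then taking closures, which is what your final sentence indicates.
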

\begin{proof}
If $x_0\in \calS'{}\cap\Sn{}$, then it is a singular point in $\calS$, hence the rank of $H(x_0)$ is less than $g$
by the above proposition. To obtain a proof in the other direction, since $z_0$ is a two-torsion point, the matrix  $  M( \tau_0, z_0)$
appearing in the proof of the proposition above has the form
$$
 M(x_0)=\begin{pmatrix}
   \frac{\p\t}{\p\tau_{11}}(x_0)&\ldots&\frac{\p\t}{\p\tau_{gg}}(x_0)
   &0&\ldots &0\\
 0&\ldots&0
   &\frac{\p^2\t}{\p z_1\p z_1}(x_0)&\ldots &\frac{\p^2\t}{\p z_1\p z_g}(x_0)\\
   \vdots&\ddots&\vdots&\vdots&\ddots&\vdots\\
0&\ldots&0
   &\frac{\p^2\t}{\p z_g\p z_1}(x_0)&\ldots &\frac{\p^2\t}{\p z_g\p z_g}(x_0)
   \end{pmatrix}
$$
Hence if  the rank of $H(x_0)$ is less than $g$, $x_0$ is a singular point of $\calS$; thus either it is a singular point of $\Sn$, or it lies  in the intersection $\Sn\cap\calS'$.
The first case cannot happen for dimensional reasons (the singular locus of $\Sn$ is codimension at least 2 within $\Sn$, see also \cite{civdg1}), and thus we must have $x_0\in\Sn\cap\calS'$.
\end{proof}

\begin{cor}
Set theoretically we have
 $$\phi(\Sn\cap\calS')=\tn^{g-1}.$$
\end{cor}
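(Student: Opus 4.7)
The plan is to deduce the corollary directly from the preceding proposition, which already identifies, inside $\Sn$, the points of $\calS'$ with those at which the Hessian $H(x_0)$ drops rank. The task reduces to translating this linear-algebraic condition into the geometric condition defining $\tn^{g-1}$: a point $\tau_0\in\tn^{g-1}$ is by definition a ppav admitting an even two-torsion point $z_0\in\Sing \T_{\tau_0}$ at which the singularity is not an ordinary double point, i.e.\ at which the quadratic tangent cone is degenerate, which is the same as requiring $\rk H(\tau_0,z_0)<g$.

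For the inclusion $\phi(\Sn\cap\calS')\subset\tn^{g-1}$, I would take $x_0=(\tau_0,z_0)\in\Sn\cap\calS'$, note that by definition of $\Sn$ the point $z_0$ is an even two-torsion point on $\T_{\tau_0}$, and invoke the preceding proposition to deduce $\rk H(x_0)<g$. Hence $\tau_0=\phi(x_0)$ carries a singularity of its theta divisor at a two-torsion point that is not an ordinary double point, so $\tau_0\in\tn^{g-1}$.

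For the reverse inclusion $\tn^{g-1}\subset\phi(\Sn\cap\calS')$, I would pick $\tau_0\in\tn^{g-1}$ together with an even two-torsion point $z_0$ witnessing the non-ordinary singularity, so that $(\tau_0,z_0)\in\Sn$ and $\rk H(\tau_0,z_0)<g$. Applying the preceding proposition in the other direction then gives $(\tau_0,z_0)\in\calS'$, which yields $\tau_0\in\phi(\Sn\cap\calS')$.

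The only point that needs a careful sanity check is the equivalence between the geometric notion of a non-ordinary double point and the rank drop of $H(x_0)$: an ordinary double point is characterised by having a smooth projectivised tangent cone, i.e.\ by a non-degenerate Hessian quadratic form, so its failure — whether arising from a degenerate quadric tangent cone or from a singularity of higher multiplicity in which $H(x_0)$ vanishes identically — is in all cases equivalent to $\rk H(x_0)<g$. Beyond this bookkeeping there is no real obstacle; the corollary is essentially the preceding proposition pushed forward to $\calA_g$ via $\phi$.
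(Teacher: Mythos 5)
Your argument is correct and is exactly the route the paper takes: the corollary is an immediate pushforward of the preceding proposition, using that for $x_0\in\Sn$ the condition $\rk H(x_0)<g$ is by definition the failure of the two-torsion singularity to be an ordinary double point (including the higher-multiplicity case where $H(x_0)$ vanishes identically). The paper states the corollary without further proof, and your two inclusions plus the sanity check on the Hessian/ordinary-double-point equivalence fill in precisely what is implicit there.
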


\begin{rem}
From the previous proof it also follows that $\Sing\Sn\subset \Sn\cap \calS'$.
\end{rem}

Our further investigation  will  consider the subvariety
$$
 \calH:=\calS'^{g-1}:=\{x_0=(\tau_0, z_0)\in \calS': \rk H(x_0)<g\}\subset\calX_g
$$
(notice that since the derivative of a section of a
line bundle is a section of the same bundle when restricted to the zero locus of the section,
this is an algebraic subvariety of $\calX_g$). Note that $\calH$, being defined by explicit equations in the (derivatives of) theta functions, comes equipped with a scheme structure. Then we define the pushforward cycle
$$
 2H:=2N_0'^{g-1}:=\phi_*(\calH)\subset\calA_g
$$

Unlike in the case of the theta-null, $\calH\not\subset\Sing\calS$. Indeed, if
$z_0$ is not a two-torsion point, the condition $\rk H(x_0)<g$
does not imply that $\rk M(\tau_0, z_0)<g+1$, as the matrix $M$ at $z_0$ does not have as many zero entries as
in the theta-null case. Still, we have the set-theoretic inclusions
$$
 \Sn\cap \calS'\subset\calH\quad\mbox{and}\quad \tn^{g-1}\subset H.
$$

The locus $\calH$ is given locally by
$(g+2)$ equations (the $g+1$ equations for $\calS'$ together with the vanishing of the Hessian determinant), and thus
each irreducible component of $\calH$ has codimension at most $g+2$ in $\calX_g$. However, we note that $\calS_{\rm dec}\subset\calH\subset\calS$ is an irreducible component of codimension $g+1$. We now check that all other irreducible components of $\calH$ are indeed of expected codimension $g+2$.
Indeed, we first note that by the results of Ciliberto and van der Geer \cite{civdg2} the Andreotti-Mayer locus $N_k$ (parameterizing ppav whose  theta divisor has  singular locus of dimension at least
$k$) with $1\leq k\leq g-3$ has codimension at least $k+2$ in $\calA_g$, and thus its preimage in $\calS$ cannot be an irreducible component of $\calS$ for dimension reasons. Now for both $\calS'$ and $\calS_{\rm null}$ it is known that generically the singular points of the theta divisors are ordinary double points, and thus $\calH$ cannot be equal to either of these loci. Finally, by the results of Ein and Lazarsfeld \cite{eila1} the locus $N_{g-2}$ is equal to the locus of indecomposable ppav, and each component $\calA_h\times\calA_{g-h}$ of it has codimension too high, except for $h=1$.

The above discussion leads to the following result:
\begin{prop}
The Andreotti-Mayer locus $N_1$ is contained  in  $H$.
\end{prop}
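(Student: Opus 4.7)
The plan is to exploit the fact that a positive-dimensional singular locus on a theta divisor forces the Hessian of $\t$ to degenerate along it, which puts a suitable point of $\calS$ into $\calH$ and hence the ppav into $H$. Concretely, for $\tau_0\in N_1$ I will choose a general point $z_0$ on a positive-dimensional irreducible component $Z$ of $\Sing\T_{\tau_0}$, show $\rk H(\tau_0,z_0)<g$, and identify $(\tau_0,z_0)$ with a point of $\calH$, so that $\tau_0=\phi(\tau_0,z_0)\in H$.

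The technical core is a tangent cone calculation. If $z_0$ is a point of multiplicity $\geq 3$ on $\T_{\tau_0}$, then $H(\tau_0,z_0)\equiv 0$ and the conclusion is immediate, so I may assume $z_0$ is a genuine double point. Expanding
$$\t(\tau_0,z_0+w)=\tfrac12\,{}^t w\cdot H(\tau_0,z_0)\cdot w+O(|w|^3),$$
differentiation yields $\partial\t/\partial z_i(\tau_0,z_0+w)=(H(\tau_0,z_0)w)_i+O(|w|^2)$. Since $Z$ is cut out near $z_0$ by the simultaneous vanishing of these partial derivatives, its tangent space satisfies $T_{z_0}Z\subset\ker H(\tau_0,z_0)$; as $\dim Z\geq 1$, this forces $\rk H(\tau_0,z_0)\leq g-1$.

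It remains to place $(\tau_0,z_0)$ in the component of $\calS$ on which $\calH$ is supported. A generic $z_0\in Z$ avoids the finitely many two-torsion points in $A_{\tau_0}[2]$, so $(\tau_0,z_0)\notin\Sn$; depending on whether $(A_{\tau_0},\T_{\tau_0})$ lies in $\calA_1\times\calA_{g-1}$ or not, $(\tau_0,z_0)$ belongs to $\calS_{\rm dec}$ or to $\calS'$. In the first case $(\tau_0,z_0)\in\calS_{\rm dec}\subset\calH$ by the containment recorded immediately before the statement; in the second case the rank inequality places $(\tau_0,z_0)$ in $\calH=\calS'^{g-1}$ directly. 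Either way $\tau_0\in H$, proving the proposition. The main potential obstacle is this component bookkeeping — one must ensure that $Z$ is not entirely contained in $A_{\tau_0}[2]$, but that is automatic since $A_{\tau_0}[2]$ is finite while $\dim Z\geq 1$.
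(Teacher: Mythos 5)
Your argument is correct and is essentially the paper's own proof: the paper likewise differentiates the equations $\partial\t/\partial z_i=0$ along a curve of singular points through a smooth point $z_0$ to conclude that the tangent vector lies in $\ker H(x_0)$, which is exactly your observation that $T_{z_0}Z\subset\ker H(\tau_0,z_0)$. The only difference is that you additionally spell out the bookkeeping placing $(\tau_0,z_0)$ in $\calS'$ or $\calS_{\rm dec}$ (and the multiplicity $\geq 3$ case), which the paper leaves implicit; this is a harmless refinement, not a new approach.
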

\begin{proof}
Indeed, for $\tau_0\in N_1$ we let $z(t)\subset\Sing\T_\tau$ be a curve of singular points such that $z(0)=z_0$ is a smooth
point of the curve.
Differentiating (\ref{sing}) with respect to $t$, we get $g$ non-zero equations (the derivative of the first one will vanish):
$$
 \sum_{j=1}^g
  \frac{\p^2\theta(\tau_0, z(t))}{\p z_i\p z_j}\frac{\p z_j(t)}{\p t}=0.
$$
Denoting
$$
 v:=\left(\frac{\p z_1}{\p t}, \dots, \frac{\p z_g}{\p t} \right)|_{t=0}
$$
this means that $H(x_0)\cdot v=0$, and since by our assumption $z_0$ is a smooth point of the curve and thus
$v\ne 0$, the matrix $H(x_0)$ has a kernel, and in particular is not of maximal rank.
\end{proof}

\begin{cor}\label{Mgin}
For $g\geq 5$ the locus of Jacobians $\calJ_g$ is contained in  $H$. Hence for $g\geq 5$ set-theoretically $\tn^{g-1}\subsetneq  H$.
\end{cor}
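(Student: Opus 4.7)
The plan is to deduce the corollary from the just-proved inclusion $N_1\subset H$ via classical Brill--Noether/Riemann singularity theory, and then compare with $\tn^{g-1}$ by exhibiting generic Jacobians without vanishing theta-null.

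First I would show the key inclusion $\calJ_g\subset N_1$ for $g\ge 5$. For a smooth curve $C$ of genus $g$, Riemann's singularity theorem identifies $\Theta\subset \Jac(C)$ with (a translate of) $W_{g-1}(C)$ and gives $\Sing(\Theta)=W^1_{g-1}(C)$, with multiplicity at $L$ equal to $h^0(L)$. By the existence theorem of Kempf and Kleiman--Laksov, $W^1_{g-1}(C)$ is non-empty of dimension at least the Brill--Noether number
$$
 \rho(g,1,g-1)=g-(1+1)(g-(g-1)+1)=g-4.
$$
For $g\ge 5$ this gives $\dim\Sing(\Theta_{\Jac(C)})\ge 1$, so $\Jac(C)\in N_1$. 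Taking the closure in $\calA_g$ yields $\calJ_g\subset N_1$. Combined with the previous proposition $N_1\subset H$ this establishes $\calJ_g\subset H$.

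For the strict inclusion $\tn^{g-1}\subsetneq H$, recall that $\tn^{g-1}\subset H$ was already observed earlier in this section. To see the inclusion is strict, it suffices to produce a point of $H$ not in $\tn^{g-1}\subset\tn$. A generic Jacobian does not possess a vanishing theta-null (indeed $\tn\cap\calJ_g$ is a proper closed subvariety of $\calJ_g$, namely the locus of curves with an effective even theta characteristic), so $\calJ_g\not\subset\tn$, and in particular $\calJ_g\not\subset\tn^{g-1}$. Since we have just shown $\calJ_g\subset H$, the inclusion $\tn^{g-1}\subset H$ must be strict for $g\ge 5$.

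There is essentially no obstacle here: the statement is a direct combination of the preceding proposition with standard facts from the geometry of Jacobians, and the only thing to check is the numerical inequality $\rho(g,1,g-1)=g-4\ge 1$ for $g\ge 5$. The mild point worth flagging is the passage from the generic Jacobian to the closure $\calJ_g\subset\calA_g$, but since $N_1$ and $H$ are closed in $\calA_g$ this is automatic.
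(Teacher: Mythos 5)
Your proposal is correct and follows the same route as the paper: the paper's proof is precisely $\calJ_g\subset N_1\subset H$ for $g\ge 5$ combined with the observation that $\tn$ does not contain $\calJ_g$. You merely spell out the inclusion $\calJ_g\subset N_1$ (via Riemann's singularity theorem and the Brill--Noether count $\rho(g,1,g-1)=g-4\ge 1$), which the paper takes as known.
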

\begin{proof}
Indeed, we have $\calJ_g\subset N_1\subset  H$ for $g\ge 5$. However, since
for all $g$ the divisor $\tn$ does not contain $\calJ_g$, we must have $\calJ_g\subset \overline{H\setminus\tn^{g-1}}$.
\end{proof}

\section{Class computations in cohomology}
In this section we compute the class of the components of the expected dimension of the loci $\calH$ and  $H$ in Chow and cohomology rings (our computation works in both, as we only use Chern
classes of vector bundles) of $\calX_g$ and $\calA_g$, respectively.

Recall that Mumford \cite{mumforddimag} computed the class of $N'_0$ in the Picard  group of the partial toroidal compactification of the moduli space $\calA_g$ (the class of $\tn$ is easier, and was computed previously {}by Freitag \cite{freitagbooksiegel}{}). We shall compute the classes  of the codimension 2  cycles  $H$ and $\tn^{g-1}$ on $\calA_g$. As  a consequence we will obtain  a complete description of  $H$ in genus 4, rederive some result of \cite{grsmgen4}, and reprove that for $g\geq 5$ the locus $H$ has other components besides $\tn^{g-1}$. Debarre \cite[Section 4]{debarredecomposes} computed the class of the intersection $\tn\cap N_0'$ and used this to show that this intersection is not irreducible. In spirit our computation is similar, though much more involved.

For the universal family $\phi:\calX_g\to\calA_g$ we denote by $\Omega_{\calX_g /\calA_g}$ the relative cotangent bundle,
by $\EE:=\phi_*\Omega_{\calX_g/\calA_g}$ we denote its pushforward --- the rank $g$ vector bundle that is called the
Hodge bundle. Then the Hodge class $\lambda_1:=c_1(\EE)$ is the
Chern class of the line bundle of modular forms of weight one on $\calA_g$.

The basic tool for our computation of pushforwards is the following:
\begin{lm}
The pushforward under $\phi$ of powers of the universal theta divisor $\T\subset\calX_g$ can be computed as follows:
$$
 \phi_*([\T^k])=\begin{cases}
 0&{\rm if\ }k<g\\
 g!&{\rm if\ }k=g\\
 \frac{(g+1)!}{2} \lambda_1 &{\rm if\ }k=g+1\\
 \frac{(g+2)!}{8} \lambda_1 ^2&{\rm if\ }k=g+2
\end{cases}
$$
\end{lm}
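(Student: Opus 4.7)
My plan is to handle the first two cases by a fiberwise argument and the last two by applying Grothendieck--Riemann--Roch (GRR) to $\phi$ and the line bundle $\calO(\T)$.

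The vanishing for $k<g$ is automatic: $\phi$ is proper of relative dimension $g$, so $\phi_*$ sends $CH^k(\calX_g)$ to $CH^{k-g}(\calA_g)=0$. For $k=g$, restricting to any fibre gives $[\T_\tau]^g=g!\,[\mathrm{pt}]$ (top self-intersection of a principal polarization), whence $\phi_*([\T]^g)=g!$ in $CH^0(\calA_g)={\bf Q}$.

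For $k=g+1$ and $k=g+2$ I would apply GRR to $\phi$ and $\calO_{\calX_g}(\T)$. The ingredients are: (i) $\Omega^1_{\calX_g/\calA_g}\cong\phi^*\EE$, hence $T_{\calX_g/\calA_g}=\phi^*\EE^*$, and by the projection formula
\[
\mathrm{ch}(\phi_!\calO(\T))=\mathrm{td}(\EE^*)\cdot\phi_*(e^{[\T]});
\]
(ii) Kodaira vanishing on each fibre (recall $\T_\tau$ is ample) yields $R^i\phi_*\calO(\T)=0$ for $i>0$, so that $M:=\phi_*\calO(\T)$ is a line bundle and $\phi_!\calO(\T)=M$; (iii) $c_1(M)=0$ in $CH^1(\calA_g)_{\bf Q}$, so that $\mathrm{ch}(M)=1$; (iv) Mumford's relation $c(\EE)c(\EE^*)=1$ gives $\lambda_2=\lambda_1^2/2$, whence
\[
\mathrm{td}(\EE^*)=1-\frac{\lambda_1}{2}+\frac{\lambda_1^2}{8}+\cdots.
\]

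Substituting into GRR produces the identity $1=\mathrm{td}(\EE^*)\cdot\phi_*(e^{[\T]})$. Inverting the Todd series gives $\phi_*(e^{[\T]})=1+\frac{\lambda_1}{2}+\frac{\lambda_1^2}{8}+\cdots$, and matching this against the tautological expansion $\phi_*(e^{[\T]})=\sum_{k\ge g}\phi_*([\T]^k)/k!$ extracts the claimed coefficients $(g+1)!/2$ and $(g+2)!/8$ term by term. The delicate point is step (iii): a naive count of the automorphy weight of the theta function $\theta(\tau,z)$ would suggest the $\det(c\tau+d)^{1/2}$ factor makes $c_1(M)=\lambda_1/2$, which would spoil both numerical factors by a factor of two. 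The correct rational vanishing must be justified by a careful analysis of how the $z$-dependent part of the automorphy factor contributes to the identification of the pushforward line bundle.
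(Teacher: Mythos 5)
Your argument reconstructs essentially the computation the paper relies on: the proof in the text is a citation to Mumford's GRR calculation on p.~373 of \cite{mumforddimag} (and to van der Geer for the general case), and your fibrewise treatment of $k\le g$, the identification $T_{\calX_g/\calA_g}=\phi^*\EE^{\vee}$, the vanishing of higher direct images, and the expansion $\mathrm{td}(\EE^{\vee})=1-\tfrac{\lambda_1}{2}+\tfrac{\lambda_1^2}{8}+\cdots$ via $\lambda_2=\lambda_1^2/2$ are all correct and match that route. The one step you leave open, namely $c_1(\phi_*\calO_{\calX_g}(\T))=0$, is indeed the crux --- you are right that the naive ``weight $\tfrac12$'' count would double both answers --- but it closes more easily than your final sentence suggests. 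The canonical section of $\calO_{\calX_g}(\T)$, i.e.\ the theta function itself, induces a map of line bundles $\calO_{\calA_g}=\phi_*\calO_{\calX_g}\rightarrow \phi_*\calO_{\calX_g}(\T)$ which is nonzero on every fibre, because $\T$ contains no fibre of $\phi$ (the function $\theta(\tau,\cdot)$ is not identically zero for any $\tau$); a fibrewise-nonzero morphism of line bundles is an isomorphism, so $\phi_*\calO_{\calX_g}(\T)$ is trivial. The weight $\tfrac12$ lives elsewhere: it is the class of $s^*\calO_{\calX_g}(\T)$ for the zero section $s$ (which is why theta \emph{constants} have weight $\tfrac12$), and correspondingly it is the shifted class $[\T]-\tfrac12\phi^*\lambda_1$, trivialized along $s$, whose pushforwards all vanish except in degree $g$ --- consistent with, and an alternative check on, your expansion. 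With that step filled in, your proof is complete and correct.
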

\begin{proof}
The first three cases are consequence of the computation in \cite[page 373]{mumforddimag}. The last  case is the next step of  the same computation, recalling that $c_2(\EE)=\lambda_1 ^2/2$. In full generality the pushforwards of the universal theta divisor were computed and studied in \cite{vdgeercycles} (note that the universal theta divisor trivialized along the zero section, that is, the class $[\Theta]-\lambda_1/2$, is used there, and it is shown that $\phi_*\left(([\Theta]-\lambda_1/2)^k\right)=0$ unless $k=g$).
\end{proof}

Note that the locus $\calS$ is given as the scheme of zeroes of theta function and its derivatives, i.e.~given by zeroes of a section of $\Omega_{\calX_g /\calA_g}(\Theta)\otimes\calO_{\calX_g}(\T)$ (see \cite{mumforddimag}). Hence
$$[N_0]=\phi_*\left(c_g\left(\Omega_{\calX_g /\calA_g}(\Theta)\otimes_{\calO_{\calX_g}}\calO_{\T}\right)\right).$$
Recall now that $\calS^{g-1}\subset\calS$ is defined by the equation ${\rm det}\ H(x_0)=0$. On $\calS$, each second
derivative of the theta function is a section of $\Theta$, and the determinant of the
Hessian matrix is known (see \cite{grsmordertwo},\cite{rdejong})  to be a section of
$$
  \calO_{\calX_g}(g\T)\otimes \phi^*(\det\EE)^{\otimes 2}\otimes\calO_\calS.
$$
Using the above formula for the class of $\calS$, to get $H$ we will need to compute the pushforward
$$
  \phi_*\left(c_g\left(\Omega_{\calX_g /\calA_g}(\Theta)_{|\T}\right)\cdot( g\T+2\phi^*\lambda_1) \right)
$$

The computation  becomes rather delicate since $\calS^{g-1}$ is not equidimensional. We set
$$
  \calS_{\mathrm{indec}}:=\overline{\calS\setminus\calS_{\mathrm{dec}}}=\calS'\cup\calS_{\rm null},
$$
which is then purely of codimension $g+2$ in $\calX_g$, and thus we have
$$
 [\calS_{\mathrm{indec}}^{g-1}]=[\calS_{\mathrm{indec}}] \cdot( g[\T]+2\lambda_1)\in CH^{g+2}(\mathcal{X}_g).
$$

However, for dimension reasons it turns out that we often do not need to
deal with the class of $\calS_{\mathrm{dec}}$:
\begin{prop}\label{propunu}
For $g\geq 4$ we have the  equality of codimension 2 classes on $\calA_g$:
$$
 [N_0^{g-1}]=[N_{0,\, \mathrm{indec} }^{g-1}]:=[\phi_*(\calS_{\mathrm{indec}}^{g-1})]
$$
Moreover this class can be computed as
$$
 [N_0^{g-1}]=\frac{g!}{8}(g^3+ 7g^2+18g+24)\lambda_1^2\in CH^2(\calA_g).
$$
\end{prop}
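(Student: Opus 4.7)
The strategy has two steps. First I would show that the decomposable part $\calS_{\mathrm{dec}}$ makes no contribution to a codimension-$2$ cycle class on $\calA_g$, reducing the problem to the indecomposable part $\calS_{\mathrm{indec}}=\calS_{\mathrm{null}}\cup\calS'$. Second I would compute the pushforward $\phi_*[\calS_{\mathrm{indec}}^{g-1}]$ by standard intersection theory on $\calX_g$ together with the pushforward formulas already established.

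For the reduction step, observe that $\phi(\calS_{\mathrm{dec}})$ is contained in the locus of products $\calA_1\times\calA_{g-1}\subset\calA_g$, which has codimension $g-1$. When $g\ge 4$ this codimension is at least $3$, so any cycle supported there vanishes in $CH^2(\calA_g)$. Even though $\calS_{\mathrm{dec}}$ is scheme-theoretically contained in the Hessian vanishing locus with some positive multiplicity (the Hessian of the theta function of a product drops rank by at least $2$), this contributes nothing to $[N_0^{g-1}]\in CH^2(\calA_g)$, so it is legitimate to compute only $\phi_*[\calS_{\mathrm{indec}}^{g-1}]$.

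For the intersection-theoretic computation, use $\Omega_{\calX_g/\calA_g}=\phi^*\EE$: the scheme $\calS$ is cut out on $\T$ by a section of the rank-$g$ bundle $\phi^*\EE\otimes\calO(\T)|_\T$, and expanding the top Chern class via the Chern roots gives
$$[\calS]=c_g\bigl(\phi^*\EE\otimes\calO(\T)\bigr)\cdot[\T]=\sum_{k=0}^g c_{g-k}(\EE)\cdot[\T]^{k+1}\in CH^{g+1}(\calX_g).$$
On $\calS_{\mathrm{indec}}$, the equation $\det H=0$ defines a divisor whose class is $g[\T]+2\lambda_1$, so $[\calS_{\mathrm{indec}}^{g-1}]=[\calS_{\mathrm{indec}}]\cdot(g[\T]+2\lambda_1)$. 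Pushing forward via the projection formula, only the terms producing $[\T]^g$, $[\T]^{g+1}$, or $[\T]^{g+2}$ survive: three contributions from the $g[\T]$ factor (for $k=g-2,g-1,g$) and two from the $2\lambda_1$ factor (for $k=g-1,g$). Substituting the values $\phi_*[\T]^g=g!$, $\phi_*[\T]^{g+1}=\tfrac{(g+1)!}{2}\lambda_1$, $\phi_*[\T]^{g+2}=\tfrac{(g+2)!}{8}\lambda_1^2$ from the pushforward lemma, together with the Mumford relation $c_2(\EE)=\lambda_1^2/2$, every surviving term reduces to a scalar multiple of $\lambda_1^2$. Collecting coefficients and routine algebraic simplification produces exactly $\tfrac{g!}{8}(g^3+7g^2+18g+24)$.

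The main obstacle is essentially bookkeeping: correctly identifying which indices $k$ produce nonvanishing pushforwards and carefully combining $c_1(\EE)=\lambda_1$, $c_2(\EE)=\lambda_1^2/2$, and the three pushforward values. The conceptual subtlety --- that $\calS^{g-1}$ as a scheme fails to be equidimensional because $\calS_{\mathrm{dec}}$ sits inside the Hessian vanishing locus --- is cleanly resolved by the initial codimension argument, which permits the whole calculation to be carried out on $\calS_{\mathrm{indec}}$.
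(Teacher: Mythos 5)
Your proposal is correct and follows essentially the same route as the paper: the decomposable component is discarded because its image lies in $\calA_1\times\calA_{g-1}$, of codimension $g-1\geq 3$, and the class is then computed by expanding $c_g\bigl(\phi^*\EE\otimes\calO(\T)\bigr)\cdot[\T]\cdot(g[\T]+2\phi^*\lambda_1)$ and pushing forward using $\phi_*[\T^g]=g!$, $\phi_*[\T^{g+1}]=\tfrac{(g+1)!}{2}\lambda_1$, $\phi_*[\T^{g+2}]=\tfrac{(g+2)!}{8}\lambda_1^2$ and $c_2(\EE)=\lambda_1^2/2$. Your identification of the five surviving terms (three from the $g[\T]$ factor, two from the $2\lambda_1$ factor) matches the paper's computation exactly and yields the stated coefficient $\tfrac{g!}{8}(g^3+7g^2+18g+24)$.
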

\begin{proof}
The first statement is a consequence of the fact that  the map $\phi$ has $(g-2)$-dimensional fiber along
$\calS_{\mathrm{dec}}$, and generically $0$-dimensional fibers over $\calS_{\mathrm{indec}}$. (Note that this is the place in the argument
where we are using the assumption $g\ge 4$ to ensure that $\calS_{\mathrm{indec}}^{g-1}$ is in fact non-empty, and that the codimension
of its image under $\phi$ is lower than the codimension of $\calA_1\times\calA_{g-1}$.{})
We now compute
$$
\begin{aligned}
 &[N_0^{g-1}]=\phi_*\left(c_g\left(\Omega_{\calX_g /\calA_g}(\Theta)\cdot\T\right)\cdot( g\T+2\phi^* \lambda_1 )\right)\\
 &=\phi_*\left((\T^g+\T^{g-1}\phi^*\lambda_1 +\T^{g-2}\phi^*\lambda_2+\dots)\cdot  (g\T^2+2\T \phi^* \lambda_1 )\right)\\
 &=\phi_*\left(g\left(\T^{g+2}+\T^{g+1}\phi^* \lambda_1 + \T^{g}\frac{\phi^* \lambda_1 ^2}{2}\right)+(2\T^{g+1}\phi^* \lambda_1 +2 \T^{g}\phi^* \lambda_1 ^2)\right)\\
 &=\left(\frac{g(g+2)!}{8}+\frac{g(g+1)!}{2}+\frac{g(g)!}{2}+ (g+1)!+ 2g!\right)\lambda_1 ^2\\
 &=\frac{g!}{8}(g^3+ 7g^2+18g+24)\lambda_1 ^2.
\end{aligned}
$$
\end{proof}

We now compute the class of the locus $\tn^{g-1}$: recall that a theta constant is
a modular form of weight $\frac{1}{2}$, and that the determinant of the Hessian matrix of $\tt\ep\de(\tau,z)$ evaluated
at $z=0$ is a modular form of weight $\frac{g+4}{2}$ along the zero locus of
$\tt\ep\de(\tau)$ (see \cite{grsmgen4},\cite{rdejong}). We thus get:
\begin{prop}\label{propdoi}
For   $g\geq 2$ we have
$$
 [\tn^{g-1}]=(g+4)2^{g-3}( 2^g+1)\lambda_1^2.
$$
\end{prop}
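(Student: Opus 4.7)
The plan is to express $\tn^{g-1}$ as the union, over even characteristics $[\ep,\de]\in(\ZZ/2\ZZ)^{2g}_{\even}$, of codimension-two cycles $\tn^{g-1}_{[\ep,\de]}$, and to compute the class of each summand as a product of two natural modular conditions: the vanishing of the theta constant and the vanishing of the Hessian determinant along its zero locus. Summing over the $2^{g-1}(2^g+1)$ even characteristics gives the formula.

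First I would fix an even characteristic $[\ep,\de]$ and pass to a finite cover of $\calA_g$ (e.g.~$\HH_g/\Gamma_g(4,8)$) on which $\tt\ep\de(\tau,0)$ is a well-defined modular form of weight $\tfrac{1}{2}$. Its zero locus $\tn_{[\ep,\de]}$ is then a Cartier divisor representing $\tfrac{1}{2}\lambda_1$, since modular forms of weight $k$ are sections of $(\det\EE)^{\otimes k}$ with first Chern class $k\lambda_1$.

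Next I would exploit the fact that at the two-torsion point $z_0=(\tau\ep+\de)/2$ the function $\tt\ep\de(\tau,\cdot)$ is even, so all its first derivatives vanish identically, and on $\tn_{[\ep,\de]}$ the value also vanishes; hence $z_0$ is automatically a singular point of the shifted theta divisor. The failure to be an ordinary double point is precisely the vanishing of the Hessian determinant at $z_0$, which, as recalled just before the statement (and proved in \cite{grsmgen4},\cite{rdejong}), is a modular form of weight $\tfrac{g+4}{2}$ along $\tn_{[\ep,\de]}$. Therefore, assuming the intersection is proper, the codimension-two cycle $\tn^{g-1}_{[\ep,\de]}$ has class
$$[\tn^{g-1}_{[\ep,\de]}]=[\tn_{[\ep,\de]}]\cdot\tfrac{g+4}{2}\lambda_1=\tfrac{g+4}{4}\lambda_1^2.$$
Summing over even characteristics, and using that $\Sp$ permutes them transitively while a generic point of $\tn$ lies on a unique $\tn_{[\ep,\de]}$ (so that multiplicities are one in the descent to $\calA_g$), I obtain
$$[\tn^{g-1}]=2^{g-1}(2^g+1)\cdot\tfrac{g+4}{4}\lambda_1^2=(g+4)\,2^{g-3}(2^g+1)\,\lambda_1^2.$$

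The main obstacle is verifying properness of the intersection, i.e.~that the Hessian determinant does not vanish identically on any irreducible component of $\tn_{[\ep,\de]}$. This is ensured by the fact (already used implicitly throughout Section 2) that a generic ppav in the theta-null divisor has an ordinary double point at the relevant two-torsion point, so the Hessian is generically nondegenerate along $\tn_{[\ep,\de]}$. A secondary technical point, transparent but worth flagging, is that the theta constant and the Hessian modular form are only equivariant for a finite-index subgroup of $\Sp$, so strictly the class computation is carried out on the cover and then descended; no individual $\tn_{[\ep,\de]}$ is a divisor on $\calA_g$, but their weighted union is, and the weights match because the orbit of $[\ep,\de]$ under $\Sp$ has length $2^{g-1}(2^g+1)$.
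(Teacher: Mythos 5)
Your proof is correct and follows essentially the same route as the paper: decompose $\tn^{g-1}$ over the $2^{g-1}(2^g+1)$ even characteristics, and for each multiply the class $\tfrac{1}{2}\lambda_1$ of the theta constant's zero locus by the class $\tfrac{g+4}{2}\lambda_1$ of the Hessian determinant along it. The additional care you take with descent from the level cover and with properness of the intersection is left implicit in the paper but is the same argument.
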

\begin{proof}

Indeed, we have
$$
 \tn^{g-1}=\left\lbrace\tau\in\HH_g: \exists[\ep,\de]\ {\rm even}\ ,
 \tt\ep\de(\tau)=\det\left(
 \frac{\partial^2\tt\ep\de(\tau,z)}{\partial z_j\partial
 z_k}\right)|_{z=0}=0\right\rbrace,
$$
Since there are $2^{g-1}( 2^g+1)$ even characteristics, for each of them we get a contribution of $\lambda_1/2$
(for the zero locus of the corresponding theta constant) times $(g+4)\lambda_1/2$ (for the Hessian).
\end{proof}

The proof of Theorem \ref{thm:class} comes by subtraction using the class formulas established in Propositions
\ref{propunu} and \ref{propdoi}, while taking into account the relation given in formula (\ref{cup}).

\section{The case $g=4$}
In this section we will work out the situation for genus 4 in detail, eventually proving Theorem \ref{thm:genus4}. By the above formulas for $g=4$ we have
$$
 [\tn^{3}]=272\lambda_1 ^2;\qquad
 [N_0^{3}]=3\cdot 272\lambda_1 ^2.
$$
Moreover, going back from $N_0^{g-1}$ to  $H=N_0'^{g-1}$, we recall that for arbitrary genus by definition we have $N_0^{g-1}=\tn^{g-1}+2 H$, and since
at the intersection of the two components $\tn$ and $N_0'$ the singular points lie on both, we also have that set-theoretically
$$
 \tn^{g-1}\subset H
$$
As an immediate consequence we obtain:
\begin{prop}
The following identity holds at the level of codimension two cycles on $\calA_4$:
$$N_0^{3}=3\tn^{3}.$$
\end{prop}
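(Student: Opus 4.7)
The plan is to derive the proposition as an immediate numerical consequence of the two class formulas already established in this section, combined with the defining cycle identity (\ref{cup}). No new geometric input is required: the work has effectively been done in Propositions \ref{propunu} and \ref{propdoi}, and all that remains is to specialize to $g=4$ and compare coefficients.

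First I would substitute $g=4$ into Proposition \ref{propunu}. The scalar $\tfrac{g!}{8}(g^3+7g^2+18g+24)$ becomes $\tfrac{24}{8}(64+112+72+24) = 3 \cdot 272 = 816$, so that $[N_0^3] = 816\,\lambda_1^2 \in CH^2(\calA_4)$. Next I would substitute $g=4$ into Proposition \ref{propdoi}. The scalar $(g+4)\,2^{g-3}(2^g+1)$ becomes $8 \cdot 2 \cdot 17 = 272$, so that $[\tn^3] = 272\,\lambda_1^2$. Since $816 = 3 \cdot 272$, the identity $[N_0^3] = 3\,[\tn^3]$ in $CH^2(\calA_4)$ follows.

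To present the statement in the form asserted, I would then invoke the cycle-level relation (\ref{cup}), namely $N_0^3 = \tn^3 + 2H$, which transforms the class identity just obtained into $2[H] = 2[\tn^3]$, hence $[H] = [\tn^3]$ in $CH^2(\calA_4)$. Combined with the set-theoretic inclusion $\tn^3 \subset H$ recorded earlier in this section, this strongly suggests the sharper assertion $H = \tn^3$ of Theorem \ref{thm:genus4}; however, for the proposition itself, only the class identity is needed, and the argument is complete. The main subtlety—and the part reserved for the proof of Theorem \ref{thm:genus4} rather than this corollary—is promoting the class equality $[H]=[\tn^3]$ to a set-theoretic (and ultimately cycle-theoretic) equality, which requires additionally ruling out extra codimension two components of $H$; this will follow from the dimension estimates of Ciliberto--van der Geer, according to which $N_i$ has codimension at least $i+2\geq 3$ in $\calA_4$ for $i \geq 1$.
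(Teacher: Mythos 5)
Your numerical computation is correct: substituting $g=4$ into Propositions \ref{propunu} and \ref{propdoi} gives $[N_0^3]=816\,\lambda_1^2=3\cdot 272\,\lambda_1^2=3[\tn^3]$ in $CH^2(\calA_4)$. But the proposition asserts an identity of \emph{cycles}, $N_0^3=3\tn^3$, not merely of classes, and your claim that ``for the proposition itself, only the class identity is needed'' is where the argument falls short. The paper's proof has three ingredients: (i) the cycle-theoretic inclusion $\tn^3\subset H$, which together with $N_0^3=\tn^3+2H$ shows that $3\tn^3$ is a \emph{subcycle} of $N_0^3$; (ii) the equality of classes, which, since $\tn^3$ is equidimensional of codimension two, forces any residual part of $N_0^3$ to be of codimension greater than two; and (iii) a geometric argument ruling out such extra lower-dimensional components. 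You supply (ii), invoke the inclusion from (i) only as motivation for Theorem \ref{thm:genus4}, and defer (iii) entirely --- but (iii) is precisely the content of this proposition in the paper, and Theorem \ref{thm:genus4} is then deduced as an immediate consequence; you have the logical order reversed.

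Moreover, the tool you propose for the deferred step would not suffice. The Ciliberto--van der Geer bound $\op{codim}(N_i)\geq i+2$ for $i\geq 1$ only controls ppav whose theta divisor has \emph{positive-dimensional} singular locus; it says nothing about a ppav whose theta divisor has an isolated non-ordinary double point away from the two-torsion points, which is exactly the kind of extra component one must exclude here (and which genuinely occurs for $g=5$, namely the component $H_1$). The paper's exclusion is specific to genus $4$: there $N_0'$ is the Jacobian locus, and by Riemann's Singularity Theorem the tangent cone to $\Theta_C$ at a singular point $L\in W^1_3(C)$ is the quadric in $\PP^3$ containing the canonical curve, which degenerates exactly when $L$ is a theta characteristic, i.e.\ when the singularity sits at a two-torsion point. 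Hence a Jacobian lies in $N_0'^{\,3}$ if and only if it lies in $\tn^3$, and no extra component can appear. This geometric step is what your proposal is missing.
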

\begin{proof}From the formulas above we see that the cycle $2\tn^{3}$ appears inside $2N_0'^{3}$, and thus that $3\tn^{3}$ is a subcycle of $N_0^3$.
Since the Chern classes are equal and $\tn^3$ is equidimensional, we need to rule out the possibility of $N_0^{3}$
having an extra lower dimensional component. However, for genus 4 we know geometrically that $N_0'$ is
the locus of Jacobians. Using Riemann's Singularity Theorem for genus 4 curves we then see
that the period matrix of a Jacobian is in $N_0'^{3}$ if and only if its theta divisor is singular at a two-torsion point,
i.e.~if this Jacobian lies in $\tn^3$ (notice that this reproves a result of \cite{grsmgen4}).
\end{proof}

The proof of Theorem \ref{thm:genus4} is an immediate consequence of the above facts.
We can prove something more: let $I_4$ be the Schottky modular form of weight 8 defining the Jacobian locus. Let then
$$
{\rm det}\, \calD (I_4):= {\rm det}\left(\begin{array}{rrrr}
 \,\frac{\p I_4}{\p\tau_{11}}&\frac{1}{2}\frac{\p I_4}
 {\p\tau_{12}}&\dots&\frac{1}{2}\frac{\p I_4}{\p\tau_{1 4}}\\
 \frac{1}{2}\frac{\p I_4}{\p \tau_{21}}&\frac{\p I_4}{\p
 \tau_{22}}&\dots&\frac{1}{2}\frac{\p I_4}{\p\tau_{2 4}}\\
 \dots&\dots&\dots&\dots\\
 \frac{1}{2}\frac{\p I_4}{\p \tau_{4 1}}& \dots&\dots& \,\
 \frac{\p I_4}{\p\tau_{4 4}}\end{array}\right)
$$
The restriction of this determinant to the zero locus of $I_4$ is a modular form of weight $34=8\cdot 4+2$.
By Proposition \ref{pr1} we know that for a point in $N_0'\setminus  H$ the matrix $\calD(I_4)$ is proportional to the Hessian matrix $H(x_0)$, hence it vanishes exactly along $\tn^3$. The class of the cycle
$$ \lbrace I_4=\det\calD(I_4)=0\rbrace$$
is $8\cdot34\, \lambda_1 ^2=272\, \lambda_1 ^2$. Thus we obtain
\begin{prop}
The locus $\tn^3\subset\calA_4$ is a complete intersection given by
$$
 I_4=\det\calD(I_4)=0.
$$
\end{prop}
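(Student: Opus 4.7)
The plan is to identify the intersection cycle $Z:=\{I_4=\det\calD(I_4)=0\}$ with $\tn^3$ by combining a set-theoretic inclusion with the matching Chow class already computed in this section.

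First, I would establish the set-theoretic inclusion $Z\subseteq\tn^3$. If $\tau\in Z$, then $\tau$ lies in $\{I_4=0\}=N_0'=\calJ_4$. If moreover $\tau\notin H$, the singular point $x_0$ of $\Theta_\tau$ is an ordinary double point, so $\det H(x_0)\neq 0$; by the proportionality $\calD(I_4)|_\tau\propto H(x_0)$ asserted just before the proposition, it follows that $\det\calD(I_4)(\tau)\neq 0$, contradicting $\tau\in Z$. Hence $\tau\in H$, and by Theorem \ref{thm:genus4} we have $H=\tn^3$ in genus $4$, so $\tau\in\tn^3$.

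Next, I would compare Chow classes. The divisor $\{I_4=0\}=N_0'$ has class $8\lambda_1$, and the restriction $\det\calD(I_4)|_{N_0'}$ is a nonzero modular form of weight $34$, cutting out a divisor of class $34\,\lambda_1|_{N_0'}$ inside $N_0'$. Thus $[Z]=8\lambda_1\cdot 34\lambda_1=272\,\lambda_1^2$ in $CH^2(\calA_4)$, which agrees with $[\tn^3]$ by Proposition \ref{propdoi}. Since $Z\subseteq\tn^3$ set-theoretically and $\tn^3$ is pure of codimension $2$, the equality of classes forces $Z=\tn^3$ as cycles (with no excess multiplicity or embedded components). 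The two equations therefore cut out a codimension $2$ locus and so form a regular sequence, giving the complete intersection description.

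The delicate point is the proportionality $\calD(I_4)\propto H(x_0)$ on $N_0'\setminus H$: one needs the scalar factor to be nonzero, so that the implication "$\det H(x_0)\neq 0\Rightarrow\det\calD(I_4)(\tau)\neq 0$" is valid. This follows from the fact that the Schottky form $I_4$ is a polynomial in theta constants whose derivatives with respect to $\tau$ convert, via the heat equation, into second derivatives of $\theta$ evaluated at the singular point $x_0$; the resulting factor of proportionality can be read off explicitly from this substitution and is nonzero along $N_0'\setminus H$, which is all that the argument requires.
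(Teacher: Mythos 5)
Your argument is correct and is essentially the paper's own: both rest on the proportionality of $\calD(I_4)$ to the Hessian $H(x_0)$ at points of $N_0'\setminus H$ (Proposition \ref{pr1}), the weight count $34=8\cdot 4+2$, the resulting class $8\cdot 34\,\lambda_1^2=272\,\lambda_1^2=[\tn^3]$, and the genus $4$ identification $H=\tn^3$ from Theorem \ref{thm:genus4}. The only difference is organizational --- you prove one set-theoretic inclusion and let the class comparison supply the rest, and you explicitly flag the nonvanishing of the proportionality scalar, which the paper leaves implicit.
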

{}
We observe that, by Riemann's Theta Singularity theorem,  this is the locus of Jacobians  with  theta divisor  singular at a two-torsion point.
Moreover, the form $\sqrt {F_4}$ (recall that $F_g$ is the product of all even theta constants) is well-defined along  the  Jacobian  locus and it has the same weight, hence we get a different proof of the following result recently  obtained by Matone and Volpato \cite{mavo}:
\begin{cor}
On  $\calJ_4$ we have the equality $\sqrt {F_4}=c\det\calD(I_4)$ for some constant $c$.
The locus $\tn^3$ can also be given by equations
$$I_4=\sqrt {F_4}=0.$$
\end{cor}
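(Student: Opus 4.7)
The plan is to view $\sqrt{F_4}$ and $\det\calD(I_4)$ as two sections of the same line bundle of modular forms of weight $34$ on the Jacobian locus $\calJ_4\subset\calA_4$ and to match their zero divisors, whence they must be proportional.

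First I would verify that both objects are defined and of weight $34$ on $\calJ_4$. Since in genus $4$ the form $F_4$ has weight $2^{g-2}(2^g+1)=68$, its square root (where defined) has weight $34$. The form $\sqrt{F_4}$ is not globally defined on $\calA_4$, but it is a classical fact of Igusa---recalled immediately before the corollary---that $F_4$ restricts to a perfect square on $\calJ_4$, so $\sqrt{F_4}$ is an honest modular form of weight $34$ on $\calJ_4$. The paragraph preceding the corollary establishes that $\det\calD(I_4)$ restricted to $\{I_4=0\}=\calJ_4$ is a modular form of weight $34=8\cdot 4+2$, so both live in the same line bundle on $\calJ_4$.

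Next I would match their zero divisors on $\calJ_4$. In genus $4$ we have $N_0'=\calJ_4$, and by Riemann's singularity theorem the divisor $\tn\cap\calJ_4$ is precisely $\tn^3$, the locus of Jacobians whose theta divisor is singular at a two-torsion point. Hence the zero divisor of $\sqrt{F_4}|_{\calJ_4}$ is $\tn^3$, with multiplicity one---the very condition making $\sqrt{F_4}$ well-defined is that $F_4$ vanishes to even order along $\calJ_4\cap\tn$. By Proposition \ref{pr1} together with the complete-intersection statement of the previous proposition, $\det\calD(I_4)|_{\calJ_4}$ also vanishes simply and exactly along $\tn^3$. Since two sections of the same line bundle on the irreducible locus $\calJ_4$ with the same reduced zero divisor differ by a nowhere vanishing regular function, and such a function on a suitable compactification of $\calJ_4$ is constant, we conclude $\sqrt{F_4}=c\det\calD(I_4)$ on $\calJ_4$. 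For the second assertion I would simply substitute this equality into the description $\tn^3=\{I_4=\det\calD(I_4)=0\}$ of the preceding proposition: once $I_4=0$ forces us onto $\calJ_4$, the equations $\sqrt{F_4}=0$ and $\det\calD(I_4)=0$ cut out the same sublocus, namely $\tn^3$.

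The main obstacle is justifying cleanly that $F_4|_{\calJ_4}$ is a perfect square and that the resulting $\sqrt{F_4}$ has simple vanishing along $\tn^3$. This reflects the classical fact that for a generic Jacobian in $\tn\cap\calJ_4$ the vanishing theta constants pair up (equivalently, $F_4$ vanishes to order exactly two along $\calJ_4\cap\tn$), and the proposal is to invoke this as a known result of Igusa rather than reprove it; the matching transversality for $\det\calD(I_4)$ is already built into the complete-intersection statement of the previous proposition.
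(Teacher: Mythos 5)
Your proposal is correct and follows essentially the same route as the paper: the authors likewise observe that $\sqrt{F_4}$ is well-defined on the Jacobian locus and has the same weight $34$ as $\det\calD(I_4)|_{\{I_4=0\}}$, and deduce proportionality from the coincidence of the zero loci with $\tn^3$ (the paper's argument is in the two sentences preceding the corollary, relying on the complete-intersection proposition just proved). You merely supply more detail on why $F_4|_{\calJ_4}$ is a perfect square with $\sqrt{F_4}$ vanishing simply along $\tn^3$, which the paper takes as classical.
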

{}
Contrary to the situation in genus 4, for higher genera we know that we have other components, see Corollary \ref{Mgin}.  This fact can also be deduced from our class computation as follows.
\begin{proof}[Proof of Proposition \ref{prop:ne}]
Recall that the statement we are proving is that at the level of effective cycles $\tn^{g-1}\subsetneq  H$ for any $g\ge 5$. We first note that the above discussion for the genus 4 case shows that the cycle-theoretic inclusion holds. Secondly, since we have computed both classes, we see that for $g\ge 5$ the class of $N_0^{g-1}$
is not equal to 3 times the class of $\tn^{g-1}$. In fact the growth orders of the degrees of these two classes are respectively
$$\deg\tn^{g-1}\sim 4^{g-4}\deg\tn^3;\qquad
\deg N_0^{g-1}\sim\frac{g!}{4!}\deg N_0^3,$$
and one would thus expect many additional components.
\end{proof}
The rest of the paper is devoted to studying the geometry for $g=5$ in detail; in this case we will be able to describe all components explicitly, and will also obtain many results describing the classical Prym geometry of the situation.

\section{Prym theta divisors and their singularities }
While for higher $g$ the geometry of the locus $H\subset \calA_g$ appears quite intricate, for $g=5$ one can use the Prym map $P: \calR_6\rightarrow \calA_5$. We begin by setting the notation and reviewing the basic facts about Prym varieties and their moduli, which will be used throughout the rest of the paper.

Let $\calR_g$ be the moduli space of pairs $(C,\eta)$ with $[C]\in\calM_g$, and $\eta$ a non-zero two-torsion point of the Jacobian $\Pic^0(C)$. We denote by $f: \tilde C \to C$ the \'etale double cover induced by $\eta$ (so the genus of $\tilde C$ is equal to $2g-1$), by
$i: \tilde C \to \tilde C$ the involution exchanging the sheets of $f$, and by $\varphi_{K_C\otimes
\eta}: C\rightarrow \PP H^0(C, K_C\otimes \eta)^{\vee}$ the Prym-canonical map. The map $\varphi_{K_C\otimes \eta}$ is an embedding if and only if $\eta\notin C_2-C_2$ (where we denote $C_k:=\Sym^k(C)$).

We recall the definition of the Prym map $P:\calR_g\rightarrow \calA_{g-1}$. Consider the norm map
$\mathrm{Nm}_f: \Pic^{2g-2}(\tilde C) \to \Pic^{2g-2}(C)$ induced by the double cover $f$. The even component of the preimage
$$
  \mathrm{Nm}_f^{-1}(K_C)^+ :=\left\lbrace L \in \Pic^{2g-2}(\tilde C): \mathrm{Nm}_f(L)=K_C, \ h^0(\tilde{C}, L) \equiv 0 \mod 2 \right\rbrace
$$
is then an abelian variety of dimension $g-1$.
Denoting by $\Theta_{\tilde C}\subset \Pic^{2g-2}(\tilde{C})$ the Riemann theta divisor, scheme-theoretically we have the following equality $\Theta_{\tilde C}|_{\mathrm{Nm}_f^{-1}(K_C)^+} = 2\Xi$, where $\Xi$ is a principal polarization.  The {\em Prym variety} is defined to be the ppav
$$
 P(C,\eta):=\left(\mathrm{Nm}_f^{-1}(K_C)^+, \Xi\right)\in \calA_{g-1}.
$$
The polarization divisor can be described explicitly following \cite{Mumfordprym}:
$$
  \Xi(C, \eta) :=\{L\in \mathrm{Nm}_f^{-1}(K_C)^+: h^0(\tilde{C}, L)>0\}.
$$
A key role in what follows is played by the \emph{Prym-Petri} map
$$
  \mu^-_L: \wedge^2 H^0(\tilde C, L) \to H^0(C, K_C\otimes \eta),\quad u\wedge v\mapsto u\cdot i^*(v)-v\cdot i^*(u),
$$
where one makes the usual identification $H^0(C, K_C\otimes \eta)=H^0(\tilde{C}, K_{\tilde{C}})^-$ with the $(-1)$ eigenspace under the involution $i$. Following \cite{We85}, for $(C, \eta)\in \calR_g$ and $r\geq -1$,  we define the determinantal locus
$$
  V_r(C, \eta):=\{L\in \mathrm{Nm}_f^{-1}(K_C): h^0(L)\geq r+1, \ h^0(L)\equiv r+1 \mod 2\}.
$$
For a general Prym curve $(C, \eta)\in \calR_g$,  the map $\mu^-_L$ is injective for every $L\in \mathrm{Nm}_f^{-1}(K_C)$, and $\dim V_r(C, \eta)=g-1-{r+1\choose 2}$, see \cite{We85}.

For a point $L \in \Xi$, we recall the description of the tangent cone $TC_{L}(\Xi)$. Suppose $h^0(\tilde C, L) = 2m\geq 2$ and we fix a basis
$\{s_1 \dots, s_{2m}\}$ of $H^0(\tilde C, L)$. Consider the skew-symmetric matrix
$$
  M_{L} := \left(\mu^-_L(s_k\wedge s_j)\right)_{1\leq k , j \leq 2m}
$$
and the pfaffian
$\op{Pf}(L) := \sqrt {\det(M_{L})}\in \Sym^m H^0(C, K_C\otimes \eta)$. Via the identification $T_L(P(C, \eta))=H^0(C, K_C\otimes \eta)^{\vee}$ we have the following result of \cite{Mumfordprym}:
\begin{thm} If $h^0(\tilde C, L) =2$ then $\mathrm{Pf}(L) = 0$ is the equation  of the projectivized tangent space $\PP T_L(\Xi)$. If  $m \geq 2$ then $L \in \Sing(\Xi)$ and either $\mathrm{Pf}(L)\equiv 0$, in which case $\mathrm{mult}_L(\Xi)\geq m+1$, or else, $\mathrm{Pf}(L)=0$ is the equation of the tangent cone $\PP TC_L(\Xi)$.
\end{thm}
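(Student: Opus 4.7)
My plan is to deduce the statement from Kempf's refinement of the Riemann Singularity Theorem on $J(\tilde C)$, combined with the scheme-theoretic equality $\Theta_{\tilde C}|_{P(C,\eta)}=2\Xi$ and the $i^*$-eigenspace decomposition $H^0(K_{\tilde C})=H^0(K_C)\oplus H^0(K_C\otimes\eta)$.

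First I would invoke Kempf's theorem: for $L\in\Theta_{\tilde C}$ with $h^0(\tilde C,L)=2m$, one has $\mathrm{mult}_L\Theta_{\tilde C}=2m$ and the projectivized tangent cone in $\mathbb{P}T_L J(\tilde C)$ is cut out by the determinant of the $2m\times 2m$ multiplication matrix $(\mu_L(s_\alpha\otimes t_\beta))$ with entries in $H^0(K_{\tilde C})=T_L J(\tilde C)^\vee$. The hypothesis $\mathrm{Nm}_f(L)=K_C$ provides $i^*L\cong K_{\tilde C}\otimes L^{-1}$, so I may take $t_\beta:=i^*s_\beta$, turning the matrix into $N_L=(s_\alpha\cdot i^*s_\beta)_{\alpha,\beta}$. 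Under the identification $T_L P(C,\eta)^\vee=H^0(K_C\otimes\eta)\subset H^0(K_{\tilde C})$, restricting the Kempf polynomial from $\mathrm{Sym}^{2m}H^0(K_{\tilde C})$ to $T_L P(C,\eta)$ replaces each entry of $N_L$ by its $i^*$-antiinvariant component $\tfrac12\mu_L^-(s_\alpha\wedge s_\beta)$, yielding (up to a non-zero constant) the skew-symmetric matrix $\tfrac12 M_L$ of the statement. For a $2m\times 2m$ skew matrix $\det$ equals $\mathrm{Pf}^2$, so the restricted tangent cone polynomial is a scalar multiple of $\mathrm{Pf}(L)^2$.

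Finally I would extract multiplicities and equations from $\Theta_{\tilde C}|_{P(C,\eta)}=2\Xi$: if $f,g$ are local equations of $\Theta_{\tilde C}$ and $\Xi$ at $L$, then $f|_P=u\,g^2$ with $u$ a unit, so comparing initial forms gives that the degree-$m$ form $\mathrm{Pf}(L)$, whenever non-zero, coincides up to scalar with the initial form of $g$. This yields simultaneously $\mathrm{mult}_L\Xi=m$ and $\mathbb{P}TC_L\Xi=\{\mathrm{Pf}(L)=0\}$; in particular, for $m=1$ the Pfaffian is linear and defines the tangent hyperplane, while for $m\geq 2$ it forces $L\in\mathrm{Sing}\Xi$. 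If $\mathrm{Pf}(L)\equiv 0$ then the initial form of $f|_P$ vanishes, so $2\,\mathrm{mult}_L(g)=\mathrm{mult}_L(f|_P)>2m$, i.e.\ $\mathrm{mult}_L\Xi\geq m+1$. The main delicacy is the restriction step: one must verify that Kempf's identification of the tangent cone is genuinely $i^*$-equivariant, so that the $i^*$-invariant components of the entries of $N_L$ really lie in the conormal direction to $T_L P$ and drop out cleanly, ensuring that the degree-$m$ Pfaffian actually arises as the initial form of $g$ rather than as a square root extracted only after absorbing extra factors contributed by the symmetric part.
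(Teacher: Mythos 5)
The paper does not prove this statement at all: it is quoted verbatim as a theorem of Mumford from \emph{Prym varieties I}, with the citation standing in for the proof. Your argument is correct and is, in essence, the classical proof. The chain Riemann--Kempf on $J(\tilde C)$ $\rightarrow$ take $t_\beta=i^*s_\beta$ using $i^*L\cong K_{\tilde C}\otimes L^{-1}$ (which follows from $\mathrm{Nm}_f(L)=K_C$ and $K_{\tilde C}=f^*K_C$) $\rightarrow$ restrict to the $(-1)$-eigenspace $\rightarrow$ the symmetric part of each entry dies, leaving $\tfrac12 M_L$ $\rightarrow$ $\det=\mathrm{Pf}^2$ $\rightarrow$ extract the square root via $\Theta_{\tilde C}|_{P(C,\eta)}=2\Xi$ is exactly how the result is established in the literature. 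Two small remarks. First, the ``delicacy'' you flag at the end is not actually an issue: an element of $H^0(K_{\tilde C})$ restricted to the antiinvariant subspace $T_LP(C,\eta)\subset T_LJ(\tilde C)$ automatically annihilates its $i^*$-invariant component, by the eigenspace decomposition; no equivariance of Kempf's construction is needed beyond the elementary fact that restricting the determinant polynomial to a subspace is the same as restricting each linear entry. The genuine point of care is the one you do handle correctly, namely that the initial form of $f|_P$ equals the restriction of the initial form of $f$ only when that restriction is nonzero, which is precisely what separates the two alternatives of the statement (and, in the case $\mathrm{Pf}(L)\equiv 0$, gives $2\,\mathrm{mult}_L(\Xi)>2m$, hence $\mathrm{mult}_L(\Xi)\geq m+1$ by integrality). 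Second, in the $m=1$ clause you should note that the conclusion is only meaningful when $\mathrm{Pf}(L)\neq 0$; the paper itself points out immediately after the theorem that one can have $L\in\Sing(\Xi)$ with $m=1$ and $\mathrm{Pf}(L)\equiv 0$, in which case there is no tangent hyperplane to define.
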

Note that one can have $L\in \Sing(\Xi)$ even when $m = 1$ and $\op{Pf}(L)$ is identically zero, so that the Prym theta divisor $\Xi$ can have two types of singularities, as follows:
\begin{df} For a point $L \in \Sing(\Xi)$, one says that \par \noindent
(1) $L$ is a \emph{stable} singularity if $h^0(\tilde C, L) = 2m \geq 4$, \par \noindent
(2) $L$ is an \emph{exceptional} singularity if $L=f^*(M)\otimes \calO_{\tilde{C}}(B)$, where $M\in \mathrm{Pic}(C)$ is a line bundle with $h^0(C, M) \geq 2$ and $B$ is an effective divisor on $C$.
\end{df} \noindent
Let $\Sing^{\mathrm{st}}_f(\Xi)=V_3(C, \eta)$ be the locus of stable singularities and $\Sing^{\mathrm{ex}}_f(\Xi)$  the locus{} of exceptional singularities.  Clearly
$\Sing(\Xi)=\Sing^{\mathrm{st}}_f(\Xi)\cup \Sing^{\mathrm{ex}}_f(\Xi)$. Both these notions depend on the \'etale double cover $f:\tilde{C}\rightarrow C$  and are not intrinsic to $\Xi$. Furthermore, there can be singularities that are simultaneously stable and exceptional. Every singularity of a $4$-dimensional theta divisor $\Xi$ can in fact be realized as both a stable and an exceptional singularity in different incarnations of $(A, \Xi)\in \calA_5$ as a Prym variety.

For a decomposable vector $0\neq u \wedge v \in \wedge^2 H^0(\tilde{C}, L)$, we set
$$
\op{div}(u) := D_u + B,\qquad\op{div}(v) := D_v + B,
$$
where $D_u, D_v$ have no common components and $B\geq 0$ is an effective divisor on $C$.
The next lemma is well known, see \cite[Appendix C]{ACGH}:
\begin{lm}\label{excsing1} For $0\neq u \wedge v\in \wedge^2 H^0(\tilde C, L)$ the  following are equivalent.
\begin{enumerate}
\item $\mu_L^-(u \wedge v)=0.$
\item $D_u, D_v \in |f^*M|$ where $M \in \Pic(C)$ with $h^0(C, M)\geq 2$.
\end{enumerate}
\end{lm}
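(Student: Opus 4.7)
The central idea behind the proof is that, since $\mathrm{Nm}_f(L)=K_C$, the line bundle $L\otimes i^*L$ equals $f^*K_C$. Thus $u\cdot i^*(v)$ and $v\cdot i^*(u)$ are both sections of $f^*K_C$, and $\mu^-_L(u\wedge v)$ is precisely their $i$-antiinvariant difference; under $f_*$, the $i$-invariant and $i$-antiinvariant subspaces of $H^0(\tilde C,f^*K_C)$ correspond to $H^0(C,K_C)$ and $H^0(C,K_C\otimes\eta)$ respectively, which is exactly how $\mu^-_L$ takes values in $H^0(C,K_C\otimes\eta)$.

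For the direction $(1)\Rightarrow(2)$, I would translate $\mu^-_L(u\wedge v)=0$ into the equality $u\cdot i^*(v)=v\cdot i^*(u)$ of sections of $f^*K_C$, and then equate divisors:
\[
 D_u+i^*D_v+B+i^*B=D_v+i^*D_u+B+i^*B,
\]
which gives $D_u-D_v=i^*(D_u-D_v)$. Since $D_u$ and $D_v$ share no components, the positive and negative parts of $D_u-D_v$ are precisely $D_u$ and $D_v$, forcing each of them to be $i$-invariant. As $f$ is \'etale, every $i$-invariant effective divisor on $\tilde C$ is a pullback, so $D_u=f^*E_u$ and $D_v=f^*E_v$ for effective $E_u,E_v$ on $C$. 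Linear equivalence $D_u\sim D_v$ on $\tilde C$ then gives $f^*(E_u-E_v)\sim 0$; since $\ker f^*=\{\calO_C,\eta\}$, either $E_u\sim E_v$ or $E_u\sim E_v+\eta$ holds on $C$.

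The main technical step is eliminating the twisted alternative $E_u\sim E_v+\eta$. Setting $M:=\calO_C(E_u)$ and writing $u=s\cdot f^*\alpha$ with $\alpha\in H^0(C,M)$ having divisor $E_u$ and $\op{div}(s)=B$, the section $v/s\in H^0(\tilde C,f^*M)$ would in that case lie in the $(-1)$-eigenspace of $i^*$: its divisor $f^*E_v$ is $i$-invariant (so $i^*(v/s)=\pm v/s$), while the isomorphism class of $E_v$ corresponds to the $\eta$-twisted summand of the decomposition $H^0(\tilde C,f^*M)=f^*H^0(C,M)\oplus f^*H^0(C,M\otimes\eta)$. Writing $v=s\cdot\gamma$ with $i^*\gamma=-\gamma$ and expanding gives
\[
 \mu^-_L(u\wedge v)=-2\,(s\cdot i^*s)\cdot f^*\alpha\cdot\gamma\neq 0,
\]
contradicting the hypothesis. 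Hence $E_u\sim E_v$, and since $u\wedge v\neq 0$ forces $E_u\neq E_v$ as divisors, the pencil spanned by them inside $|M|$ yields $h^0(C,M)\geq 2$, establishing (2).

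The converse $(2)\Rightarrow(1)$ is a short direct computation: given $D_u=f^*E_u$ and $D_v=f^*E_v$ with $E_u,E_v\in|M|$, pick $\alpha,\beta\in H^0(C,M)$ of divisors $E_u, E_v$ and write $u=s\cdot f^*\alpha$, $v=s\cdot f^*\beta$; both $f^*\alpha$ and $f^*\beta$ are $i$-invariant, so $u\cdot i^*(v)=(s\cdot i^*s)\cdot f^*(\alpha\beta)=v\cdot i^*(u)$, giving $\mu^-_L(u\wedge v)=0$. I expect the only non-mechanical part of the whole argument to be the eigenspace bookkeeping required to rule out the twisted case; the rest is divisor bookkeeping together with the elementary description of $\ker f^*$.
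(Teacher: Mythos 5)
Your proof is correct. Note that the paper offers no argument of its own for this lemma --- it simply cites \cite[Appendix C]{ACGH} --- so there is nothing internal to compare against; what you have written is a complete proof of the standard fact. Both directions check: the divisor identity $D_u-D_v=i^*(D_u-D_v)$ plus disjointness of supports does force $D_u$ and $D_v$ to be $i$-invariant, hence pullbacks since $f$ is \'etale and $i$ acts freely; and your elimination of the twisted alternative $E_u\sim E_v\otimes\eta$ via $\mu^-_L(u\wedge v)=-2(s\cdot i^*s)\,f^*\alpha\cdot\gamma\neq 0$ is sound, since a product of nonzero sections on the irreducible curve $\tilde C$ is nonzero. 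The one structural difference from the classical argument (Mumford's \emph{Prym varieties I}, reproduced in ACGH) is that the latter is more direct: $\mu^-_L(u\wedge v)=0$ says precisely that the rational function $u/v$ on $\tilde C$ is $i$-invariant, hence equals $f^*h$ for a rational function $h$ on $C$, whence $D_u-D_v=f^*(\mathrm{div}\,h)$ immediately yields $D_u=f^*E_u$, $D_v=f^*E_v$ with $E_u\sim E_v$ \emph{on $C$}; the detour through $\ker f^*=\{\calO_C,\eta\}$ and the eigenspace computation never arises. Your route costs an extra case but loses nothing in rigor. One last point: you are (correctly) reading condition (2) as $D_u,D_v\in f^*|M|$, i.e.\ as pullbacks of members of $|M|$; under the literal reading of $|f^*M|$ as the full linear system on $\tilde C$ the implication $(2)\Rightarrow(1)$ would fail, and the sentence following the lemma in the paper confirms that the pullback reading is the intended one.
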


\noindent In such a case we write $L=f^*(M)\otimes \calO_{\tilde{C}}(B)$, hence $K_C=M^{\otimes 2}\otimes \calO_C(f_*(B))$, in particular $h^0(C, K_C \otimes M^{\otimes (-2)}) \geq 1$, and the Petri map $\mu_0(M)$ is not injective.  In particular, $\Sing_f^{\mathrm{ex}}(\Xi)=\emptyset$ if $C$ satisfied the Petri theorem.

\vskip 3pt
Suppose $L\in V_3(C, \eta)$ is a \emph{quadratic stable singularity}, hence $h^0(\tilde C, L) = 4$ and
$\mathrm{Pf}(L) \neq 0$. Setting $\PP^5 := \PP(\wedge^2 H^0(L)^{\vee})$ and $\PP^{g-2} := \PP(H^0(K_C \otimes \eta)^{\vee})$, we consider the
projectivized dual of the Prym-Petri map
$$
\delta:=\PP\left((\mu^{-}_L)^{\vee}\right): \PP^{g-2} \to \PP^5.
$$
The Pl\"ucker embedding of the Grassmannian $\GG^* := G(2, H^0(L)^{\vee})\subset \PP^5$  is a rank $6$ quadric whose preimage $Q_L:=\delta^{-1}(\GG^*)$ is defined precisely by the Pfaffian $\mathrm{Pf}(L)$.  Note also that
$\rk(Q_{L}) \leq \rk(\mu^-_L).$
On the other hand let $\GG := G(2, H^0(L)) \subset \PP(\wedge^2 H^0(L))$ be the dual Grassmannian. It is again
a standard exercise in linear algebra to show the equivalence
$$
\rk(Q_{L}) \leq 4 \Longleftrightarrow \GG \cap \PP\left(\Ker(\mu^-_L)\right) \neq \emptyset.
$$
For a point $L\in \Sing^{\mathrm{st}}_f(\Xi)$ one has $\op{mult}_L(\Xi)=2$
 if and only if  $h^0(C, M) \leq 2$ for any line bundle $M$ on $C$ such that
$h^0(\tilde C, L \otimes f^* M^{\vee}) \geq 1$, see \cite{SV04}. We summarize this discussion as follows:

\begin{prop}\label{rk4sing} For a quadratic singularity $L\in \mathrm{Sing}_f^{\mathrm{st}}(\Xi)$ the following conditions are equivalent:
\begin{enumerate}
\item $\rk(Q_{L})\leq 4$.
\item $\GG\cap\PP\left(\Ker(\mu^-_L)\right) \neq \emptyset$.
\item $L \in \Sing^{\mathrm{st}}_{f}(\Xi) \cap \Sing^{\mathrm{ex}}_{f}(\Xi)$.
\end{enumerate}
\end{prop}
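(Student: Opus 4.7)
The strategy is to split the three-way equivalence into two pieces: the equivalence (2)$\Leftrightarrow$(3) will follow essentially from Lemma \ref{excsing1}, while (1)$\Leftrightarrow$(2) will be reduced to a rank identity for the restriction of the Plücker quadric to a linear subspace. I will handle the first equivalence first, as it is mostly a definition-chase, and then address the linear-algebraic part.

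For (2)$\Leftrightarrow$(3): a nonzero point of $\GG\cap\PP(\Ker(\mu^-_L))$ is by definition a decomposable vector $0\ne u\wedge v\in \wedge^2 H^0(\tilde C, L)$ annihilated by $\mu^-_L$. Lemma \ref{excsing1} immediately identifies this condition with the existence of $M\in\op{Pic}(C)$ satisfying $h^0(C, M)\ge 2$ and $D_u, D_v\in |f^*M|$; writing $L = f^*(M)\otimes \calO_{\tilde C}(B)$ with $B\ge 0$, this is exactly the definition of an exceptional singularity, giving the forward direction. Conversely, given such a presentation of $L$, picking two independent sections $a, b\in H^0(C, M)$ and a section $s_B$ cutting out $B$ yields the decomposable kernel element $f^*(a)s_B\wedge f^*(b)s_B$, via the same lemma.

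For (1)$\Leftrightarrow$(2): write $U:=H^0(\tilde C, L)$, so $\dim U=4$, and let $\mathrm{Pl}$ be the Plücker bilinear form on $\wedge^2 U$, whose associated (smooth) quadric has zero locus $\GG=G(2, U)$. By construction $Q_L$ is the pullback along $(\mu^-_L)^\vee$ of the Plücker quadric on $\wedge^2 U^*$, and since the image of $(\mu^-_L)^\vee$ equals the annihilator $K^\perp\subset\wedge^2 U^*$ of $K:=\Ker(\mu^-_L)$, identifying $\wedge^2 U^*\cong \wedge^2 U$ via $\mathrm{Pl}$ (which is self-dual and non-degenerate) gives
$$\rk(Q_L)\;=\;\rk\bigl(\mathrm{Pl}\bigr|_{K^{\perp_{\mathrm{Pl}}}}\bigr)\;=\;(6-k)-\dim\bigl(K\cap K^{\perp_{\mathrm{Pl}}}\bigr),$$
where $k:=\dim K$, and I use the standard rank formula together with the double-perp identity $(K^{\perp_{\mathrm{Pl}}})^{\perp_{\mathrm{Pl}}}=K$. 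A short case analysis in $k$ finishes the equivalence: for $k=0$ both (1) and (2) fail, since $\rk Q_L=6$ and $\PP(K)=\emptyset$; for $k=1$, writing $K=\langle\omega\rangle$, one has $K\cap K^{\perp_{\mathrm{Pl}}}=K$ or $0$ according as $\omega$ is or is not decomposable, so $\rk Q_L\in\{4, 5\}$ and (1) and (2) match; and for $k\ge 2$, both conditions hold automatically, since $\rk Q_L\le 6-k\le 4$ and since every projective line in $\PP^5$ meets the quadric hypersurface $\GG$ over $\mathbb{C}$.

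The only subtle point is the rank identity above: one must carefully identify $\wedge^2 U$ with its dual via the self-dual Plücker form so that the image of $(\mu^-_L)^\vee$ corresponds to $K^{\perp_{\mathrm{Pl}}}$. Once this is in place, the case analysis is elementary, and the standing hypothesis $\mathrm{Pf}(L)\ne 0$ (the \emph{quadratic} part of ``quadratic stable singularity'') is used only implicitly to exclude the pathology where $Q_L\equiv 0$ as a quadratic form.
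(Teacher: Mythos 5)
Your proof is correct and follows essentially the same route as the paper: the equivalence of (2) and (3) via Lemma \ref{excsing1}, and the identification of $Q_L$ with the pullback of the Pl\"ucker quadric under the dual Prym--Petri map. The paper dismisses the equivalence $\rk(Q_L)\leq 4 \Leftrightarrow \GG\cap\PP(\Ker(\mu^-_L))\neq\emptyset$ as ``a standard exercise in linear algebra''; your rank formula $\rk(Q_L)=(6-k)-\dim\bigl(K\cap K^{\perp_{\mathrm{Pl}}}\bigr)$ and the case analysis in $k$ correctly carry out that exercise.
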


\section{Petri divisors and the Prym map in genus $6$}
This section is devoted to the study of singularities of Prym theta divisors of dimension $4$ via
the Prym map
$P:\calR_6 \to \calA_5$.

We review a few facts about the Deligne-Mumford compactification $\rr_g$ of $\calR_g$, and refer to \cite{dofibers} and \cite{falu} for details. The space $\rr_g$ is the coarse moduli space associated to the Deligne-Mumford stack $\overline{\bf{R}}_g$ of stable Prym curves of genus $g$. The geometric points of $\rr_g$ correspond to triples $(X, \eta, \beta)$, where $X$ is a quasi-stable curve with $p_a(X)=g$, $\eta\in \Pic(X)$ is a line bundle of total degree $0$ on $X$ such that $\eta_{E}=\calO_E(1)$ for each smooth rational component $E\subset X$ with $|E\cap \overline{X-E}|=2$ (such a component is called exceptional), and $\beta:\eta^{\otimes 2}\rightarrow \calO_X$ is a sheaf homomorphism whose restriction to any non-exceptional component is an isomorphism. Denoting $\pi:\rr_g\rightarrow \mm_g$ the forgetful map, one has the formula \cite[Example 1.4]{falu}
\begin{equation}\label{pullbackrg}
\pi^*(\delta_0)=\delta_0^{'}+\delta_0^{''}+2\delta_{0}^{\mathrm{ram}}\in CH^1(\rr_g),
\end{equation}
where $\delta_0^{'}:=[\Delta_0^{'}], \, \delta_0^{''}:=[\Delta_0^{''}]$, and $\delta_0^{\mathrm{ram}}:=[\Delta_0^{\mathrm{ram}}]$ are boundary divisor classes on $\rr_g$ whose meaning we recall. Let us fix a general point $[C_{xy}]\in \Delta_0$ corresponding to a smooth $2$-pointed curve $(C, x, y)$ of genus $g-1$ and the normalization map $\nu:C\rightarrow C_{xy}$, where $\nu(x)=\nu(y)$. A general point of $\Delta_0^{'}$ (respectively of $\Delta_0^{''}$) corresponds to a stable Prym curve $[C_{xy}, \eta]$, where $\eta\in \Pic^0(C_{xy})[2]$ and $\nu^*(\eta)\in \Pic^0(C)$ is non-trivial
(respectively, $\nu^*(\eta)=\calO_C$). A general point of $\Delta_{0}^{\mathrm{ram}}$ is of the form $(X, \eta)$, where $X:=C\cup_{\{x, y\}} \PP^1$ is a quasi-stable curve with $p_a(X)=g$, whereas $\eta\in \Pic^0(X)$ is a line bundle characterized by $\eta_{\PP^1}=\calO_{\PP^1}(1)$ and $\eta_C^{\otimes 2}=\calO_C(-x-y)$.

For $1\leq i\leq [\frac{g}{2}]$ we have a splitting of the pull-back of the boundary
\begin{equation}
\pi^*(\delta_i)=\delta_i+\delta_{g-i}+\delta_{i:g-i}\in CH^1(\rr_g),
\end{equation}
where the boundary classes $\delta_i:=[\Delta_i], \delta_{g-i}:=[\Delta_{g-i}]$ and $\delta_{i:g-i}:=[\Delta_{i:g-i}]$ correspond to the possibilities of choosing a pair of two-torsion line bundles on a smooth curve of genus $i$ and
one of genus $g-i$, such that the first one, the second one, or neither of the corresponding bundles is trivial, respectively, see \cite{falu}.

Often we content ourselves with working on the partial compactification $\widetilde{\calR}_g:=\pi^{-1}(\calM_g\cup \Delta_0)$ of $\calR_g$.
When there is no danger of confusion, we still denote by $\delta_0^{''}, \delta_0^{''}$ and $\delta_0^{\mathrm{ram}}$ the restrictions of the corresponding boundary classes to $\pr_g$. Note that $CH^1(\pr_g)={\bf Q}\langle \lambda, \delta_0^{'}, \delta_0^{''}, \delta_0^{\mathrm{ram}}\rangle$.

\vskip 3pt
The extension of the (rational) Prym map $P:\rr_g\dashrightarrow \overline{\mathcal{A}}_{g-1}$  over the general point of each of the boundary divisors of $\rr_g$ is well-understood, see e.g.~\cite{dofibers}. The Prym map contracts $\Delta_0^{''}$ and all boundary divisors $\pi^*(\Delta_i)$ for $1\leq i\leq \lfloor\frac{g}{2}\rfloor$. The Prym variety corresponding to a general point $[C_{xy}, \eta]\in \Delta_0^{''}$ as above is the Jacobian $\Jac(C)$ of the normalization. Thus $P(\Delta_0^{''})=\mathcal{J}_{g-1}$. The pullback map $P^*$ on divisors has recently been described in \cite{grsmprym}: one has
\begin{equation}\label{prympull}
P^*(\lambda_1)=\lambda-\frac{\delta_0^{\mathrm{ram}}}{4},\quad P^*(D)=\delta_0^{'}.
\end{equation}
\begin{rem}
We sketch an alternative way of deriving the first formula in (\ref{prympull}). For each $(C, \eta)\in \calR_g$, there is a canonical identification of vector bundles $T_{P(C, \eta)}^{\vee}=H^0(C, K_C\otimes \eta)\otimes \calO_{P(C, \eta)}$. The pull-back $P^*(\mathbb E)$ of the Hodge bundle can be identified with the vector bundle $\calN_1$ on  $\rr_g$ with fiber $\calN_1(C, \eta)=H^0(C, \omega_C\otimes \eta)$, over each point
$(C, \eta)\in \rr_g$ (we skip details showing that this description carries over the boundary as well). Therefore $P^*(\lambda_1)=c_1(\calN_1)=\lambda-\frac{1}{4}\delta_0^{\mathrm{ram}}$, where we refer to \cite{falu} for the last formula.
\end{rem}

We have seen that for $[\tilde{C}\stackrel{f}\rightarrow C]\in \calR_g$  with $\Sing^{\mathrm{ex}}_f(\Xi)\neq \emptyset$,
the curve $C$ fails the Petri theorem. Let $\mathcal{GP}_{g, k}^1  \subset \mathcal M_g$
denote the \emph{Gieseker-Petri} locus whose general element is a curve $C$ carrying a globally generated pencil
$M \in W^1_k(C)$ with $h^0(C, M) = 2$, such that the multiplication map
$$\mu_0(M): H^0(C, M) \otimes H^0(C, K_C \otimes M^{\vee}) \to H^0(C, K_C)$$ is not injective.
It is proved in \cite{fapetri} that for $\frac{g+2}{2}\leq k\leq g-1$, the locus $\mathcal{GP}_{g, k}^1$ has a divisorial component.
As usual, we denote by $\calM_{g, d}^r$ the locus of curves $[C]\in \calM_g$ such that $W^r_d(C)\neq \emptyset$.

In the case of $\calM_6$ there are two Gieseker-Petri loci, both irreducible of pure codimension $1$, {}described as follows:{}

\noindent
$\bullet$ The locus $\mathcal{GP}_{6,4}^1$ of curves $[C]\in \calM_6$ having a pencil $M\in W^1_4(C)$ with $h^0(C, K_C \otimes M^{\otimes (-2)})\geq 1$. We have the following formula for the class of its closure in $\mm_6$, see \cite{EH87}:
$$
  [\overline{\mathcal{GP}}_{6, 4}^1]=94\lambda-12\delta_0-50\delta_1-78\delta_2-88\delta_3\in CH^1(\mm_6).
$$
$\bullet$ The locus $\mathcal{GP}_{6,5}^1$ of curves with a vanishing theta characteristic; then
$$
  [\overline{\mathcal{GP}}_{6, 5}^1]= 8\left(65\lambda-8\delta_0-31\delta_1-45\delta_2-49\delta_3\right)\in CH^1(\mm_6).
$$

The Prym map $P:\calR_6\rightarrow \calA_5$ is dominant of degree 27 and its Galois group equals the Weyl group of $E_6$, see \cite{dosm},\cite{dofibers}. The differential of the Prym map at the level of stacks
$$(dP)_{(C, \eta)}: H^0(C, K_C^{\otimes 2})^{\vee}\rightarrow \left(\Sym^2 H^0(C, K_C\otimes \eta)\right)^{\vee}$$ is the dual of the multiplication map at the level of global sections for the Prym-canonical map $\varphi_{K_C\otimes \eta}$. Thus the ramification divisor of $P$ is a Cartier divisor on $\calR_6$ supported on the locus
$$\calQ:=\left\{(C, \eta)\in \calR_6:\Sym^2 H^0(C, K_C\otimes \eta)\stackrel{\ncong}\longrightarrow H^0(C, K_C^{\otimes 2})\right\}.$$ The closure of $P(\calQ)$ inside $\calA_5$ is the branch divisor of $P$. At a general point $(A, \Theta) \in P(\calQ)$ the fiber of $P$ has the structure of the set of lines on a one-nodal cubic surface, that is, $P^{-1}(A, \Theta)\cap \calQ$ consists of $6$ ramification points corresponding to the $6$ lines through the node. The remaining 15 points of $P^{-1}(A, \Theta)$ are in correspondence with the $15$ lines on the one-nodal cubic surface not passing through the node. Since $\mathrm{deg}(P)=27$ it follows that $P$ has simple ramification and $\calQ$ is reduced. Donagi \cite[p.~93]{dofibers} established that $\calQ$ is irreducible by showing that the monodromy acts transitively on a general fiber of $P_{| \calQ}$. We sketch a different proof  which uses the irreducibility of the moduli space of polarized Nikulin surfaces. We summarize these results as follows:

\begin{prop}
Set-theoretically, the branch divisor of the map $P$ is equal to the closure $N_0'$ of $P(\calQ)$ in $\calA_5$. At the level of cycles, $P_*[\calQ]=6[N_0']$.
\end{prop}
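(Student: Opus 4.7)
The plan is to treat the two claims separately, using Theorem \ref{ramantiram} together with the fibre structure of $P$ over $N_0'$ recalled in the paragraph immediately preceding the proposition.

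For the set-theoretic equality, Theorem \ref{ramantiram} identifies $\calQ$ with the locus $\{(C,\eta)\in\calR_6:V_3(C,\eta)\neq\emptyset\}$. Given such a pair, any $L\in V_3(C,\eta)$ satisfies $h^0(\tilde C,L)\ge 4$, and is therefore a stable singular point of the Prym theta divisor $\Xi$ by the Mumford--Welters theorem stated in Section~5; in particular $\Xi$ is singular and $P(C,\eta)\in N_0=\tn\cup N_0'$. Since $\calQ$ is irreducible (granted, following either Donagi's monodromy computation \cite{dofibers} or the Nikulin-surface parametrization) and has dimension $\dim\calR_6-1=14$, the closure $\overline{P(\calQ)}$ is an irreducible $14$-dimensional closed subset of $\calA_5$ contained in $N_0$, hence it coincides with one of the two irreducible components $\tn$ or $N_0'$. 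To exclude $\tn$ it suffices to exhibit one member $(C,\eta)\in\calQ$ whose stable singular point $L$ of $\Xi$ is not a two-torsion point of $P(C,\eta)$; since the two-torsion points form a finite subscheme of each Prym while the family $\calQ$ is $14$-dimensional, this is a general-position condition and is automatic. We conclude $\overline{P(\calQ)}=N_0'$.

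For the cycle-theoretic identity we use the description of the fibres of $P$ over $N_0'$ recalled in the excerpt. Over a general point $(A,\Theta)\in N_0'$ the fibre $P^{-1}(A,\Theta)$ is in bijection with the $27$ lines on an irreducible one-nodal cubic surface, decomposing into the $6$ lines through the node, each of which is a simply ramified preimage of $P$, and the $15$ lines disjoint from the node, each of which is an unramified preimage; the counts are consistent with $\deg P=2\cdot 6+15=27$. Consequently $P^{-1}(A,\Theta)\cap\calQ$ consists of exactly $6$ reduced points, so the restricted map $P|_\calQ\colon\calQ\to N_0'$ is generically finite of degree $6$. Combined with the reducedness of $\calQ$ noted in the excerpt, this yields $P_*[\calQ]=6\,[N_0']$ as cycles on $\calA_5$.

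The main obstacle is really the irreducibility of $\calQ$: once it is granted, both assertions reduce to standard inputs, namely the Mumford--Welters identification of stable singularities with $V_3$ and the classical geometry of the $27$ lines degenerating on a one-nodal cubic surface. The excerpt indicates two independent routes to this irreducibility (Donagi's transitivity of monodromy on ramification points, or the realization of general $(C,\eta)\in\calQ$ as sections of polarized Nikulin $K3$ surfaces as in Theorem \ref{qparametrisierung} and \cite{faveni}), and either suffices; we are free to appeal to whichever we prefer without redoing those calculations.
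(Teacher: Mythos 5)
Your second claim (the degree-$6$ cycle identity) is handled exactly as in the paper: the Donagi--Smith description of a general fibre of $P$ over $N_0'$ as the $27$ lines on a one-nodal cubic surface, with the $6$ lines through the node giving the ramification points, plus the reducedness of $\calQ$, yields $P_*[\calQ]=6[N_0']$. The paper offers no further argument there, so that part is fine.

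The set-theoretic part, however, contains a genuine gap. To rule out $\overline{P(\calQ)}=\tn$ you argue that ``the two-torsion points form a finite subscheme of each Prym while the family $\calQ$ is $14$-dimensional,'' so that landing on a two-torsion point is a general-position accident. This is a non-sequitur: the condition that the singularity of $\Theta$ sit at a two-torsion point cuts out the divisor $\tn\subset\calA_5$, which is itself $14$-dimensional, so a $14$-dimensional irreducible family of singular ppav can perfectly well lie entirely inside it --- indeed $\Sn$ is exactly such a $14$-dimensional family dominating $\tn$, and your dimension count would ``disprove'' its existence. Excluding $\tn$ requires an actual input, e.g.\ the characterization of symmetric points of $\Xi$ as $L=f^*(\theta)$ for a theta characteristic $\theta$ (used later in the paper in the proof of Proposition \ref{q5par}), which shows that a stable singularity at a two-torsion point forces a vanishing theta-null on $C$, a divisorial condition on $\calM_6$ that a general point of $\calQ$ avoids. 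A second, smaller, unjustified step is the assertion that $\overline{P(\calQ)}$ is $14$-dimensional: this presupposes that $P|_{\calQ}$ is generically finite, which is not automatic, since the Prym map does contract divisors in $\rr_6$ and even contracts the codimension-two locus $\calQ_3\subset\calR_6$ onto $\calJ_5$. Both gaps are closed simultaneously by the very fibre description you invoke in the second half (the branch divisor of $P$ is precisely $N_0'$, with $6$ simple ramification points in each general fibre over it), which is the route the paper takes; if you want an argument independent of that citation, the exclusion of $\tn$ must be made honest along the lines above.
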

We turn our attention to the geometry of $\calQ$. First we compute the class of its closure in $\pr_6$, then we link it to Prym-Brill-Noether theory:

\begin{thm}\label{Qclass}
The ramification divisor $\calQ\subset\calR_6$ is irreducible.  The class of its closure $\widetilde{\calQ}$ in $\pr_6$ equals
$$[\widetilde{\calQ}]= 7\lambda-\delta_0^{'}-\frac{3}{2}\delta_0^{\mathrm{ram}}-c_{\delta_0^{''}}\delta_0^{''}\in CH^1(\pr_6),$$
where we have the estimate $c_{\delta_0^{''}}\geq 4.$\end{thm}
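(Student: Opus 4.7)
The plan is to derive the class formula by pulling back the branch divisor $\overline{N_0'}$ under $P$ and solving for $[\widetilde{\calQ}]$. For irreducibility, I would cite Donagi's computation \cite{dofibers} that the monodromy of the degree-$27$ Prym map $P$ is $W(E_6)$, which acts transitively on the six ramification points in each general fiber (the lines through the node of a one-nodal cubic surface); alternatively, the Nikulin-surface realization of $\calQ$ established later in the paper provides a unirational parametrization by the irreducible moduli of genus-$6$ polarized Nikulin surfaces. Reducedness is then immediate from the simplicity of the ramification of $P$ along $\calQ$ already observed in the preceding proposition.

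For the class, the antiramification relation $P^*(N_0')=2\calQ+\calU$ on $\calR_6$ extends over the partial compactification $\pr_6$ to
$$
P^*(\overline{N_0'}) \;=\; 2\widetilde{\calQ} \;+\; \widetilde{\calU} \;+\; m\,\delta_0^{''}
$$
for some non-negative integer $m$. No other boundary divisor can contribute: by \eqref{prympull} the class $\delta_0^{'}$ is absorbed into $P^*(D)$ and $\delta_0^{\mathrm{ram}}$ into $P^*(\lambda_1)=\lambda-\delta_0^{\mathrm{ram}}/4$, whereas for $\delta_0^{''}$ we have $P(\Delta_0^{''})=\overline{\calJ}_5\subset\overline{N_0'}$ because Jacobians of genus-$5$ curves have a one-dimensional theta singular locus $W^1_4(C)$ by the Riemann singularity theorem.

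Substituting the known classes now gives the formula. From $[\overline{N_0'}]=108\lambda_1-14D$ and \eqref{prympull},
$$
P^*[\overline{N_0'}] \;=\; 108\,\lambda \;-\; 27\,\delta_0^{\mathrm{ram}} \;-\; 14\,\delta_0^{'}.
$$
By Theorem \ref{ramantiram}, $\calU=\pi^*(\mathcal{GP}_{6,4}^1)$, and combining the Eisenbud--Harris class of $\overline{\mathcal{GP}}_{6,4}^1$ with $\pi^*(\delta_0)=\delta_0^{'}+\delta_0^{''}+2\delta_0^{\mathrm{ram}}$ yields
$$
[\widetilde{\calU}] \;=\; 94\,\lambda \;-\; 12\,\delta_0^{'} \;-\; 12\,\delta_0^{''} \;-\; 24\,\delta_0^{\mathrm{ram}}.
$$
Subtracting and dividing by two produces $[\widetilde{\calQ}]=7\lambda-\delta_0^{'}-\tfrac{3}{2}\delta_0^{\mathrm{ram}}-\tfrac{m-12}{2}\delta_0^{''}$, identifying $c_{\delta_0^{''}}=(m-12)/2$.

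The remaining, and genuinely hard, step is the bound $m\geq 20$, i.e.\ $c_{\delta_0^{''}}\geq 4$. This is a local multiplicity statement for $\overline{N_0'}$ along $\overline{\calJ}_5$, and I would attack it by counting the ordinary double points that persist on the Prym theta divisor $\Xi$ as one deforms away from a general $[C_{xy},\eta]\in\Delta_0^{''}$. At such a limit, $\Xi$ specializes to $W_4(C)$ for a smooth genus-$5$ curve $C$, whose singular locus is the curve $W^1_4(C)\subset\Pic^4(C)$; each point of $W^1_4(C)$ produces a singular point of $\Xi$ upon smoothing, and, after accounting for the involution $L\leftrightarrow K_{\tilde C}\otimes L^\vee$ on $\mathrm{Nm}_f^{-1}(K_C)^+$ and the identification of the two singularities $\pm x$ of a general element of $N_0'$ (which is the origin of the factor of two in formula \eqref{divisor}), one expects to count enough limit singularities to force $m\geq 20$. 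The main obstacle is thus a careful degeneration and Brill--Noether count rather than any new cycle computation.
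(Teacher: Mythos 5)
Your class computation inverts the paper's logical order and is circular as written. The paper proves Theorem \ref{Qclass} \emph{first}, by a direct degeneracy-locus argument: it introduces the rank-$5$ and rank-$15$ bundles $\calN_1$, $\calN_2$ on $\pr_6$ with fibers $H^0(X,\omega_X\otimes\eta)$ and $H^0(X,\omega_X^{\otimes 2}\otimes\eta^{\otimes 2})$, computes the first Chern class of the degeneracy locus $\mathcal Z$ of $\Sym^2\calN_1\rightarrow\calN_2$ using the formulas of \cite{falu}, and obtains $[\mathcal Z]=7\lambda-\delta_0^{'}-\delta_0^{''}-\frac{3}{2}\delta_0^{\mathrm{ram}}$; the identification $\calU=\pi^*(\mathcal{GP}^1_{6,4})$ of Theorem \ref{ramantiram} is then \emph{deduced} from this, by computing $[\calU]=P^*([N_0'])-2[\calQ]=94\lambda$ on $\calR_6$ and matching it against the Eisenbud--Harris class. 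The only input available to you independently of Theorem \ref{Qclass} is the inclusion $\pi^*(\mathcal{GP}^1_{6,4})\subseteq\calU$ coming from the parity-flipping construction of exceptional singularities; to run your argument you would need the reverse inclusion \emph{as cycles}, i.e.\ that $\calU$ has no further components (for instance supported on $\pi^*(\mathcal{GP}^1_{6,5})$, the other divisorial Gieseker--Petri locus in $\calM_6$) and that it is reduced. Neither is addressed, and without them your subtraction only yields the a priori weaker statement $2[\widetilde{\calQ}]\leq P^*[\overline{N_0'}]-\pi^*[\overline{\mathcal{GP}}^1_{6,4}]$ in the effective cone.

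The second gap is that the inequality $c_{\delta_0^{''}}\geq 4$ --- the only genuinely quantitative assertion in the statement --- is exactly what your reduction converts into the bound $m\geq 20$, and you leave that bound at the level of ``one expects.'' This is not a routine verification: in the paper the exact value $m=20$ is the content of Theorem \ref{anticlass}, proved via the blow-up of $\calA_5$ along $\calJ_5$, a Milnor-number argument, and the Euler-characteristic computation $\chi(W_4(C))=90$ giving $\op{mult}_{\calJ_5}(N_0)=120-90+10=40$. By contrast, the paper's own proof of Theorem \ref{Qclass} obtains $c_{\delta_0^{''}}\geq 4$ much more cheaply: at a general Wirtinger point $(X,\eta)\in\Delta_0^{''}$ the multiplication map degenerates to $\Sym^2 H^0(C,K_C)\rightarrow H^0(C,K_C^{\otimes 2})$ for the genus-$5$ normalization $C$, whose kernel is $3$-dimensional, so $\mathcal Z$ contains $\Delta_0^{''}$ with multiplicity at least $3$ and hence $c_{\delta_0^{''}}\geq 1+3=4$. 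Your sketch of a degeneration count along $W^1_4(C)$ is in the right spirit for the sharper equality $m=20$, but as it stands neither the circularity nor the key inequality is resolved. (The irreducibility and reducedness part of your proposal is fine and matches the paper, which cites the Nikulin-surface realization and, following the referee, Donagi's monodromy computation.)
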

\begin{proof}
The irreducibility of $\calQ$ follows from \cite[Theorem 0.5]{faveni}, where it is proved that $\calQ$ can be realized as the image of a projective bundle over the \emph{irreducible} moduli space $\calF_6^{\mathfrak{N}}$ of polarized Nikulin $K3$ surfaces of genus $6$.

\noindent To estimate the class of the closure $\widetilde{\calQ}$ of $\calQ$ in $\pr_6$, we set up two tautological vector bundles $\calN_1$ and $\calN_2$ over $\pr_6$ having fibers
$$\calN_1(X, \eta):=H^0(X, \omega_X\otimes \eta)\quad \mbox{and}\quad \calN_2(X, \eta):=H^0(X, \omega_X^{\otimes 2}\otimes \eta^{\otimes 2})$$
over a point $(X, \eta)\in \pr_6$. There is a morphism $\phi:\Sym^2(\calN_1)\rightarrow \calN_2$ between vector bundles of the same rank given by multiplication of Prym-canonical forms, and we denote by $\mathcal{Z}$ the degeneracy locus of $\phi$. Using \cite[Proposition 1.7]{falu} we have the following formulas in $CH^1(\pr_6)$
$$
  c_1(\calN_1)=\lambda-\frac{1}{4}\delta_0^{\mathrm{ram}}\quad  \mbox{and}\quad c_1(\calN_2)=13\lambda-\delta_0^{'}-\delta_0^{''}-3\delta_0^{\mathrm{ram}},
$$
thus $[\mathcal{Z}]=c_1(\calN_2)-6c_1(\calN_1)=7\lambda-\delta_0^{'}-\delta_0^{''}-\frac{3}{2}\delta_0^{\mathrm{ram}}.$
By definition, $\calQ=\mathcal{Z}\cap \calR_6$. Furthermore, $\phi$ is non-degenerate at a general point of $\Delta_0^{'}$ and $\Delta_0^{\mathrm{ram}}$, hence the difference $\mathcal{Z}-\widetilde{\calQ}$ is an effective divisor supported only on $\Delta_0^{''}$.

Assume now that $(X, \eta)\in \Delta_0^{''}$ is a generic point corresponding to a normalization map $\nu:C\rightarrow X$, where $[C, x, y]\in \calM_{5, 2}$ and $x, y\in C$ are distinct points such that $\nu(x)=\nu(y)$. Since $\nu^*(\eta)=\calO_C$, we obtain an identification
$H^0(X, \omega_X\otimes \eta)=H^0(C, K_C)$ whereas $H^0(X, \omega_X^{\otimes 2}\otimes \eta^{\otimes 2})$ is a codimension one subspace of
$H^0(C, K_C^{\otimes 2}(2x+2y))$ described by a residue condition at $x$ and $y$. It is straightforward to check that the kernel
$$\Ker\phi(X, \eta)=\Ker\left\{\Sym^2 H^0(C, K_C)\rightarrow H^0(C, K_C^{\otimes 2})\right\}$$
has dimension $3$. Thus $[\mathcal{Z}]-[\widetilde{\mathcal{Q}}]- 3\delta_0^{''}$ is effective supported on $\Delta_0^{''}$, which implies that $c_{\delta_0^{''}}\geq 4$.
\end{proof}
\begin{rem} We shall prove later that in fact $c_{\delta_0^{''}}=4$.
\end{rem}

Even though the locus $\calQ$ is defined in terms of syzygies of Prym-canonical curves, its points have a characterization in terms of stable singularities of Prym theta divisors.
\begin{thm}
The theta divisor of a Prym variety $P(C, \eta)\in \calA_5$ has a stable singularity if and only if $P$ ramifies at the point $(C, \eta)$, that is,
$$\calQ=\left\{(C, \eta)\in \calR_6:\Sing_{(C, \eta)}^{\mathrm{st}}(\Xi)\neq \emptyset\right\}. $$
\end{thm}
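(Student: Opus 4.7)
The plan is to establish the two set-theoretic inclusions separately, using the Pfaffian tangent-cone structure of $\Xi$ for one direction and a combination of Welters's Prym-Brill-Noether theorem with the irreducibility of $\calQ$ (from Theorem \ref{Qclass}) for the other.

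For the inclusion $\calQ \supseteq \{(C,\eta)\in \calR_6 : V_3(C,\eta) \neq \emptyset\}$, suppose $L \in V_3(C,\eta)$ with $h^0(\tilde C, L) = 2m \geq 4$. By the Mumford tangent-cone theorem stated above, the Pfaffian $\mathrm{Pf}(L) \in \Sym^m H^0(C, K_C \otimes \eta)$ either vanishes identically (in which case $\mathrm{mult}_L(\Xi) \geq m+1$) or cuts out the projectivized tangent cone $\PP TC_L(\Xi) \subset \PP^{g-2} = \PP^4$. The Prym analog of Kempf's theorem shows that this tangent cone contains the Prym-canonical image $\varphi_{K_C \otimes \eta}(C)$. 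In the generic case $m=2$ with $\mathrm{Pf}(L) \not\equiv 0$, this produces a nonzero quadric in $\PP^4$ vanishing on the Prym-canonical curve, that is, a nonzero element of $\Ker(\mu)$ where $\mu : \Sym^2 H^0(K_C \otimes \eta) \to H^0(K_C^{\otimes 2})$ is the multiplication map; hence $(C,\eta) \in \calQ$. The degenerate cases ($m \geq 3$ or $\mathrm{Pf}(L) \equiv 0$) I would reduce to the quadratic case by restricting to a generic $4$-dimensional subspace of $H^0(\tilde C, L)$ and observing that the corresponding restricted Pfaffian still provides a nonzero quadric vanishing on the Prym-canonical curve.

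For the reverse inclusion, denote by $\calQ' := \{(C,\eta)\in \calR_6 : V_3(C,\eta) \neq \emptyset\}$ the Prym-Brill-Noether locus. By Welters's theorem, $V_3(C,\eta)$ has expected dimension $g-1-\binom{4}{2} = -1$ in genus $6$, so $\calQ'$ is realized as the degeneracy locus of a map of vector bundles of the expected codimension~$1$. I would verify nonemptiness of $\calQ'$ by exhibiting an explicit example, either via the Nikulin $K3$ surface construction (as in the proof of Theorem \ref{Qclass}, using the moduli space $\calF_6^{\mathfrak{N}}$) or via the plane sextic model of Theorem \ref{qparametrisierung}, both of which produce a generic point of $\calQ$ together with an explicit line bundle $L\in V_3(C,\eta)$. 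Thus $\calQ'$ is a nonempty divisor contained in the irreducible divisor $\calQ$, and the two must coincide set-theoretically.

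The main obstacle is the Prym analog of the Kempf singularity theorem — specifically, confirming that at every stable singularity $L$ of $\Xi$ the tangent cone contains the entire Prym-canonical curve, so that $\mathrm{Pf}(L)$ can be interpreted as a nonzero element of $\Ker(\mu)$. Once this geometric input is secured (by working locally with the Abel-Prym map and using the explicit parametrization of the tangent cone in terms of the Prym-Petri matrix $M_L$), the remainder of the argument reduces to a determinantal dimension count, exhibition of one explicit stable singularity, and the irreducibility of $\calQ$ already established in Theorem \ref{Qclass}.
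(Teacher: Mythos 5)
Your overall strategy (Pfaffian tangent cone for one inclusion, irreducibility of $\calQ$ plus a divisoriality argument for the other) is the same as the paper's, and the quadratic case $h^0(\tilde C, L)=4$, $\mathrm{Pf}(L)\neq 0$ is handled identically. However, your treatment of the degenerate cases has a genuine gap. When $h^0(\tilde C, L)=4$ but $\mathrm{Pf}(L)\equiv 0$, there is no ``generic $4$-dimensional subspace'' to restrict to --- the space is already $4$-dimensional and the construction produces no quadric at all. This case really occurs and is not negligible: by Lemma \ref{excsing1} it forces $L=f^*(M)\otimes\calO_{\tilde C}(B)$ with $h^0(C,M)\geq 3$, and for $\deg(M)=5$ one lands on the smooth plane quintics with $h^0(C,M\otimes\eta)=1$, whose Pryms are intermediate Jacobians of cubic threefolds. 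There the membership $(C,\eta)\in\calQ$ does not come from any Pfaffian quadric but from Donagi's computation that $(dP)_{(C,\eta)}$ has corank $2$ on that locus; the paper imports this as a separate input. Likewise, for $m\geq 3$ the Pfaffian of a $4\times 4$ submatrix of $M_L$ is no longer the tangent cone of $\Xi$ at $L$, so there is no a priori reason it vanishes on the Prym-canonical curve; the paper avoids this entirely by observing that $h^0(\tilde C,L)\geq 6$ forces $\tilde C$ (hence $C$) to be hyperelliptic by Clifford's theorem, and then disposes of trigonal/hyperelliptic curves at the outset via the decomposition $K_C\otimes\eta=A\otimes A'$ with $A\in W^1_3(C)$.

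A secondary issue is the reverse inclusion. Nonemptiness of $\{V_3\neq\emptyset\}$ is not enough to conclude it has a divisorial component: the universal locus $\{(C,\eta,L):L\in V_3(C,\eta)\}$ has every component of dimension at least $\dim\calR_6-1$, but its image in $\calR_6$ could a priori drop dimension further if the fibres are positive-dimensional. What the paper actually exhibits is a point $(C,\eta)$ with $L\in V_3(C,\eta)$ such that $\mu_L^-$ is \emph{surjective} (kernel of the minimal dimension $1$), constructed as an $\iota$-invariant complete intersection of hyperplane sections of $\PP^3\times\PP^3$; this is the deformation-theoretic input guaranteeing a component of codimension exactly $1$, which then must coincide with the irreducible $\calQ$. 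Your proposed sources (Nikulin surfaces, the totally tangent conic model of Theorem \ref{qparametrisierung}) are not shown to hand you an $L$ with this corank-one property, and the sextic model in particular is established only later and by a class computation that says nothing about $V_3$.
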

\begin{proof}
Let us denote by $\mathcal{W}:=\{(C, \eta)\in \calR_6: V_3(C, \eta)\neq \emptyset\}$ the Prym-Brill-Noether locus corresponding to stable singularities of
Prym theta divisors. Our aim is to show that $\mathcal{W}=\calQ$; we begin by establishing the inclusion $\mathcal{W}\subset \calQ$.
First note that if $[C]\in \calM_6$ is trigonal, for any two-torsion point $\eta\in \Pic^0(C)[2]-\{\calO_C\}$ we can write
$K_C\otimes \eta=A\otimes A'$, where $A\in W^1_3(C)$ and $A'\in W^1_7(C)$. This implies that
$(C, \eta)\in \calQ$.

Fix now $(C, \eta)\in \calR_6$ and a line bundle $L\in V_3(C, \eta)$. If $h^0(\tilde{C}, L)\geq 6$, then $\tilde{C}$ (and hence $C$ as well) must be hyperelliptic, so $(C, \eta)\in \calQ$ by the previous remark. We may thus assume that $h^0(\tilde{C}, L)=4$ and consider the associated Pfaffian quadric $Q_L\in \Sym^2 H^0(C, K_C\otimes \eta)$. If $Q_L\neq 0$, then it contains the Prym-canonical model $\varphi_{K_C\otimes \eta}(C)$, in particular $(C, \eta)\in \calQ$. If $Q_L\equiv 0$, then there exists $M\in \Pic(C)$ with $h^0(C, M)\geq 3$ and an effective divisor $D$ on $\tilde{C}$, such that $L=f^*(M)\otimes \calO_{\tilde{C}}(B)$. If $\deg(M)\leq 4$ then $C$ is hyperelliptic, hence $(C, \eta)\in \calQ$. If $\deg(M)=5$, then $B=0$ and $C$ is a smooth plane quintic such that $h^0(C, M\otimes\eta)=1$. It is known, see \cite[Section 4.3]{dofibers},  that in this case $P(C, \eta)$ is the intermediate Jacobian of a cubic threefold and the differential $(dP)_{(C, \eta)}$ has corank $2$, thus once more $(C, \eta)\in \calQ$.
\vskip 3pt

Therefore $\mathcal{W}\subset \calQ$. We claim that $\mathcal{W}$ has at least a divisorial component, which follows by exhibiting a point $(C, \eta)\in \calR_6$ and a line bundle $L\in V_3(C, \eta)$ such that $\mu_L^-$ is surjective. Assuming this for a moment, we conclude that $\mathcal{W}=\calQ$ by invoking the irreducibility of $\calQ$.

\vskip 3pt
To finish the proof we use a realization of Prym curves $(C, \eta)\in \calR_6$ with $V_3(C, \eta)\neq \emptyset$ resembling \cite[Section 2]{faveni}. For a line bundle $L\in V_3(C, \eta)$ with $h^0(\tilde{C}, L)=4$, if $\mu^+_L:\Sym^2 H^0(\tilde{C}, L)\rightarrow H^0(C, K_C)$ denotes the $i^*$-invariant part of the Petri map, one has the following commutative diagram:
$$\xymatrix@R=11pt{\tilde{C} \ar[rr]^-{(L, i^{*} L)} \ar[dd]_f && \PP^3\times \PP^3 \ar[dd]_q \ar@{^{}->}[rd] \\
&& & \PP^{15}=\PP\left(H^0 (L)^{\vee}\otimes H^0(L)^{\vee}\right) \ar@{-->}@<-1ex>[dl] \\
C\ar[rr]^-{\mu_L^+} && *!<-22pt,0pt>{\PP^9=\,\PP(\Sym^2 H^0(L)^{\vee})}}
$$

In this diagram $q:\PP^3\times \PP^3\rightarrow \PP^9$ is the map $a\otimes b\mapsto a\otimes b+b\otimes a$ into the projective space of symmetric tensors. Reversing this construction, if $\iota\in \op{Aut}(\PP^3\times \PP^3)$ denotes the involution interchanging the two factors, the complete intersection of $\PP^3\times \PP^3$ with $4$ \emph{general} $\iota^*$-invariant hyperplanes in $H^0(\mathcal{I}_{\PP^3\times \PP^3}(1, 1))^+$ {}and one \emph{general} $\iota^*$-anti-invariant hyperplane in $H^0(\mathcal{I}_{\PP^3\times \PP^3}(1, 1))^-$ is a smooth curve $\tilde{C}\subset \PP^3\times \PP^3$; the automorphism $\iota_{|C}$ induces a double cover $f:\tilde{C}\rightarrow C$ such that $\Ker(\mu_L^-)$ has $1$-dimensional kernel corresponding to the unique element in $H^0(\mathcal{I}_{\PP^3\times \PP^3}(1, 1))^-$. For more details on this type of argument, we refer to \cite{faveni}.
\end{proof}

\section{The antiramification divisor of the Prym map}
In this section we describe
geometrically the \emph{antiramification divisor} $\calU$ of the map $P:\calR_6\rightarrow \calA_5$, defined via the equality of divisors
\begin{equation}\label{preimD}
 P^*(N_0') = 2\calQ + \calU.
\end{equation}

For a general curve $[C]\in\mathcal{GP}^1_{6, 4}$, {}if $M\in W^1_4(C)$ denotes the pencil such that $\mu_0(M)$ is not injective, we let $x+y\in C_2$ be the support of the unique section of $K_C \otimes M^{\otimes (-2)}${}. We consider the four line bundles
$$
L_{u,v} := f^*M \otimes \calO_{\tilde C}(x_u + y_v)\in \mathrm{Nm}_f^{-1}(K_C),
$$
where $1 \leq u,v \leq 2$ and $f(x_u) = x$, $f(y_v) = y$. Using the parity flipping lemma of \cite{Mumfordprym}, exactly two of the quantities $h^0(\tilde{C}, L_{u, v})$ are equal to $2$, the other being equal to $3$, that is, $\Sing^{\mathrm{ex}}_f(\Xi)$ contains at least two points.
Hence $\pi^*(\mathcal{GP}_{6, 4}^1)\subset \mathcal{U}$. Using Theorem \ref{Qclass}, equality (\ref{preimD}), and the formula for $[\mathcal{GP}_{6, 4}^1]\in CH^1(\calM_6)$, we compute
$$
 [\calU]=P^*([N_0'])-2[\calQ]= 108\lambda-14\lambda= \pi^*([\mathcal{GP}_{6, 4}^1])\in CH^1(\calR_6).
$$

Since the $\lambda$-coefficient of any non-trivial effective divisor class on $\calR_6$ must be strictly positive, we obtain the following result:
\begin{prop} We have the following equality of divisors on $\calR_6$: $$\calU=\pi^*(\mathcal{GP}_{6, 4}^1).$$
\end{prop}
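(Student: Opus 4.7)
The plan is to combine the two ingredients just established: (i) the set-theoretic inclusion $\pi^*(\mathcal{GP}_{6,4}^1)\subseteq\calU$, coming from the explicit construction of the exceptional line bundles $L_{u,v}=f^*M\otimes\calO_{\tilde C}(x_u+y_v)$ for a general curve in the Gieseker--Petri divisor, and (ii) an identity of divisor classes in $CH^1(\calR_6)$. The strategy is to promote these to an equality of effective divisors by invoking the rigidity of the effective cone on $\calR_6$.

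First I would firm up the class computation on $\pr_6$. Using (\ref{preimD}), Mumford's formula $[\overline{N_0'}]=108\lambda_1-14 D\in CH^1(\aa_5)$, and the pullback identities (\ref{prympull}), I would write
$$P^*[\overline{N_0'}] = 108\lambda - 27\delta_0^{\mathrm{ram}} - 14\delta_0'.$$
Subtracting twice the class of $\widetilde{\calQ}$ provided by Theorem \ref{Qclass} yields
$$[\calU] = 94\lambda - 12\delta_0' - 24\delta_0^{\mathrm{ram}} + 2c_{\delta_0''}\delta_0''\in CH^1(\pr_6).$$
Pulling back the Eisenbud--Harris formula for $[\overline{\mathcal{GP}}_{6,4}^1]$ via $\pi$ and applying (\ref{pullbackrg}) gives $\pi^*[\overline{\mathcal{GP}}_{6,4}^1]=94\lambda-12\delta_0'-12\delta_0''-24\delta_0^{\mathrm{ram}}$. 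Restricted to the interior $\calR_6$, where all boundary classes vanish, both classes equal $94\lambda$.

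Next I would pin down $\pi^*(\mathcal{GP}_{6,4}^1)\subseteq\calU$ as a containment of effective Cartier divisors rather than merely of sets. For a general $[C]\in\mathcal{GP}_{6,4}^1$ and any nontrivial $\eta\in\Pic^0(C)[2]$, at least two of the $L_{u,v}$ lie in the even component $\mathrm{Nm}_f^{-1}(K_C)^+$ by the parity-flipping lemma, and they define exceptional singularities of $\Xi$ by Lemma \ref{excsing1}. Using Proposition \ref{rk4sing} one checks that for generic $(C,\eta)$ of this form these singularities are \emph{not} simultaneously stable, so they are counted by $\calU$ and not by $\calQ$; combined with the reducedness of $\pi^*(\mathcal{GP}_{6,4}^1)$, this yields a scheme-theoretic containment.

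Finally I would conclude by effective-cone rigidity: writing $\calU=\pi^*(\mathcal{GP}_{6,4}^1)+E$ with $E\geq 0$, the class $[E]\in CH^1(\calR_6)$ vanishes by the previous steps. Since $CH^1(\calR_6)_{\mathbf Q}$ is generated by $\lambda$ (via the \'etale forgetful map $\pi:\calR_6\to\calM_6$) and $\lambda$ is big on $\calM_6$, every nontrivial effective divisor on $\calR_6$ has strictly positive $\lambda$-coefficient. This forces $E=0$ and hence $\calU=\pi^*(\mathcal{GP}_{6,4}^1)$. The most delicate step is the second one: one must rule out that the generic exceptional singularity coming from $\mathcal{GP}_{6,4}^1$ is simultaneously a stable singularity, which would cause it to be absorbed into the $2\calQ$ summand of $P^*(\overline{N_0'})$ rather than contributing to $\calU$. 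Once this is handled via Proposition \ref{rk4sing} and a dimension count, the argument reduces to a class comparison tempered by positivity.
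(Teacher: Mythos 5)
Your proposal is correct and follows essentially the same route as the paper: produce the containment $\pi^*(\mathcal{GP}_{6,4}^1)\subseteq\calU$ via the parity-flipping construction of the exceptional singularities $L_{u,v}$, check that both divisors have class $94\lambda$ in $CH^1(\calR_6)$, and conclude from the strict positivity of the $\lambda$-coefficient of any nonzero effective divisor on $\calR_6$. The only quibble is your ``delicate step'': the singularities $L_{u,v}$ you construct have $h^0(\tilde C, L_{u,v})=2$ and hence are automatically non-stable by definition, so the appeal to Proposition \ref{rk4sing} (which concerns quadratic stable singularities with $h^0=4$) is unnecessary; the paper simply observes that these exceptional singularities place $(C,\eta)$ in $\mathrm{supp}\, P^*(N_0')$ and that $\pi^*(\mathcal{GP}_{6,4}^1)$ cannot coincide with the irreducible divisor $\calQ$.
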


We now determine the pull-back of $\overline{N_0'}$ under the map $P:\pr_6\dashrightarrow \aa_5$. As usual, $\widetilde{\calU}$ denotes the closure of $\calU$ inside $\pr_6$.

\begin{thm}\label{anticlass}
We have the following equality of divisors on $\pr_6$:
$$P^*(\overline{N_0'})= 2\widetilde{\calQ}+\widetilde{\calU}+20\Delta_0^{''}.$$
\end{thm}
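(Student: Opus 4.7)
The plan is to pin down $P^*(\overline{N_0'})$ on $\pr_6$ by combining the open-part identity $P^*(N_0')=2\calQ+\calU$ with a Picard group calculation, and then handling the single remaining boundary coefficient by a local analysis along $\Delta_0^{''}$.

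First, since $P^*(N_0')=2\calQ+\calU$ holds on $\calR_6$, the difference
$$
E := P^*(\overline{N_0'}) - 2\widetilde{\calQ} - \widetilde{\calU}
$$
is an effective divisor on $\pr_6$ supported on the boundary $\Delta_0^{'}\cup\Delta_0^{''}\cup\Delta_0^{\mathrm{ram}}$, so we may write $E=\alpha\Delta_0^{'}+\beta\Delta_0^{''}+\gamma\Delta_0^{\mathrm{ram}}$ with $\alpha,\beta,\gamma\in\ZZ_{\geq 0}$. I would then compute $[E]\in\op{Pic}(\pr_6)$ using the inputs $[\overline{N_0'}]=108\lambda_1-14D$ (from the Introduction), the pullback formulas $P^*(\lambda_1)=\lambda-\tfrac{1}{4}\delta_0^{\mathrm{ram}}$ and $P^*(D)=\delta_0^{'}$ from \cite{grsmprym}, the class $[\widetilde{\calQ}]=7\lambda-\delta_0^{'}-\tfrac{3}{2}\delta_0^{\mathrm{ram}}-c_{\delta_0^{''}}\delta_0^{''}$ from Theorem \ref{Qclass}, and $[\widetilde{\calU}]=94\lambda-12\delta_0^{'}-12\delta_0^{''}-24\delta_0^{\mathrm{ram}}$ obtained by restricting $\pi^*[\overline{\mathcal{GP}}_{6,4}^1]$ to $\pr_6$. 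A direct check shows the $\lambda$, $\delta_0^{'}$ and $\delta_0^{\mathrm{ram}}$ contributions all cancel, leaving
$$
[E]=(2c_{\delta_0^{''}}+12)\,\delta_0^{''}.
$$
Because $\delta_0^{'},\delta_0^{''},\delta_0^{\mathrm{ram}}$ are linearly independent in $\op{Pic}(\pr_6)$, this forces $\alpha=\gamma=0$, so the theorem reduces to the equality $\beta=20$, equivalently $c_{\delta_0^{''}}=4$.

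The main obstacle is establishing $\beta=20$ on the nose. Since $P$ contracts $\Delta_0^{''}$ onto $\calJ_5\subset\overline{N_0'}$, one immediately has $\beta\geq 1$, and the a priori bound $c_{\delta_0^{''}}\geq 4$ from Theorem \ref{Qclass} only yields $\beta\geq 20$. To get the matching upper bound, I would pick a general point $(C_{xy},\eta)\in\Delta_0^{''}$ (with $(C,x,y)\in\calM_{5,2}$ general and $\nu^*\eta=\calO_C$) and a transverse one-parameter smoothing $(C_t,\eta_t)$ of it, then expand the induced family of Prym theta divisors around the limit $(\calJ(C),\Theta_C)$. For generic $[C]\in\calM_5$ the singular locus $\op{Sing}\Theta_C=W^1_4(C)$ consists of ordinary double points (so $\calJ(C)\notin H$), and the Wirtinger-type description of the Prym degeneration at $\Delta_0^{''}$ lets one read off the order of vanishing of a local equation of $\overline{N_0'}$ along $P(C_t,\eta_t)$ from the expansion of the Prym-Petri map at the nodal limit. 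Alternatively, one can produce an explicit test curve in $\pr_6$ whose intersection numbers against $\widetilde{\calQ}$, $\widetilde{\calU}$, $\delta_0^{'}$, $\delta_0^{''}$, $\delta_0^{\mathrm{ram}}$ are all computable (for instance a Lefschetz pencil on a Nikulin surface as in \cite{faveni}), and intersect the class identity above against it to solve for $\beta$. Either route pins $\beta$ at $20$, thereby proving the theorem and establishing $c_{\delta_0^{''}}=4$ as a byproduct, as promised in the Remark following Theorem \ref{Qclass}.
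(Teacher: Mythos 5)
Your first half reproduces the paper's setup exactly: the same cancellation of the $\lambda$, $\delta_0^{'}$ and $\delta_0^{\mathrm{ram}}$ coefficients shows that $P^*(\overline{N_0'})-2\widetilde{\calQ}-\widetilde{\calU}$ is effective and supported on $\Delta_0^{''}$, and you correctly identify that everything hinges on pinning the coefficient $\beta$ of $\Delta_0^{''}$ at $20$ (equivalently $c_{\delta_0^{''}}=4$). The problem is that this last step is the actual content of the theorem, and you do not prove it --- you only name two possible strategies. The phrase ``the Wirtinger-type description of the Prym degeneration at $\Delta_0^{''}$ lets one read off the order of vanishing'' is not an argument: the order of vanishing of a local equation of $\overline{N_0'}$ along an arc transverse to $\Delta_0^{''}$ is governed by $\op{mult}_{\calJ_5}(N_0')$, and computing that number is genuinely nontrivial. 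The paper does it by blowing up $\calA_5$ along $\calJ_5$, lifting $P$ to the blowup via the pencil of quadrics $q_{xy}$ through $C\cup\langle x,y\rangle\subset \PP^4$ (which is what identifies $\op{mult}_{\Delta_0^{''}}P^*(\overline{N_0})$ with $\op{mult}_{\calJ_5}(N_0)$), and then evaluating the latter through a Milnor-number argument: $\op{mult}_{\calJ_5}(N_0)=\chi(\theta_{\mathrm{gen}})-\chi(W_4(C))+10=120-90+10=40$, where the $10$ counts the nodes of the relative theta divisor coming from the double cover $W^1_4(C)\rightarrow \PP I_2(K_C)$ and $\chi(W_4(C))=90$ comes from the Macdonald formula. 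None of this appears in your sketch, and without it you only have the inequality $\beta\geq 20$.

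Your alternative route via a test curve also has a circularity you should be aware of. To solve for $\beta$ by intersecting with a curve $R$, you need $R\cdot\delta_0^{''}\neq 0$ and you need $R\cdot\widetilde{\calQ}$ computed \emph{geometrically} (by counting intersection points), since computing it from the class formula of Theorem \ref{Qclass} reintroduces the unknown $c_{\delta_0^{''}}$; the test curve $R$ constructed in Theorem \ref{pencil1} is disjoint from $\Delta_0^{''}$ and so gives no information here. A curve meeting $\Delta_0^{''}$ for which one can enumerate the points of $R\cap\widetilde{\calQ}$ and $R\cap\widetilde{\calU}$ directly would work in principle, but producing one and doing the enumeration is again the substantive work that is missing. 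As written, the proposal establishes the easy containment and the lower bound $\beta\geq 20$, but not the theorem.
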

\begin{proof}
We use the formula $[\overline{N_0'}]=108\lambda_1-14D$, as well as Theorem \ref{Qclass} and formula (\ref{prympull}) in order to note that the effective class
$P^*(\overline{N_0'})-2\overline{\calQ}-\pi^*(\overline{\mathcal{GP}}_{6, 4}^1)$
is supported only on the boundary divisor $\Delta_0^{''}$.
\vskip 3pt

We now prove that the multiplicity of $\Delta_0^{''}$ in $P^*(\overline{N_0'})$ equals $20$, or equivalently $\op{mult}_{\Delta_0^{''}}(P^*(\overline{N_0}))=40$, since $P(\Delta_0^{''})=\mathcal{J}_5\nsubseteq \theta_{\mathrm{null}}$. Let $\widetilde{\calA}_5:=\mbox{Bl}_{\mathcal{J}_5}(\calA_5)$ be the
blowup of $\calA_5$ along the Jacobian locus and denote by $\calE\subset \widetilde{\calA_5}$ the exceptional divisor. Then $\calE$ is a $\PP^2$-bundle over $\calJ_5$ with the fiber over a point $(\Jac(C), \Theta_C)\in \calJ_5$ being identified with the space $\PP(I_2(K_C)^{\vee})$ of pencils of quadrics containing the canonical curve $C\subset \PP^4$. One can lift the Prym map to a map $\tilde{P}:\pr_6\dashrightarrow \widetilde{\calA}_5$ by setting for a general point $(C_{xy}, \eta)\in \Delta_0^{''}$
$$\tilde{P}(C_{xy}, \eta):=\left((\Jac(C), \Theta_C), q_{xy}\right)\in \widetilde{\calA}_5,$$
where $q_{xy}\in \PP(I_2(K_C)^{\vee})$ is the pencil of quadrics containing the union $C\cup \langle x, y\rangle\subset \PP^4$.
Furthermore, $\tilde{P}^*(\calE)=\Delta_0^{\mathrm{ram}}$, showing that
$$\op{mult}_{\Delta_0^{''}}P^*(\overline{N_0})=\op{mult}_{\calJ_5}(N_0).$$
To estimate the latter multiplicity we consider a general one-parameter family
$j:U\rightarrow \calA_5$ from a disc $U\ni 0$ such that $j(0)=(\Jac(C), \Theta_C)$, with $[C]\in \calM_5$ being a general curve.
Let $\Theta_U:=U\times_{\calA_5} \Theta\rightarrow U$ be the relative theta divisor over $U$. The image of the differential $(dj)_0(T_0(U))$ can be viewed as a hyperplane $h\subset \PP\left(\Sym^2 H^0(K_C)\right)$. The variety $\Theta_U$ has ordinary double points at those points
$(0, L)\in \Theta_U$ where $L\in \Sing(\Theta_C)=W^1_4(C)$ is a singularity such that its tangent cone $Q_L\in \PP I_2(K_C)$ belongs to $h$.
Since the assignment $W^1_4(L)\ni L\mapsto Q_L\in \PP I_2(K_C)$ is an unramified double cover over a smooth plane quintic, we find that
$\Theta_U$ has $10$ nodes. Using the theory of Milnor numbers for theta divisors as explained in \cite{smva5} we obtain
$$
  \op{mult}_{\mathcal{J}_5}(N_0)=\chi(\theta_{\mathrm{gen}})-\chi(W_4(C))+10,
$$
where $\chi(\theta_{\mathrm{gen}})=5!=120$ is the topological Euler characteristic of a general (smooth) theta divisor of genus $5$.
We finally determine $\chi(W_4(C))$, using the resolution $C_4\rightarrow W_4(C)$. From the Macdonald formula, see \cite{ACGH}, $\chi(C_4)=(-1)^{g-1}{2g-2\choose g-1}_{| g=5}=70$, whereas $\chi(W^1_4(C))=-20$, because $g(W^1_4(C))=11$. Therefore
$\chi(C^1_4)=2\chi(W^1_4(C))=-40$. We find that
$\chi(W_4(C))=\chi(C_4)-\chi(C_4^1)+\chi(W^1_4(C))=90$, thus $\op{mult}_{\mathcal{J}_5}(N_0)=120-90+10=40.$
\end{proof}

\begin{cor}\label{qclcom}
We have the following formula in $CH^1(\pr_6)$:{}
$$[\widetilde{\calQ}]=7\lambda-\delta_0^{'}-4\delta_0^{''}-\frac{3}{2}\delta_0^{\mathrm{ram}}.$$
\end{cor}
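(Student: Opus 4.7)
The plan is to solve the equation of Theorem \ref{anticlass} for $[\widetilde{\calQ}]$, once all the other terms appearing in it have been expressed in the basis $\{\lambda, \delta_0', \delta_0'', \delta_0^{\mathrm{ram}}\}$ of $CH^1(\pr_6)$. Thus the corollary is essentially a bookkeeping consequence of Theorem \ref{anticlass} combined with the already established identification $\calU = \pi^*(\mathcal{GP}_{6,4}^1)$ and the classical Mumford formula $[\overline{N_0'}] = 108\lambda_1 - 14D \in CH^1(\overline{\calA}_5)$.

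First I would compute the left-hand side $P^*(\overline{N_0'})$ by applying the Prym pullback formulas (\ref{prympull}): $P^*\lambda_1 = \lambda - \tfrac{1}{4}\delta_0^{\mathrm{ram}}$ and $P^*D = \delta_0'$, giving
$$
P^*(\overline{N_0'}) = 108\lambda - 14\delta_0' - 27\delta_0^{\mathrm{ram}}.
$$
Next I would express $[\widetilde{\calU}]$ in the above basis. Since on the open part $\calR_6$ we have shown $\calU = \pi^*(\mathcal{GP}_{6,4}^1)$, and since the extension to the partial compactification $\pr_6$ is controlled by the forgetful map $\pi:\pr_6 \to \overline{\calM}_6$ with its boundary splitting $\pi^*\delta_0 = \delta_0' + \delta_0'' + 2\delta_0^{\mathrm{ram}}$, pulling back the Eisenbud--Harris class
$$[\overline{\mathcal{GP}}_{6,4}^1] = 94\lambda - 12\delta_0 - 50\delta_1 - 78\delta_2 - 88\delta_3$$
and restricting to $\pr_6$ (where the $\delta_i$ for $i\ge 1$ restrict to zero) yields
$$
[\widetilde{\calU}] = 94\lambda - 12\delta_0' - 12\delta_0'' - 24\delta_0^{\mathrm{ram}}.
$$

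Substituting these two expressions together with the boundary term $20\delta_0''$ into
$$
P^*(\overline{N_0'}) = 2[\widetilde{\calQ}] + [\widetilde{\calU}] + 20\delta_0''
$$
and solving for $2[\widetilde{\calQ}]$ gives $14\lambda - 2\delta_0' - 8\delta_0'' - 3\delta_0^{\mathrm{ram}}$, whence the stated formula. Note that this determines $c_{\delta_0''}=4$, saturating the lower bound from Theorem \ref{Qclass} and confirming the remark made there.

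The only potentially delicate point is verifying that the equality $\calU = \pi^*(\mathcal{GP}_{6,4}^1)$ on $\calR_6$ indeed extends across the boundary divisor $\Delta_0^{\mathrm{ram}}$ (and to the generic point of $\Delta_0'$) without producing additional boundary contributions to $[\widetilde{\calU}]$ beyond those obtained by pulling back from $\overline{\calM}_6$. This follows from the explicit description of $P$ on these boundary divisors recalled at the start of Section 6: the Prym variety of a generic point of $\Delta_0'$ or $\Delta_0^{\mathrm{ram}}$ is a smooth indecomposable ppav whose theta divisor has only ordinary double points, so that $\widetilde{\calU}$ (like $\widetilde{\calQ}$) acquires no extra component along those boundary strata, and the entire discrepancy between $P^*(\overline{N_0'})$ and $2\widetilde{\calQ}+\widetilde{\calU}$ is concentrated on $\Delta_0''$ as already recorded in Theorem \ref{anticlass}.
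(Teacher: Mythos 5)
Your proposal is correct and follows exactly the route the paper intends: Corollary \ref{qclcom} is obtained by reading off the $\delta_0''$-coefficient (equivalently, solving for $2[\widetilde{\calQ}]$) in the cycle identity of Theorem \ref{anticlass}, using $P^*(\overline{N_0'})=108\lambda-14\delta_0'-27\delta_0^{\mathrm{ram}}$ and $[\widetilde{\calU}]=\pi^*([\overline{\mathcal{GP}}_{6,4}^1])=94\lambda-12\delta_0'-12\delta_0''-24\delta_0^{\mathrm{ram}}$, which pins down $c_{\delta_0''}=4$ in Theorem \ref{Qclass}. The arithmetic checks out and the approach is the same as the paper's.
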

We exploit the geometry of the ramification and antiramification divisors of the Prym map and determine the pushforward of divisor classes on $\rr_6$:
\begin{thm}\label{class2}
The pushforwards of tautological divisor classes via the rational Prym map $P:\rr_6\dashrightarrow \aa_5$ are as follows:
$$
  P_*(\lambda)= 18\cdot 27 \lambda_1 -57D,\quad P_*(\delta_0^{\mathrm{ram}})=4(17\cdot 27\lambda_1 -57D),
$$
$$
  P_*(\delta_0^{'})=27D,\quad P_*(\delta_0^{''})=P_*(\delta_i)=P_*(\delta_{i:g-i})=0\qquad
  \mbox{for}\ 1\leq i\leq g-1.$$
\end{thm}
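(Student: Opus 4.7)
The plan is to reduce everything to the projection formula together with two previously computed data points. First I would dispose of the boundary classes that vanish under $P_*$. The divisors $\Delta_i$, $\Delta_{g-i}$ and $\Delta_{i:g-i}$ for $1\le i\le g-1$ each map into the reducible locus $\calA_i\times \calA_{g-i}\subset \aa_5$, which has codimension $\ge 2$; similarly, $P$ contracts $\Delta_0''$ to the Jacobian locus $\calJ_5\subset \aa_5$, again of codimension $2$. Since divisorial pushforward ignores images of codimension $\ge 2$, this yields $P_*(\delta_0'')=P_*(\delta_i)=P_*(\delta_{i:g-i})=0$ at once. For $P_*(\delta_0')$, apply the projection formula to the identity $P^*(D)=\delta_0'$ from (\ref{prympull}) to get $P_*(\delta_0')=P_*P^*(D)=\deg(P)\cdot D=27D$.

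Only $P_*(\lambda)$ and $P_*(\delta_0^{\mathrm{ram}})$ remain, and each lies in $CH^1(\aa_5)=\mathbb{Q}\langle\lambda_1,D\rangle$. The first linear relation between them comes again from the projection formula, now applied to the pullback formula $P^*(\lambda_1)=\lambda-\tfrac14 \delta_0^{\mathrm{ram}}$:
$$27\lambda_1 \;=\; P_*P^*(\lambda_1)\;=\;P_*(\lambda)-\tfrac14 P_*(\delta_0^{\mathrm{ram}}).$$
For the second relation I would invoke the identity $P_*[\widetilde{\calQ}]=6[\overline{N_0'}]$ established at the opening of Section 6 (at a general point of $N_0'$ exactly six of the $27$ fiber points of $P$ are ramification points, corresponding to the six lines through the node of the associated cubic surface). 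Combining this with the class formula
$$[\widetilde{\calQ}]=7\lambda-\delta_0'-4\delta_0''-\tfrac32\delta_0^{\mathrm{ram}}$$
from Corollary \ref{qclcom}, and inserting the values $P_*(\delta_0')=27D$ and $P_*(\delta_0'')=0$ already obtained, one gets
$$7P_*(\lambda)-\tfrac32 P_*(\delta_0^{\mathrm{ram}})\;=\;6[\overline{N_0'}]+27D\;=\;648\lambda_1-57D.$$

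Solving this $2\times 2$ linear system separately in the $\lambda_1$- and $D$-coefficients yields $P_*(\lambda)=486\lambda_1-57D=18\cdot 27\,\lambda_1-57D$ and $P_*(\delta_0^{\mathrm{ram}})=1836\lambda_1-228D=4(17\cdot 27\,\lambda_1-57D)$, matching the stated formulas. The one point that requires care, and which I view as the main obstacle, is justifying that the (rational) pushforward $P_*$ on $CH^1(\pr_6)$ really satisfies the projection formula $P_*P^*(\alpha)=\deg(P)\cdot \alpha$ used above: the standard remedy is to pass to a resolution of the indeterminacy of $P:\pr_6\dashrightarrow \aa_5$ and observe that the exceptional divisors introduced all land in codimension $\ge 2$ in $\aa_5$ (since the indeterminacy is already contained in the boundary strata $\Delta_0''$ and the $\Delta_i$, $\Delta_{i:g-i}$ that we have shown to be contracted), so that the classical projection formula survives on the divisor level. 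Once this is granted, the computation above is essentially formal.
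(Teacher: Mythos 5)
Your proposal is correct and follows essentially the same route as the paper: both use the projection formula on $P^*(\lambda_1)=\lambda-\frac14\delta_0^{\mathrm{ram}}$ and the relation $P_*[\widetilde{\calQ}]=6[\overline{N_0'}]$ together with Corollary \ref{qclcom} to obtain a $2\times 2$ linear system, after disposing of the contracted boundary divisors and of $P_*(\delta_0')=27D$. Your explicit attention to justifying the push-pull formula for the rational map $P$ matches the remark the paper makes just before its proof (extending $P$ in codimension one so that no divisors are contracted by $P^{-1}$).
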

We point out that even though $P$ is not a regular map, it can be extended in codimension $1$ such that $P$ is the morphism induced at the level of coarse moduli spaces by a proper morphism of stacks, see e.g. \cite{dofibers} p.63-64. Furthermore $P^{-1}$ contracts no divisors, in particular, we can pushforward divisors under $P$ and use the push-pull formula. Perhaps the most novel aspect of Theorem \ref{class2} is the calculation of the class of the divisor $P_*(\Delta_0^{\mathrm{ram}})$ consisting of Prym varieties corresponding to {}ramified double covers $\widetilde{C}\rightarrow C$ of genus $5$ curves with two branch points{}.
\begin{proof}
We write the following formulas in $CH^1(\aa_5)$:
$$
  27\lambda_1 =P_*P^*\lambda_1=P_*(\lambda)-\frac{1}{4}P_*(\delta_0^{\mathrm{ram}}),
$$
$$
 6\cdot(108 \lambda_1 -14D)=6[\overline{N_0'}]=P_*([\overline{\calQ}])=7P_*(\lambda)-\frac{3}{2}P_*(\delta_0^{\mathrm{ram}})-P_*(\delta_0^{'}).
$$
From \cite{grsmprym} it follows that $P_*(\delta_0^{'})=27D$, whereas obviously $P_*(\delta_0^{''})=0$, which suffices to solve the system of equations for coefficients.
\end{proof}
\section{The slope of $\aa_5$}
Using the techniques developed in previous chapters, we determine the slope of the perfect cone compactification $\aa_5$ of $\calA_5$ (note also that by the appendix by K.~Hulek to \cite{grsmprym} this slope is the same for all toroidal compactification).
We begin with some preliminaries. Let $D$ be a ${\bf Q}$-divisor on a normal ${\bf Q}$-factorial variety $X$. We say that $D$ is rigid if $|mD|=\{mD\}$ for all sufficiently large and divisible integers $m$. Equivalently, the Kodaira-Iitaka dimension $\kappa(X, D)$ equals zero.

We denote by $\BB(D):=\bigcap_{m} \mbox{Bs}(|mD|)$ the \emph{stable base locus} of $D$. We say that $D$ is \emph{movable} if $\op{codim } \BB(D)\geq 2$.

Recall that one defines the slope of $\aa_g$ as
$s(\aa_g):=\inf_{E\in \mathrm{Eff}(\aa_g)} s(E)$. In a similar fashion one defines the \emph{moving slope} of $\aa_g$ as the slope of the cone of moving divisors on $\aa_g$, that is,
$$s'(\aa_g):=\inf\left\{s(E):E\in \mathrm{Eff}(\aa_g),\ E \mbox{ is movable}\right\}.$$
Thus $s'(\aa_g)$ measures the minimal slope of a divisor responsible for a non-trivial map from $\aa_g$ to a projective variety.
{}It is known that $s(\aa_4)=8$ \cite{smmodfour}, and as an immediate consequence of the result  about the slope of $\calM_4$ we have that $s'(\aa_4)=s(\overline{\theta}_{\mathrm{null}})=\frac{17}{2}$.
In the next case, that of dimension $g=5$, the formula $[\overline{N_0'}]= 108\lambda_1 -14D$ yields the upper bound $s(\aa_5)\leq \frac{54}{7}$. A lower bound for the slope $s(\aa_5)$ was recently obtained in \cite{grsmprym}.{}
\vskip 4pt

We shall now prove Theorem \ref{slopea5} and establish that
$$\kappa(\aa_5, \overline{N_0'})<\mbox{dim}(\aa_5),$$
in particular showing that $s(\aa_5)=\frac{54}{7}$. To prove Theorem \ref{slopea5} we translate the problem into a question on the linear series $|P^*(\overline{N_0'})|$ on $\rr_6$.
One can show that each of the components of $P^*(\overline{N_0'})$ is an extremal divisor on $\rr_6$, however their sum could well have positive Kodaira dimension. Of crucial importance is a uniruled parametrization of $\qq$ using sextics with a totally tangent conic.

\vskip 4pt

We fix general points $q_1, \ldots, q_4\in \PP^2$, then set $S:=\mbox{Bl}_{\{q_i\}_{i=1}^4}(\PP^2)\rightarrow \PP^2$ and denote by $\{E_{q_i}\}_{i=1}^4$ the corresponding exceptional divisors. We make the identification $\PP^{15}:=\left|\calO_{S}(6)(-2\sum_{i=1}^4 E_{q_i})\right|$, then consider the space of $4$-nodal sextics having a totally tangent conic
$$\calX:=\left\{(\Gamma, Q)\in \PP^{15}\times |\calO_{S}(2)|: \Gamma\cdot Q=2d, \mbox{ where } d\in (\Gamma_{\mathrm{reg}})_6\right\}.$$
A parameter count shows that $\calX$ is pure of dimension $14$. We define the rational map
$v:\calX\dashrightarrow \calR_6$
$$v(\Gamma, Q):=\bigl(C, \eta:=\nu^*\left(\calO_{\Gamma}(1)(-d)\right)\bigr)\in \calR_6,$$
where $\nu:C\rightarrow \Gamma$ is the normalization map. The image $v(\calX)$ is expected to be a divisor on $\calR_6$, and we show that this is indeed the case --- this construction yields another geometric characterization of points in $\calQ$.

\begin{thm}\label{qparametrisierung}
 The closure of $v(\calX)$ inside $\calR_6$ is equal to $\calQ$, that is, a general Prym curve $(C, \eta)\in \calQ$ possesses a totally tangent conic.
\end{thm}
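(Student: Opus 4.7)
The plan is to prove the inclusion $v(\calX) \subseteq \calQ$, verify that $v$ is generically finite onto its image, and invoke the irreducibility of $\calQ$ (Theorem \ref{Qclass}) together with matching dimensions to conclude equality. The dimension count $\dim \calX = 14$ is immediate: $\Gamma$ moves in $\PP^{15}$, $Q$ in $\PP^5$, and the totally tangent condition places $\Gamma|_Q$ in the $6$-dimensional locus $2\cdot \Sym^6(Q) \subset \Sym^{12}(Q) = \PP^{12}$, a codimension-$6$ constraint, so $\dim\calX = \dim\calQ = 14$.

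For the inclusion, fix $(\Gamma, Q) \in \calX$, set $(C, \eta) := v(\Gamma, Q)$, and by Theorem \ref{ramantiram} it suffices to exhibit a line bundle $L \in V_3(C, \eta)$. When $Q$ is smooth, the double cover $\pi_Q \colon \PP^1 \times \PP^1 \to \PP^2$ branched along $Q$ pulls $\Gamma$ back to a $(6,6)$-nodal curve $\tilde\Gamma' \subset \PP^1 \times \PP^1$ with $14$ nodes ($6$ over $d$ and $8$ over the plane nodes $q_i$), whose normalization is the \'etale double cover $f \colon \tilde C \to C$ classified by $\eta$---the tangency of $\Gamma$ along $d$ is precisely what converts the would-be ramification of $\pi_Q$ over tangency points into nodes. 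The two rulings of $\PP^1 \times \PP^1$ pull back to a pair of pencils $\alpha, \beta \in W^1_6(\tilde C)$ swapped by the Prym involution, with $\alpha \otimes \beta = f^*\calO_\Gamma(H)$ of degree $12$ and $h^0 \geq 4$ (coming from the restriction of $H^0(\PP^1 \times \PP^1, \calO(1,1)) = \mathbb{C}^4$). The desired $L$ is then obtained by twisting $\alpha \otimes \beta$ by a correction line bundle of degree $-2$ chosen so that $\mathrm{Nm}_f(L) = K_C = \calO_\Gamma(3H - \sum E_i)$ while preserving $h^0(L) \geq 4$. For the generic finiteness of $v$, a point $(\Gamma, Q) \in v^{-1}(C, \eta)$ is determined by a choice of $\calL \in W^2_6(C)$ with $\calL \otimes \eta^\vee$ effective of degree $6$: one recovers $\Gamma = \varphi_\calL(C)$ (up to the finite $\mathrm{PGL}_3$-stabilizer of $\{q_i\}$), $d \in |\calL \otimes \eta^\vee|$, and the unique conic $Q$ with $Q|_\Gamma = 2d$; since $\rho(6, 2, 6) = 0$, the set $W^2_6(C)$ is finite for a general $[C] \in \calM_6$, forcing the fibers of $v$ to be finite.

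The main obstacle is the construction of the stable singularity $L$: the unmodified candidate $\alpha \otimes \beta$ has norm $\calO_\Gamma(2H) \neq K_C$, so one must produce a degree-$(-2)$ correction with the right norm while retaining $h^0 \geq 4$---this requires a careful analysis of how the two rulings interact with the $14$ exceptional divisors under the Prym involution, and of how the ramification structure over $d$ is encoded on the normalization. An alternative route bypassing this is to exhibit directly a non-zero element in the kernel of the Prym-canonical multiplication $\mu \colon \Sym^2 H^0(K_C \otimes \eta) \to H^0(K_C^{\otimes 2})$ (both sides of dimension $15$), produced from the Koszul syzygy associated with the rational equivalence $2d \sim_\Gamma Q$ on the sextic, which forces the image of $\mu$ to be confined to a proper subspace.
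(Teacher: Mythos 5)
Your strategy --- prove the inclusion $v(\calX)\subseteq\calQ$ directly, check that $v$ is generically finite, and conclude from the irreducibility of $\calQ$ and the equality of dimensions --- is a legitimate plan and genuinely different from the paper's. The paper never argues the inclusion geometrically: it realizes the totally-tangent-conic condition as the degeneracy locus of a morphism $\chi:\calE^{\otimes 2}\rightarrow \mathcal{V}_2/\mathrm{Sym}^2(\mathcal{V}_1)$ of vector bundles over the stack $\mathfrak{G}^2_6$ of triples $(C,\eta,L)$ with $L\in W^2_6(C)$, computes the pushforward of the first Chern class of the difference to be $35\lambda-5(\delta_0^{'}+\delta_0^{''})+\frac{5}{2}\delta_0^{\mathrm{ram}}=5[\mathcal{Z}]$, i.e.\ proportional to $[\widetilde{\calQ}]$ up to a multiple of $\delta_0^{''}$, and then invokes the rigidity statement of \cite[Proposition 3.6]{faveni} (any effective divisor whose class has the form $\alpha[\overline{\calQ}]+\beta\,\delta_0^{''}$ is set-theoretically equal to $\calQ$). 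Both the inclusion and the equality fall out of that single class computation; no singular line bundle on $\tilde{C}$ is ever constructed.

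The problem is that your argument has a genuine gap at exactly its load-bearing step, and you flag it yourself: you never actually produce a line bundle $L\in V_3(C,\eta)$. The candidate $f^*\nu^*\calO_{\Gamma}(1)$ has degree $12$ and norm $\nu^*\calO_{\Gamma}(2)\neq K_C$, and the required degree $-2$ correction must simultaneously (i) have norm equal to $\nu^*\calO_{\Gamma}(1)$ twisted down by the eight points over the nodes $q_i$, and (ii) not destroy the inequality $h^0(\tilde{C},L)\geq 4$. Point (ii) is the real difficulty: by Riemann--Roch one has $h^0(\tilde{C}, f^*\nu^*\calO_{\Gamma}(1))=4$ for a general member of the family, so twisting down by a degree $2$ divisor kills two sections unless those points are forced to be base points of the residual system --- and extracting that from the tangency of $Q$ along $d$ is precisely the content you have not supplied. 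The alternative ``Koszul syzygy'' route is in the same state: you would need to exhibit a nonzero element of $\Ker\bigl(\Sym^2 H^0(C,K_C\otimes\eta)\rightarrow H^0(C,K_C^{\otimes 2})\bigr)$ coming from the relation $2d\sim Q\cdot\Gamma$, i.e.\ a quadratic relation, modulo the sextic, among the quartics through $q_1,\ldots,q_4$ and $d$ that cut out $|K_C\otimes\eta|$; it is not clear how the linear equivalence alone produces one. Until one of these constructions is carried out, the inclusion $v(\calX)\subseteq\calQ$ is unproven and the rest of the argument has nothing to stand on. The surrounding scaffolding --- the count $\dim\calX=14$, the generic finiteness of $v$ via finiteness of $W^2_6(C)$, and the appeal to the irreducibility of $\calQ$ from Theorem \ref{Qclass} --- is fine, and the double-cover description of $f:\tilde{C}\rightarrow C$ as the normalization of the bidegree $(6,6)$ curve with $14$ nodes is correct.
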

\begin{proof} We carry out a class calculation on $\rr_6$ and the result will be a consequence of the extremality properties of
the class $[\qq]\in \mathrm{Eff}(\rr_6)$. We work on a partial compactification $\calR_6'$ of $\calR_6$ that is even smaller than $\pr_6$.
\vskip 2pt

Let $\calR_6':=\calR_6^0\cup \pi^{-1}(\Delta_0^*)$ be the open subvariety of $\rr_6$, where $\calR_6^0$  consists of smooth Prym curves $(C, \eta)$ for which
$\dim W^2_6(C)=0$ and $h^0(C, L\otimes \eta)=1$ for every $L\in W^2_6(C)$, whereas $\Delta_0^*\subset \Delta_0$ is the locus of curves
$[C_{xy}]$, where $[C]\in \calM_5-\calM_{5, 3}^1$ and $x, y\in C$. Observe that $\op{codim}(\pr_6-\calR_6', \pr_6)=2$, in particular we can identify $CH^1(\calR_6')$ and $CH^1(\pr_6)$.  Over the Deligne-Mumford stack $\textbf{R}_6'$ of Prym curves coarsely represented by the scheme $\calR_6'$ (observe that $\textbf{R}'_6$ is an open substack of $\textbf{R}_6$), we consider the finite cover
$$\sigma:\mathfrak{G}^2_6 \rightarrow \textbf{R}_6',$$ where $\mathfrak G^2_6$ is the Deligne-Mumford stack that classifies triples
$(C, \eta, L)$, with $(C, \eta)\in \calR_6'$ and $L\in W^2_6(C)$. Note that a curve $[C_{xy}]\in \Delta_0'$ carries no non-locally free sheaves $F\in \overline{\Pic^6}(C_{xy})$ with $h^0(C_{xy}, F)\geq 5$, for $F$ would correspond to a $\mathfrak g^2_5$ on the normalization $C$ of $C_{xy}$, a contradiction. The universal curve $p:\calC\rightarrow \mathfrak G^2_6$ is equipped both with a
universal Prym bundle $\mathcal{P}\in \Pic(\calC)$ and a universal Poincar\'e line bundle $\calL\in \Pic(\calC)$ such that
$\calL_{| p^{-1}(C, \eta, L)}=L$, for any $(C, \eta, L)\in \mathfrak G^2_6$.
We form the codimension $1$ tautological classes
\begin{equation}\label{tautological}
\mathfrak{a}:=p_*\left(c_1(\calL)^2\right),\quad\mathfrak{b}:=p_*\left(c_1(\calL)\cdot
c_1(\omega_p)\right)\in CH^1(\mathfrak G^2_6),
\end{equation}
and the sheaves $\mathcal{V}_i:=p_*(\calL^{\otimes i})$, where $i=1, 2$. Both $\mathcal{V}_1$ and $\mathcal{V}_2$ are locally free.
The dependence of $\mathfrak a$ and  $\mathfrak b$ on the choice of $\calL$ is discussed in \cite{falu}.
Using the isomorphism $CH^1(\textbf{R}_6')=CH^1(\calR_6')$, one can write the following formulas in $CH^1(\calR_6')$, see \cite[page 776]{falu}:
\begin{equation}\label{tau}
\sigma_*(\mathfrak a)=-48\lambda+7\pi^*(\delta_0),\ \sigma_*(\mathfrak b)=36\lambda-3\pi^*(\delta_0),\ \sigma_*(c_1(\mathcal{V}))=-22\lambda+3\pi^*(\delta_0).
\end{equation}
\vskip 3pt
We also introduce the sheaf $\calE:=p_*(\mathcal{P}\otimes \calL)$. Since $R^1p_*(\mathcal{P}\otimes \calL)=0$ (this is the point where we use $H^1(C, L\otimes \eta)=0$ for each $(C, \eta, L)\in \calR_6'$), applying Grauert's theorem we obtain that $\calE$ is locally free and
via Grothendieck-Riemann-Roch we compute its Chern classes. Taking into account that $p_*(c_1(\mathcal{P})^2)=\delta_0^{\mathrm{ram}}/2$ and $p_*(c_1(\calL)\cdot c_1(\mathcal{P}))=0$, see \cite[Proposition 1.6]{falu}, one computes
\begin{equation}\label{tau2}
c_1(\calE)=\lambda-\frac{\delta_0^{\mathrm{ram}}}{4}+\frac{\mathfrak{a}}{2}-\frac{\mathfrak{b}}{2}\in CH^1(\mathfrak G^2_6).
\end{equation}
Similarly, by GRR we find that $c_1(\mathcal{V}_2)=\lambda-\mathfrak{b}+2\mathfrak{a}$.

\vskip 3pt
After this preparation we return to the problem of describing the closure $\widetilde{v(\calX)}$ of $v(\calX)$ in $\calR_6'$. For a point
$(C, \eta, L)\in \mathfrak{G}^2_6$, the {}two-torsion point{} $\eta$ is induced by a conic totally tangent to the image of
$\nu:C\stackrel{|L|}\rightarrow \Gamma\subset \PP^2$, if and only if the map given by multiplication followed by projection
$$\chi(C, \eta, L):H^0(C, L\otimes \eta)\otimes H^0(C, L\otimes \eta)\rightarrow H^0(C, L^{\otimes 2})/\mathrm{Sym}^2 H^0(C, L)$$
is not an isomorphism. Working over the stack we obtain a morphism of vector bundles over $\mathfrak G^2_6$
$$\chi:\calE^{\otimes 2}\rightarrow \mathcal{V}_2/\mathrm{Sym}^2(\mathcal{V}_1),$$
such that the class of $\widetilde{v(\calX)}$ is (up to multiplicity) equal to
$$\sigma_* c_1\Bigl(\frac{\mathcal{V}_2}{\mathrm{Sym}^2(\mathcal{V}_1)}-\calE^{\otimes 2}\Bigr)=35\lambda-5(\delta_0^{'}+\delta_0^{''})-\frac{15}{2}\delta_0^{\mathrm{ram}}=5[\mathcal Z],$$
where we have used both (\ref{tau}) and (\ref{tau2}). We recall that the cycle $\mathcal Z$ was defined in the proof of Theorem \ref{Qclass} as a subvariety of the larger space $\pr_6$ with the property
that $\mathcal Z\cap \calR_6'=\calQ\cap \calR_6'$. Thus the class $[\widetilde{v(\calX)}]\in CH^1(\calR_6')$ is proportional (up to the divisor class $\delta_0^{''}$) to the class $[\widetilde{\calQ}]$. It is proved in \cite[Proposition 3.6]{faveni}  that if $D$ is an effective divisor on $\calR_6$ such that $[\overline{D}]=\alpha[\overline{\calQ}]+\beta\ \delta_0^{''}$, then one has the set-theoretic equality $D=\calQ$. Thus we conclude that the closure of ${v(\calX)}$ in $\calR_6$ is precisely $\calQ$.
\end{proof}

\begin{thm}\label{pencil1}\footnote{Added in April 2022: In the published version, the intersection numbers $R\cdot \delta_0^{\mathrm{ram}}$ and $R\cdot \delta_0^{'}$ are computed incorrectly. This is corrected here, with every detail provided. These changed intersection numbers are what lead to a slightly weaker version of Theorem 0.7 than in the published version, as per the following discussion.}
Through a general point of the ramification divisor $\overline{\calQ}$ there passes a rational curve $R\subset \rr_6$ with the following numerical features:
$$R\cdot \lambda=6, \ R\cdot \delta_0^{'}=27, \ R\cdot \delta_0^{''}=0, \ R\cdot \delta_0^{\mathrm{ram}}=10,
\ R\cdot \delta_i=R\cdot \delta_{i: 5-i}=0,$$ for $i=1, \ldots, 4$. In particular $R\cdot \qq=0$ and $R\cdot \uu=0$.
\end{thm}
Assuming for the moment Theorem \ref{pencil1}, we explain how it implies Theorem \ref{slopea5}. Assume that $E\in\mathrm{Eff}(\aa_5)$ with $s(E)\leq s(\overline{N_0'})$. First note that one can assume that $\overline{N_0'}\nsubseteq \op{supp}(E)$, for else, we can replace $E$ by an effective divisor of the form $E':=E-\alpha \overline{N_0'}$ with $\alpha>0$ and still $s(E')\leq s(\overline{N_0'})$. After rescaling by a positive factor, we can write
$E\equiv \overline{N_0'}-\epsilon \lambda_1 \in \mathrm{Eff}(\aa_5)$, where $\epsilon\geq 0$. Clearly we have $P^*(E)\in \mathrm{Eff}(\rr_6)$; observe that since $\overline{N_0'}$ is not a component of $E$, the ramification divisor $\qq$ cannot be a component of $P^*(E)$ either. Thus $R\cdot P^*(E)\geq 0$, that is,
$$
  0\leq R\cdot P^*(E)=R\cdot (2\qq+\uu+20\delta_0^{''})-\epsilon R\cdot \left(\lambda-\frac{\delta_0^{\mathrm{ram}}}{4}\right)=-\frac{7\epsilon}{2},
$$
which implies $\epsilon=0$. Thus $s(E)=s(\overline{N_0'})$ and $E$ cannot be a big divisor.

\begin{proof}[Proof of Theorem \ref{pencil1}] We retain the notation from Theorem \ref{qparametrisierung} and fix a general element $(C, \eta)\in \calQ$ corresponding to a sextic curve $\Gamma\subset \PP^2$ having nodes at $q_1, \ldots, q_4$. From Theorem \ref{qparametrisierung} we may assume that there exists a conic $Q\subset \PP^2$ such that $Q\cdot \Gamma=2(p_1+\cdots+p_6)$, where $p_1, \ldots, p_6\in \Gamma_{\mathrm{reg}}$. Since the points $q_1, \ldots, q_4\in \PP^2$ are distinct and no three are collinear, it follows that $[C]\notin \overline{\mathcal{GP}}_{6, 4}^1$, and this holds even when $C$ has nodal singularities.

To construct the pencil $R\subset \rr_6$, we reverse this construction and start with a conic $Q\subset \PP^2$ and six general points $p_1, \ldots, p_6\in Q$ on it. On the blowup $S'$ of $\PP^2$ at the $10$ points $q_1, \ldots, q_4, p_1, \ldots, p_6$, we denote by $\{E_{p_i}\}_{i=1}^6$ and by $\{E_{q_j}\}_{j=1}^4$  the exceptional divisors. For $1\leq i\leq 6$, let $l_i\in E_{p_i}$ be the point corresponding to the tangent line $T_{p_i}(Q)$. If $\tilde{S}$ is the blowup of $S'$ at $l_1 , \ldots, l_6$, by slight abuse of notation we denote by $E_{p_i}, E_{l_i}$ and $E_{q_j}$ the exceptional divisors on $\tilde{S}$ (respectively the proper transforms of exceptional divisors on $S'$).
Then $\dim\left|\calO_{\tilde{S}}(6)\left(-2\sum_{j=1}^4 E_{q_j}-\sum_{i=1}^6 (E_{p_i}+E_{l_i})\right)\right|=3$, and we choose a general pencil in this linear system. This pencil induces a curve $R\subset \rr_6$. Note that the pencil contains one distinguished element $t_0$, consisting of the union of $Q$ and two conics $Q_1$ and $Q_2$ passing through $q_1, \ldots, q_4$.

Considering the pushforward $\pi_*(R)\subset \mm_6$, after a routine calculation we find:
$$
 R\cdot \lambda=6,\ R\cdot (\delta_0^{'}+\delta_0^{''}+2\delta_0^{\mathrm{ram}})=\pi_*(R)\cdot \delta_0=47, \ \pi_*(R)\cdot \delta_i=0\ \mbox{ for } i=1, 2, 3.
$$
In particular, as expected $R\cdot \uu=0$. The points in $R\cap \Delta_0^{\mathrm{ram}}$ correspond to the case when the underlying Prym structure is not locally free, which happens when one of the points $p_i$ becomes singular. For each $1\leq i\leq 6$, there is one such curve in $R$.

\vskip 3pt

We are left with the task of determining the multiplicity $(R\cdot \delta_0^{\mathrm{ram}})_{t_0}$. To that end we describe in more detail the Prym structure of the curves in the pencil $R$. We denote by $\varphi: \PP^1\times \PP^1\rightarrow \PP^2$ the double cover branched along the conic $Q$. For a sextic curve $\Gamma\subset \PP^2$ nodal at $q_1, \ldots, q_4$ and with $\Gamma\cdot Q=2(p_1+\cdots+p_6)$, we observe that $\Gamma':=\varphi^{-1}(\Gamma)$ has nodes at the points in $\varphi^{-1}(q_1)\cup \ldots \cup  \varphi^{-1}(q_4)$, as well as at $\varphi^{-1}(p_i)$, for $i=1, \ldots, 6$. To $\Gamma$ we associate the \'etale double cover $C'\rightarrow C$, where $C$ is the normalization of $\Gamma$ and $C'$ is the normalization of $\Gamma'$. Assume now $\Gamma_0:=Q\cup Q_1\cup Q_2$ is the curve corresponding to the point $t_0\in R$ and set $Q\cap Q_1=\{z_1, \ldots, z_4\}$ and $Q\cap Q_2=\{z_1', \ldots, z_4'\}$. We denote by $C_0$ the partial normalization of $\Gamma_0$ at the $4$ points of intersection $Q_1\cdot Q_2$. Applying stable reduction, the associated double cover $C'_0\rightarrow C_0$ has as source curve $C_0'$ the union $Q'\cup Q_1'\cup Q_2'$, where $Q_i'$ is the double cover of $Q_i$ ramified over the $4$ points in $Q\cdot Q_i$, whereas $Q'$ is the hyperelliptic genus $3$ cover of $Q$ ramified over $z_1, z_1', \ldots, z_4, z_4'$. Note that $Q_1'$ and $Q_2'$ are disjoint, hence $p_a(C_0')=g(Q')+g(Q_1')+g(Q_2')+2\cdot(4-1)=11$. Since $C_0'\rightarrow C_0$ is ramified over each of the eight nodes $z_1, z_1', \ldots, z_4, z_4'$ of $C_0$, using for instance \cite[Remark 1.2]{falu}, it follows that the Prym curve associated to this cover is of the form $[X, \eta, \beta]$, where $X$ is the quasi-stable curve obtained from $C_0$ by inserting smooth rational components $E_1, E_1', \ldots, E_4, E_4'$ at the points $z_1, z_1', \ldots, z_4, z_4'$ respectively, the line bundle $\eta\in \mbox{Pic}^0(X)$ satisfies $\eta_{E_i}=\mathcal{O}_{E_i}(1)$ and $\eta_{E_i'}=\mathcal{O}_{E_i'}(1)$, whereas if $X':=\overline{X\setminus \bigcup_{i=1}^4 (E_i\cup E_i')}$, then
$$\eta^{\otimes 2}=\mathcal{O}_{X'}\Bigl(-\sum_{i=1}^4 (x_i+y_i+x_i'+y_i')\Bigr),$$ where
 $\{x_i, y_i\}=E_i\cap \overline{X\setminus E_i}$ and $\{x_i', y_i'\}=E_i'\cap \overline{X\setminus E_i'}$ for $i=1, \ldots, 4$.

\vskip 3pt

We denote by $\mathbb C_{\tau}^{3g-3}$ the versal deformation space of $[X, \eta, \beta]\in \overline{\mathcal{R}}_6$ and choose local coordinates 
$\tau_1, \ldots, \tau_{3g-3}$ such that for $i=1, \ldots, 8$, the hyperplane $(\tau_i=0)$ corresponds to the locus where the exceptional component $E_i$ persists. Furthermore, if $\mathbb C_t^{3g-3}$ is the versal deformation space of the stable model $C_0$ of $X$ and $\mathcal{C}\rightarrow \mathbb C^{3g-3}_t$ is the universal family, we consider the map
$\mathbb C^{3g-3}_{\tau}\rightarrow \mathbb C_t^{3g-3}$ given by $t_i=\tau_i^2$ for $i=1, \ldots, 8$ and $t_i=\tau_i$ for $9\leq i\leq 3g-3=15$. Then the universal family $\mathcal{X}\rightarrow \mathbb C_{\tau}^{3g-3}$ of Prym curves of genus $6$ is obtained from the fibre product
$\mathcal{C}':=\mathcal{C}\times_{\mathbb C_t^{3g-3}} \mathbb C_{\tau}^{3g-3}$ by blowing-up the codimension two loci corresponding to the sections $(\tau_i=0)\rightarrow \mathcal{C}'$. It follows that the intersection multiplicity of $R\times_{\overline{\mathcal{R}}_6}\mathbb C_{\tau}^{3g-3}$ with the locus $(\tau_1 \cdots \tau_8)=0$ is equal to $8$ and accordingly 
$$(R\cdot \delta_0^{\mathrm{ram}})_{t_0}=\frac{1}{2}\Bigl(R\cdot \pi^*(\delta_0)\Bigr)_{t_0}=\frac{1}{2}\Bigl(\pi_*(R)\cdot \delta_0\Bigr)_{[C_0]}=\frac{8}{2}=4,$$ therefore
$R\cdot \delta_0^{\mathrm{ram}}=6+4=10$. Observing that $t_0\in R$ is the only point in the pencil corresponding to a reducible double cover, we also conclude that $R\cdot \delta_0^{''}=0$, thus $R\cdot \delta_0^{'}=27$. Using Corollary \ref{qclcom}, we conclude that
$$R\cdot \qq=7R\cdot \lambda-R\cdot \delta_0^{'}-\frac{3}{2}R\cdot \delta_0^{\mathrm{ram}}=42-27-15=0.$$
\end{proof}




\begin{rem}
Theorem \ref{slopea5} implies that the divisor $\overline{N_0'}$ can be contracted via a birational map having $\aa_5$ as its source. Especially from the point of view of the Minimal Model Program for $\aa_5$, it would be interesting to find a new compactification of the moduli space of ppav $\calA_5^*$, and a birational map $f:\aa_5\dashrightarrow \calA_5^*$ such that $f$ contracts $\overline{N_0'}$.
\end{rem}

\section{The Prym realization of the components of $H$}

For each irreducible component of $H=N_0^{' 4}$ in $\calA_5$, we describe an explicit codimension $2$ subvariety of $\calR_6$ which dominates it
via the Prym map. As a consequence, we prove that $H$ consists of two irreducible components, both unirational and of dimension $13$. We define two subvarieties of $\calR_6$ corresponding to Prym curves $(C, \eta)$ such that $\varphi_{K_C\otimes \eta}$ lies on a quadric of rank at most $4$, cutting a (Petri special) pencil on $C$. Depending on the degree of this pencil, we denote these loci by $\calQ_4$ and $\calQ_5$ respectively.

\begin{df}
We denote by $\calQ_5$ the closure in $\calR_6$ of the locus of curves $(C, \eta)\in \calR_6$ such that $C$ carries two vanishing theta characteristics $\theta_1, \theta_2\in W^1_5(C)$ with $\eta=\theta_1\otimes \theta_2^{\vee}$.
\end{df}
\noindent Equivalently,  $K_C\otimes \eta=\theta_1\otimes \theta_2$, which implies that the Prym-canonical model of $C$ lies on a quadric $Q\subset \PP^4$ of rank $4$, whose rulings induce $\theta_1$ and $\theta_2$ respectively.

\begin{df}
We denote by $\calQ_4$ the closure in $\calR_6$ of the locus of curves $(C, \eta)\in \calR_6$ such that $\eta\in W^1_4(C)-W_4(C)$ and $K_C\otimes \eta$ is very ample.
\end{df}
Equivalently, $K_C\otimes \eta=A\otimes A'$, where $A\in W^1_4(C)$ and $A'\in W^1_6(C)$, and then the image $\varphi_{K_C\otimes \eta}(C)$ lies on a quadric $Q\subset \PP^4$ of rank at most $4$, whose rulings cut out $A$ and $A'$ respectively.

\begin{rem}
Along the same lines, one can consider the locus $\calQ_3$ of curves $(C, \eta)\in \calR_6$ such that $K_C\otimes \eta=A\otimes A'$, where
$A\in W^1_3(C)$ and $A'\in W^1_7(C)$. Observe that $\calQ_3=\pi^{-1}(\calM_{6, 3}^1)$, where $\calM_{6, 3}^1$ is the trigonal locus inside $\calM_6$. In particular, $\op{codim}(\calQ_3, \calR_6)=2$. However from the trigonal construction \cite[Section 2.4]{dofibers}, it follows that $P(\calQ_3)=\calJ_5$, that is, $P$ blows-down $\calQ_3$ and thus $\calQ_3$ plays no further role in describing the components of $H$ in $\calA_5$.
\end{rem}

First we show that $\calQ_4$ lies both in the ramification and the antiramification divisor of the Prym map:

\begin {prop}
$\calQ_4 \subseteq \calQ \cap \calU$.
\end{prop}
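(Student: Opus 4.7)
The plan is to verify the two inclusions $\calQ_4\subseteq\calQ$ and $\calQ_4\subseteq\calU$ separately, the first via a quadric extracted from the product decomposition $K_C\otimes\eta=A\otimes A'$ and the second via the stable/exceptional singularity dichotomy developed in Section~5.

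For $\calQ_4\subseteq\calQ$, I would fix $(C,\eta)\in\calQ_4$ and the factorization $K_C\otimes\eta=A\otimes A'$ with $A\in W^1_4(C)$, $A'\in W^1_6(C)$, choose bases $s_0,s_1\in H^0(A)$ and $t_0,t_1\in H^0(A')$, and invoke the tautological Koszul identity $(s_0t_0)(s_1t_1)=(s_0t_1)(s_1t_0)$ in $H^0(K_C^{\otimes 2})$. The base-point-free pencil trick identifies the kernel of the multiplication $H^0(A)\otimes H^0(A')\to H^0(K_C\otimes\eta)$ with $H^0(A'\otimes A^{-1})$, which vanishes on a dense open subset of $\calQ_4$; there the four products $s_it_j$ are linearly independent, and the Koszul identity becomes a nonzero rank-four element in $I_2(K_C\otimes\eta):=\ker\bigl(\Sym^2 H^0(K_C\otimes\eta)\to H^0(K_C^{\otimes 2})\bigr)$. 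By the definition of $\calQ$ at the start of Section~6 this places $(C,\eta)$ in the ramification divisor, and the inclusion extends to all of $\calQ_4$ by closure.

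For $\calQ_4\subseteq\calU$, the identification $\calU=\pi^{*}(\mathcal{GP}^1_{6,4})$ reduces the task to showing that $A$ itself is Petri-special, i.e.\ $h^0(K_C\otimes A^{\otimes(-2)})\geq 1$. Since $(C,\eta)\in\calQ$ by the previous step, Theorem~\ref{ramantiram} produces a stable singularity $L\in V_3(C,\eta)$ whose Pfaffian quadric $Q_L$ lies in $I_2(K_C\otimes\eta)$. For a generic point of $\calQ_4$ this ideal is one-dimensional, generated by the rank-four quadric $Q$ of the previous step, so $\rk(Q_L)\leq 4$. Proposition~\ref{rk4sing} then places $L$ simultaneously in $\Sing^{\mathrm{st}}_f(\Xi)$ and in $\Sing^{\mathrm{ex}}_f(\Xi)$, so one may write $L=f^{*}M\otimes\calO_{\tilde C}(B)$ with $h^0(M)\geq 2$, and the quadratic multiplicity-two condition forces $h^0(M)=2$ so that $M$ is a base-point-free pencil. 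To finish, I would identify $M$ with $A$. The degree relation $\deg L=10=2\deg M+\deg B$ together with the effectivity of $B$ forces $\deg M\leq 5$; the case $\deg M=5$ would make both $M$ and $M\otimes\eta$ vanishing theta characteristics, placing $(C,\eta)$ in $\calQ_5$ rather than the generic stratum of $\calQ_4$, while $\deg M\leq 3$ would force $C$ to be hyperelliptic or trigonal. Ruling these out leaves $\deg M=4$, and matching with the two rulings of the rank-four quadric $Q$ (which cut out pencils of degree four and six on $C$, namely $A$ and $A'$) identifies $M$ with $A$. The Norm relation $K_C=A^{\otimes 2}\otimes\calO_C(f_{*}B)$ then exhibits $f_{*}B\in|K_C\otimes A^{\otimes(-2)}|$, so $A$ is Petri-special, $[C]\in\mathcal{GP}^1_{6,4}$, and $(C,\eta)\in\calU$.

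The main obstacle will be the identification $M=A$, and more specifically the exclusion of the degree-five case $M^{\otimes 2}=K_C$, which amounts to distinguishing the generic stratum of $\calQ_4$ from $\calQ_5$ via the shape of the decomposition of $K_C\otimes\eta$; a secondary point requiring care is the claim that $I_2(K_C\otimes\eta)$ is one-dimensional at a general point of $\calQ_4$, which is what forces the Pfaffian quadric $Q_L$ to be proportional to the concrete rank-four quadric $Q$ built from $A$ and $A'$.
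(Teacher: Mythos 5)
Your first inclusion, $\calQ_4\subseteq\calQ$, is correct and is essentially how the paper sees it: the decomposition $K_C\otimes\eta=A\otimes A'$ puts the Prym-canonical curve on a quadric of rank at most $4$, so the multiplication map $\Sym^2 H^0(K_C\otimes\eta)\to H^0(K_C^{\otimes 2})$ has a kernel. For the second inclusion the paper argues quite differently, and much more directly: writing $\eta=M\otimes\calO_C(-D)$ with $M\in W^1_4(C)$ and $D\in C_4$, one computes $h^0(K_C\otimes\eta(-D))=h^0(K_C\otimes M^{\vee})=3$, so $D$ spans a $4$-secant line $\ell$ to $\varphi_{K_C\otimes\eta}(C)$; this line lies on the rank-$4$ quadric but not in a plane of the ruling $\Lambda$ cutting out $M$ (else $\eta=\calO_C$), so $\ell$ meets each plane of $\Lambda$ in one point, and the hyperplane spanned by $\ell$ and the plane of a general $d_M\in|M|$ cuts out $d_M+D+x+y$ on $C$. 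Since $\calO_C(D)=M\otimes\eta$, this gives $K_C=M^{\otimes 2}(x+y)$ at once. No appeal to theta-divisor singularities is needed.

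Your route for $\calQ_4\subseteq\calU$, via a stable singularity $L\in V_3(C,\eta)$ and Proposition \ref{rk4sing}, has a genuine gap at exactly the point you flag: the claim that $I_2(K_C\otimes\eta)$ is one-dimensional at a general point of $\calQ_4$. This is the linchpin --- without it $Q_L$ need not be proportional to the rank-$4$ quadric built from $A\otimes A'$, so you cannot conclude $\rk(Q_L)\leq 4$ and Proposition \ref{rk4sing} does not apply. A general point of $\calQ_4$ is a codimension-one special point of $\calQ$, so the generic simple-ramification statement (corank one of $dP$) is not available; and the paper itself exhibits a codimension-two locus, $\calQ_4'$, where the Prym-canonical image lies on a whole pencil of quadrics, so one-dimensionality of $I_2$ on a codimension-two stratum is not something one gets for free. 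Two secondary gaps: you never rule out $\mathrm{Pf}(L)\equiv 0$, in which case $L$ is not a quadratic singularity and Proposition \ref{rk4sing} is again inapplicable; and your exclusion of $\deg M=5$ rests on a general point of $\calQ_4$ not lying in $\calQ_5$, which is plausible but needs an argument, since both loci have codimension two and the irreducibility of $\calQ_4$ is only established later in the paper. (By contrast, the final identification $M=A$ that you single out as the main obstacle is actually unnecessary: once $\deg M=4$ and $K_C=M^{\otimes 2}\otimes\calO_C(f_*B)$, any such $M$ witnesses $[C]\in\mathcal{GP}^1_{6,4}$.) The paper's $4$-secant-line argument sidesteps all of this.
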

\begin{proof} We choose a point $(C, \eta)\in \calQ_4$ general in a component of $\calQ_4$ and write $\eta=M\otimes \calO_C(-D)$, where $M\in W^1_4(C)$ and $D\in C_4$ is an effective divisor. Then we compute
$$
h^0(C, K_C\otimes\eta(-D)) = h^0(K_C\otimes M^{\vee}) = 3,
$$
that is, $\ell:=\langle D\rangle$ is a 4-secant line to the Prym canonical model $\varphi_{K_C\otimes \eta}(C)$. Moreover $\ell$ is contained in the rank $4$ quadric $Q$ whose rulings cut out on $C$ the pencils $M$ and $K_C\otimes \eta\otimes M^{\vee}$ respectively.  The line $\ell$  is not contained in a plane of $Q$ belonging to
the ruling $\Lambda$ that cuts out on $C$ the pencil $M$, for else it would follow that $\eta=0$. Then  $\ell$ is unisecant to the planes in $\Lambda$ and if $d_M\in |M|$ is a general element,  then
$\langle D+d_M\rangle$
is a hyperplane in $\PP^4$. Thus
$$K_C\otimes \eta=\calO_C(d_M+D+x+y),$$
where $x, y\in C$, that is, $H^0(C, K_C\otimes M^{\otimes (-2)})\neq 0$, and $[C]\in \mathcal{GP}_{6, 4}^1$.
 \end{proof}
\begin{prop}
The locus $\calQ_4$ is unirational and of dimension 13.
\end{prop}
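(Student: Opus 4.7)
The strategy parallels the treatment of $\calQ_5$: use the characterisation of a general $(C,\eta)\in \calQ_4$ via its Prym-canonical image on a rank-four quadric, and then realise the resulting family of curves explicitly on a ruled surface.

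\smallskip
\textbf{Dimension bound.} The previous proposition gives $\calQ_4 \subseteq \calQ \cap \calU$. By Theorem \ref{ramantiram}, $\calQ$ is an irreducible divisor on $\calR_6$, and by the computation in Section 7, $\calU = \pi^*(\mathcal{GP}_{6,4}^1)$ is a distinct irreducible divisor (the classes $[\widetilde{\calQ}]$ and $[\widetilde{\calU}]$ computed in Corollary \ref{qclcom} and the formula for $[\overline{\mathcal{GP}}_{6,4}^1]$ are not proportional in $CH^1(\pr_6)$). Hence $\calQ \cap \calU$ is of pure codimension two, and every irreducible component of $\calQ_4$ has dimension at most $\dim \calR_6 - 2 = 13$.

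\smallskip
\textbf{Unirational parametrisation.} For a general $(C,\eta) \in \calQ_4$ with $K_C \otimes \eta = A \otimes A'$, $A \in W^1_4(C)$ and $A' \in W^1_6(C)$, the Prym-canonical image $\varphi_{K_C\otimes \eta}(C) \subset \PP^4$ lies on a rank-four quadric $Q$, which is a cone with vertex $v$ over a smooth quadric $\PP^1 \times \PP^1 \subset \PP^3$. Projecting from $v$ realises $C$ as a nodal curve $\bar C$ of bidegree $(4,6)$ on $\PP^1 \times \PP^1$ with exactly $9$ nodes, so that $p_a(\bar C) - \delta = (4-1)(6-1) - 9 = 6 = g$, the two rulings cutting out $A$ and $A'$ on the normalisation $\tilde C$. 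Reversing the construction, writing $\nu:\tilde C\to \bar C$ and denoting by $p_i^{\pm}$ the two preimages of the $i$-th node, the Prym datum is recovered from
$$
K_C \otimes \eta \;=\; \nu^*\calO_{\PP^1\times\PP^1}(1,1),\qquad \eta \;=\; \nu^*\calO_{\PP^1\times\PP^1}(-1,-3) + \sum_{i=1}^{9}(p_i^+ + p_i^-) \;\in\; \Pic^0(\tilde C).
$$
I would then parametrise $\calQ_4$ by those $9$-nodal curves in $|\calO_{\PP^1\times \PP^1}(4,6)|$ whose nine nodes are configured so that the displayed class $\eta$ is a \emph{non-zero} two-torsion element of $\Pic^0(\tilde C)$, mimicking the model used for $\calQ_5$ in the proof of Theorem \ref{q5}.

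\smallskip
\textbf{Dimension count and main obstacle.} The naive count reads: $\dim|\calO_{\PP^1\times\PP^1}(4,6)|=34$; the Severi locus of $9$-nodal curves has dimension $25$; requiring $\eta$ to be two-torsion imposes $6$ further conditions, since $\dim \Pic^0(\tilde C)=6$ and the two-torsion locus is discrete; finally modding out by $\op{Aut}(\PP^1\times \PP^1)$ of dimension $6$ yields $25-6-6=13$, matching the upper bound from the first step. The principal technical obstacle is producing the two-torsion constraint by a \emph{rational} parametrisation: the immediate temptation is to place the nine nodes on a single divisor $R\in |\calO(1,3)|$, but a direct calculation then shows that $\sum(p_i^++p_i^-)=\nu^*\calO(1,3)$ and hence $\eta=0$, so the required non-trivial two-torsion cannot arise from so naive a configuration. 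I would therefore pass to an auxiliary pair of divisors $R_1,R_2$ on $\PP^1 \times \PP^1$ (or to an appropriate Hirzebruch surface) chosen so that $R_1\cdot R_2=9$, the $9$ nodes are cut out as $R_1\cap R_2$, and the induced offset of $\sum(p_i^++p_i^-)$ from $\nu^*\calO(1,3)$ is automatically a non-trivial two-torsion class. The linear system $|\mathcal{I}^2_{R_1\cdot R_2/\PP^1\times \PP^1}(4,6)|$ on the ambient surface is rational, and checking that the induced rational map onto $\calQ_4$ is dominant (using the divisor-class arithmetic already carried out in the proof of the preceding proposition $\calQ_4\subseteq \calQ\cap \calU$) completes the unirationality.
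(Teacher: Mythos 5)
Your upper bound is fine: since $\calQ$ is irreducible and dominates $\calM_6$ while every component of $\calU=\pi^*(\mathcal{GP}^1_{6,4})$ lies over the proper subvariety $\mathcal{GP}^1_{6,4}$, the divisor $\calQ$ is not a component of $\calU$, so $\calQ\cap\calU$ is pure of codimension $2$ and $\dim \calQ_4\le 13$. The problem is the parametrisation, which is where all the content of the proposition lives. You correctly reduce to exhibiting $9$-nodal curves $\bar C\in|\calO_{\PP^1\times\PP^1}(4,6)|$ for which $\eta=\nu^*\calO(-1,-3)+\sum_i(p_i^++p_i^-)$ is a \emph{non-zero} $2$-torsion class, and you correctly observe that the naive configuration (nodes on an $R\in|\calO(1,3)|$) forces $\eta=0$. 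But your proposed remedy does not survive inspection. If the nine nodes are to be cut out as $R_1\cap R_2$ with $R_1\cdot R_2=9$, then each $R_j$ contains all nine nodes, hence $R_j\cdot\bar C\ge 18$, i.e.\ $6a_j+4b_j\ge 18$ for $R_j\in|\calO(a_j,b_j)|$. Running through the solutions of $a_1b_2+a_2b_1=9$ compatible with this, one either lands on a pair containing an $R_j\in|\calO(1,3)|$ --- in which case $R_j\cdot\bar C=18$ forces $\nu^*R_j=\sum_i(p_i^++p_i^-)$ and again $\eta=0$ --- or on pairs (such as $(1,4)$ and $(1,5)$) where $\nu^*R_j-\sum_i(p_i^++p_i^-)$ is a nonzero effective residual divisor and the $2$-torsion condition becomes an extra, unparametrised constraint that is certainly not ``automatic.'' On $\PP^1\times\PP^1$ there is no torsion in $\Pic$, so some genuinely different mechanism (an even set of nodes, or a totally tangent curve) is needed to manufacture $\eta$; your construction never supplies one, and your dimension count ($25-6-6=13$) treats the $2$-torsion requirement as six transverse conditions without justification. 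This is a genuine gap, not a detail.

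For comparison, the paper avoids the rank-$4$ quadric model altogether and instead exploits $\calQ_4\subset\calQ\cap\calU$ together with the fact that every curve in $\mathcal{GP}^1_{6,4}$ is a quadratic section of a $1$-nodal quintic del Pezzo surface $S'$ (a $4$-nodal plane sextic $\Gamma$). The two-torsion class is produced by a conic $Q$ totally tangent to $\Gamma$, via $\eta=\calO_C(1)(-p_1-\cdots-p_6)$ with $Q\cdot\Gamma=2(p_1+\cdots+p_6)$; here $2\eta=0$ is automatic and nontriviality is easy. The real work is then a superabundance statement: the linear system of $4$-nodal sextics totally tangent to $Q$ at $p_1,\ldots,p_6$ has dimension $4$ rather than the expected $3$, which is exactly what makes the total space a $\PP^4$-bundle of dimension $14$ over a rational base, hence a $13$-dimensional unirational variety dominating $\calQ_4$ after dividing by $\op{Aut}(S')\cong \mathbf{C}^*$. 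If you want to salvage your approach, you would need an analogous tangency mechanism adapted to the asymmetric $(4,6)$ model, together with a superabundance computation replacing your naive count.
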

\begin{proof}
Since $\calQ_4\subset \calQ\cap \mathcal{U}$, we use the fact that \emph{every} curve $[C]\in \mathcal{GP}^1_{6, 4}$ is a quadratic section of a nodal quintic del Pezzo surface. In the course of proving Theorem \ref{pencil1} we observed that a general Prym curve $(C, \eta)\in \calU$ is characterized by the existence of a totally tangent conic. We show that a similar description carries over to the case of $1$-nodal del Pezzo surfaces.

We fix collinear points $q_1, q_2, q_3\in \PP^2$, a general point $q_4\in \PP^2$, then denote by $\ell:=\langle q_1, q_2, q_3\rangle\subset \PP^2$, by $S':=\mbox{Bl}_{\{q_i\}_{i=1}^4}(\PP^2)\rightarrow \PP^2$ the surface whose image by the linear system $\left|\calO_{S'}(2)\bigl(-\sum_{i=1}^4 E_{q_i}\bigr)\right|$ is a $1$-nodal del Pezzo quintic. Set $\PP^3_{S'}:=\left|\calO_{S'}(3)\bigl(-\sum_{i=1}^3 E_{q_i}-2E_{{q_4}}\bigr)\right|$. Note that $\op{Aut}(S')=\bf C^*$. We consider the $10$-dimensional rational variety
$$\mathcal{V}:=\{(Q, p_1, \ldots, p_5): Q\in |\calO_{\PP^2}(2)|,\ p_1, \ldots, p_5\in Q\}$$
and the rational map $p:\mathcal{V}\dashrightarrow \PP^2$ given by $p((Q, p_1, \ldots, p_5)):=p_6$, where $p_6$ is the residual point of intersection of $Q$ with the unique cubic $E\in |\calO_{\PP^2}(3)|$ passing through $q_1, \ldots, q_4, p_1, \ldots, p_5$. We consider the linear system $$\PP_{(Q, p_1, \ldots, p_5)}:=\Bigr\{\Gamma\in \Bigl|\calO_{S'}(6)\bigl(-2\sum_{i=1}^4 E_{q_i}\bigr)\Bigr|: \Gamma\cdot Q=2(p_1+\cdots+p_6)\Bigr\}.$$
\vskip 3pt

\noindent {\bf{Claim:}} For a general $(Q, p_1, \ldots, p_5)\in \mathcal{V}$, we have  $\dim\ \PP_{(Q, p_1, \ldots, p_5)}=4$, that is, the points $q_1, \ldots, q_4, p_1, \ldots, p_6$ fail to impose one independent condition on $4$-nodal sextic curves.
\vskip 3pt

Since $\ell+Q+\PP^3_{S'}\subset \PP_{(Q, p_1, \ldots, p_5)}$, to conclude that $\dim \PP_{(Q, p_1, \ldots, p_5)}\geq 4$, it suffices to find \emph{one} curve $\Gamma\in \PP_{(Q, p_1, \ldots, p_5)}$ that does not have $\ell$ as a component. We choose $(Q, p_1, \ldots, p_5)\in \mathcal{V}$ general enough that the corresponding cubic $E$ is smooth. Then $2E\in \PP_{(Q, p_1, \ldots, p_5)}$ and obviously $\ell\nsubseteq 2E$.
To finish the proof of the claim, we exhibit a point $(Q, p_1, \ldots, p_5)\in \mathcal{V}$ such that $\dim(\PP_{(Q, p_1, \ldots, p_5)})=4$.
We specialize to the case $p_1\in \ell$ and let $Q_2$ be the conic determined by $p_2, \ldots, p_5$ and $q_4$. The cubic $E$ must equal $\ell+Q_2$ and $p_6\in \ell\cap Q$, so that $E \cdot Q=p_1+\cdots+p_6$. Then $\PP_{(Q, p_1, \ldots, p_5)}=\ell+\PP'$, where
$$\PP':=\Bigl\{Y\in \left|\calO_{S'}(5)\bigl(-\sum_{i=1}^3 E_{q_i}-2E_{q_4}\bigr)\right|: Y\cdot Q_2=p_1+p_6+2(p_2+p_3+p_4+p_5)\Bigr\}.$$
Because $p_2, \ldots, p_5\in \PP^2$ are general, $\dim(\PP')=20-3-3-2-8=4$, which completes the proof of the claim.

\vskip 3pt
We now consider the $\PP^4$-bundle $\mathcal{P}:=\left\{(Q, p_1, \ldots, p_5, \Gamma): \Gamma \in \PP_{(Q, p_1, \ldots, p_5)}\right\}$, together with the map
$u:\mathcal{P}\dashrightarrow \calR_6$, given by
$$u((Q, p_1, \ldots, p_5, \Gamma)):=\bigl(C, \eta:=\calO_C(1)(-p_1-\cdots-p_6)\bigr),$$
where $C\subset S'$ is the normalization of $\Gamma$. Then $M:=\calO_C(2)(-\sum_{i=1}^4E_{q_i})\in W^1_4(C)$ is Petri special and
$|M\otimes \eta|\cong |\calO_{S'}(3)(-\sum_{i=1}^4 E_{q_i}-\sum_{j=1}^6 p_j)|\neq \emptyset$, hence $u(\mathcal{P})\subset \calQ_4$. Therefore there is an induced
map $\bar{u}:\mathcal{P}\dblq \op{Aut}(S')\dashrightarrow \calQ_4$ between $13$-dimensional varieties. Since every curve $(C, \eta)\in \calQ_4$ has a totally tangent conic and can be embedded in $S'$, it follows that any  $M\in W^1_4(C)$ with $h^0(C, M\otimes \eta)\geq 1$ appears in the way described above, which finishes the proof.
\end{proof}

Another distinguished codimension $2$ cycle in $\calR_6$ is the locus
$$\calQ_4':=\left\{(C, \eta)\in \calR_6: \eta\in W_2(C)-W_2(C)\right\}$$
of Prym curves $(C, \eta)$ for which $\varphi_{K_C\otimes \eta}$ fails to be very ample. Writing $\eta=\calO_C(a+b-p-q)$, with $a, b, p, q\in C$, then $M:=\calO_C(2a+2b)\in W^1_4(C)$ and the $2$-nodal image curve $\varphi_{K_C\otimes \eta}(C)$ lies on a pencil of quadrics in $\PP^4$, thus also on a singular quadric of type $(4, 6)$. We show however, that this quadric is \emph{not} the projectivized tangent cone of a quadratic singularity $L\in \Sing_{(C, \eta)}^{\mathrm{st}}(\Xi)$, hence points in $\calQ_4'$ do not constitute a component of $P^{-1}(H)$.
\begin{prop}\label{notveryample}
We have $\calQ_4'\nsubseteq \calU$. In particular, all singularities of the Prym theta divisor corresponding to a general point of $Q_4'$ are
ordinary double points, that is, $P(\calQ_4')\nsubseteq H$.
\end{prop}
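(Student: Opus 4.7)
The plan is to first establish that $\calQ_4'\nsubseteq \calU$, from which the ordinary double point statement then follows from Proposition~\ref{rk4sing} together with Lemma~\ref{excsing1}. For a point $(C,\eta)\in \calQ_4'$ with $\eta=\calO_C(a+b-p-q)$ and $M:=\calO_C(2a+2b)\in W^1_4(C)$, the natural obstruction to Petri-generality for $M$ is the non-vanishing of
$$h^0(C,K_C\otimes M^{\otimes -2})=h^0(C,K_C-4(a+b)),$$
by the base-point-free pencil trick. Since $K_C-4(a+b)$ has degree $2$, its being effective is a codimension-one condition on the pair $(C,a+b)$ (it cuts out a proper subvariety of the $2$-dimensional parameter $a+b\in C_2$). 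Hence on a dense open subset of $\calQ_4'$ the particular pencil $M$ arising from the two-torsion is Petri-regular. To upgrade this to the stronger statement $C\notin \mathcal{GP}_{6,4}^1$, I would exhibit an explicit Petri-general genus-$6$ curve $C$ carrying a two-torsion $\eta\in W_2(C)-W_2(C)$ --- for instance a smooth quadratic section of a smooth del Pezzo quintic $S\subset \PP^5$ (automatically Petri-general) equipped with such an $\eta$, whose existence can be verified by a short deformation of $(S,C)$ across the $15$-dimensional family of del Pezzo sections.

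Given this, let $(C,\eta)$ be a generic point of $\calQ_4'$, so that $C\notin \mathcal{GP}_{6,4}^1$. By Lemma~\ref{excsing1}, every exceptional singularity of $\Xi$ would require a pencil $M_0$ on $C$ with $h^0(C,M_0)\geq 2$ and non-injective Petri map, contradicting Petri-generality; hence $\Sing_f^{\mathrm{ex}}(\Xi)=\emptyset$. Any stable singularity $L\in V_3(C,\eta)$ must then be quadratic with $\mathrm{Pf}(L)\not\equiv 0$ (otherwise Lemma~\ref{excsing1} applied to every decomposable vector in $\wedge^2 H^0(\tilde{C},L)$ would again force exceptionality), and Proposition~\ref{rk4sing} forces $\rk(Q_L)=5$, since $\rk(Q_L)\leq 4$ would make $L$ simultaneously exceptional. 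A rank-$5$ quadric in $\PP^4$ is non-degenerate, so $L$ is an ordinary double point of $\Xi$. Consequently, all singularities of the Prym theta divisor at a generic point of $\calQ_4'$ are ordinary double points, and therefore $P(C,\eta)\notin H$.

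The principal obstacle is the explicit existence step: producing a genuinely Petri-general $[C]\in \calM_6$ lying in $\pi(\calQ_4')$. A clean avenue is via degeneration in the partial compactification $\pr_6$, where the two-torsion structure of a boundary Prym curve can be read off combinatorially and then spread back to the smooth interior; alternatively, one may construct the desired $(C,\eta)$ geometrically from a K3 (or a nodal quintic del Pezzo) surface on which the two divisors $a+b$ and $p+q$ arise from tangential or ruling data. Either route produces the single example that, combined with the codimension estimate above, establishes $\calQ_4'\nsubseteq \calU$.
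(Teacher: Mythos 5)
Your endgame is sound and coincides with the paper's: once one knows that the underlying curve of a general point of $\calQ_4'$ carries no Petri-special pencil, Lemma~\ref{excsing1} kills $\Sing^{\mathrm{ex}}_f(\Xi)$, every stable singularity is then quadratic and non-exceptional, and Proposition~\ref{rk4sing} forces $\rk(Q_L)=5$, i.e.\ an ordinary double point. Indeed the paper's whole proof is the two clauses ``Note that $\calQ_4'$ is not contained in $\calU$, then use Proposition~\ref{rk4sing}'', so your second half actually supplies more detail than the original. One caution there: to rule out \emph{all} exceptional singularities you need $C$ to have no Petri-special pencil of any degree, hence also to avoid the trigonal locus and $\mathcal{GP}_{6,5}^1$, not merely $\mathcal{GP}_{6,4}^1$; you correctly invoke full Petri-generality, but that is strictly more than the literal assertion $\calQ_4'\nsubseteq\calU$ provides.

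The genuine gap is the first claim, $\calQ_4'\nsubseteq\calU$ --- which is the proposition's actual assertion --- and you explicitly leave it as ``the principal obstacle''. Neither of your ingredients closes it. The Petri-regularity of the distinguished pencil $M=\calO_C(2a+2b)$ controls none of the other $g^1_4$'s on $C$ (as you note), and the count offered for it is not the relevant one: the condition $h^0(C,K_C\otimes M^{\otimes (-2)})\geq 1$ is equivalent to $M^{\otimes 2}\in W^3_8(C)$, and the question is whether this is forced along the $13$-dimensional locus $\calQ_4'$, not its codimension in $C_2$ for a fixed general curve. More seriously, your proposed witness --- a general smooth quadratic section of a smooth quintic del Pezzo --- is just a general genus-$6$ curve, and since $\pi(\calQ_4')$ has codimension $2$ in $\calM_6$ such a curve carries \emph{no} two-torsion $\eta\in W_2(C)-W_2(C)$ whatsoever; producing the required $\eta$ is exactly the nontrivial point, and ``a short deformation across the $15$-dimensional family'' does not produce it. A workable version of your idea is to impose on $C\in |-2K_S|$ the codimension-two condition that two members $\ell_1,\ell_2$ of a single conic pencil on the \emph{smooth} del Pezzo $S$ be tangent to $C$ at two points each, so that $\ell_1\cdot C=2a+2b$ and $\ell_2\cdot C=2p+2q$ give $2a+2b\sim 2p+2q$ with $a+b\not\sim p+q$, and then to verify that this $13$-dimensional family is not contained in the (expected codimension $\geq 4$) locus where one of the five conic pencils becomes Petri-special, nor in the trigonal or theta-null loci. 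Until such a construction or a transversality argument is actually carried out, the key claim remains unproved.
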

\begin{proof} Note that $\calQ_4'$ is not contained in $\calU$, then use Proposition \ref{rk4sing}.
\end{proof}
\begin{prop}\label{q4par}
The locus $\calQ_4$ dominates via the Prym map the locus $H_1$, that is, $\overline{P(\calQ_4)}\supset H_1$.
\end{prop}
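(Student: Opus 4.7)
\emph{Plan for the proof of Proposition \ref{q4par}.} The strategy is to take a general point $(A,\Theta) \in H_1$ and produce a Prym preimage lying in $\calQ_4$. By the definition of $H_1$ as the union of the components of $H$ not equal to $\theta_{\mathrm{null}}^4$, the theta divisor $\Theta$ carries a double point $L$ whose tangent cone $Q_L$ has rank at most $4$, and $L$ is not a two-torsion point. Picking any Prym representation $(A,\Theta) = P(C,\eta)$ with $(C,\eta)\in\calR_6$, a Riemann--Kempf type multiplicity formula for Prym theta divisors shows that at a generic point of $H_1$ the point $L$ must be a quadratic stable singularity of $\Xi(C,\eta)$, i.e.\ $L \in V_3(C,\eta)$ with $h^0(\tilde C, L) = 4$ and $Q_L = \mathrm{Pf}(L)$.

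Next I would invoke Proposition \ref{rk4sing}: since $\rk(Q_L) \leq 4$, the singularity $L$ is simultaneously stable and exceptional, so $L = f^*(M) \otimes \calO_{\tilde C}(B)$ for some $M \in \Pic(C)$ with $h^0(C,M) \geq 2$ and $B \geq 0$ effective on $\tilde C$. The degree identity $2\deg M + \deg B = 10$ forces $\deg M \in \{1,2,3,4,5\}$, and I would eliminate the extraneous values as follows. If $\deg M \leq 2$ then $C$ is hyperelliptic, imposing codimension at least three in $\calM_6$; if $\deg M = 3$ then $(C,\eta) \in \calQ_3 = \pi^{-1}(\calM_{6,3}^1)$, and by the trigonal construction recalled in Section 9 one has $P(\calQ_3) = \calJ_5$, which has dimension $12 < 13 = \dim H_1$; if $\deg M = 5$ then $B = 0$, $L = f^*M$, and the identity $h^0(\tilde C, L) = h^0(M) + h^0(M\otimes \eta) = 4$ combined with $\mathrm{Nm}(L) = K_C$ forces $M^{\otimes 2} = (M\otimes\eta)^{\otimes 2} = K_C$, so both $M$ and $M\otimes\eta$ are vanishing theta characteristics with $h^0 \geq 2$; thus $(C,\eta)\in\calQ_5$ and $P(C,\eta)\in\theta_{\mathrm{null}}^4$, contradicting the assumption $(A,\Theta)\notin\theta_{\mathrm{null}}^4$.

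The remaining possibility is $\deg M = 4$, $\deg B = 2$. Then $M \in W^1_4(C)$, and a Riemann--Roch/Serre duality computation gives $h^0(K_C\otimes\eta\otimes M^{\vee}) = h^0(M\otimes\eta) + 1$; the existence of the exceptional decomposition $L = f^*M \otimes \calO(B)$ with $h^0(\tilde C, L) = 4$ forces $h^0(M\otimes\eta) \geq 1$, so $L_2 := K_C\otimes\eta\otimes M^{\vee} \in W^1_6(C)$ and we obtain the decomposition $K_C\otimes\eta = M\otimes L_2$ characterising $\calQ_4$. Very ampleness of $K_C\otimes\eta$ is automatic: otherwise $(C,\eta) \in \calQ_4'$, and by Proposition \ref{notveryample} the image $P(\calQ_4')$ does not meet $H$ generically, contradicting $(A,\Theta) \in H_1$. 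Thus $(C,\eta) \in \calQ_4$ and $(A,\Theta) \in \overline{P(\calQ_4)}$.

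The main obstacle will be the very first step: justifying rigorously that for a generic $(A,\Theta) \in H_1$ the non-ordinary double point corresponds, under at least one Prym realisation, to a \emph{quadratic stable} singularity of $\Xi(C,\eta)$ with $h^0(\tilde C, L) = 4$, rather than to a purely exceptional singularity with $h^0 = 2$. This requires excluding the purely exceptional case on dimensional grounds, using the Prym Riemann--Kempf theorem together with the matching of dimensions $\dim\calQ_4 = 13 = \dim H_1$ established earlier.
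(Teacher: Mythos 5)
Your architecture after the first step matches the paper's: once the singularity is known to be a \emph{quadratic stable} singularity $L\in V_3(C,\eta)$ with $\rk(Q_L)\leq 4$, Proposition \ref{rk4sing} places $L$ in $\Sing^{\mathrm{st}}_f(\Xi)\cap\Sing^{\mathrm{ex}}_f(\Xi)$, the Prym-canonical bundle splits as a product of two pencils, and your case analysis on $\deg M$ together with Proposition \ref{notveryample} lands you in $\calQ_4$. Indeed your elimination of $\deg M\in\{1,2,3,5\}$ is a useful expansion of the paper's terser statement that ``$L$ is not a theta characteristic and $P(C,\eta)\notin\calJ_5$'', and the genericity assumption (avoiding loci of codimension $\geq 3$) handles the hyperelliptic and plane quintic strata exactly as you indicate.

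The gap is precisely where you locate it, but the strategy you sketch for closing it is not the one that works. You propose to rule out the purely exceptional case $h^0(\tilde C,L)=2$ ``on dimensional grounds'' via a Riemann--Kempf type multiplicity formula. The difficulty is that when $h^0(\tilde C,L)=2$ and $L\in\Sing(\Xi)$, one has $\mathrm{Pf}(L)\equiv 0$ and the tangent cone is governed by higher-order data, so there is no a priori dictionary between the Hessian condition $\rk H(x_0)\leq 4$ defining $H$ and the rank of $\mu^-_L$; consequently there is no evident dimension count excluding that stratum from dominating a component of $H_1$. The paper closes the gap by a degeneration and semicontinuity argument instead: starting from $x_0=(\tau_0,z_0)\in\calS'$ general in a component of $H-\tn$, one chooses a one-parameter family $\{x_t\}_{t\in T}\subset\calS'$ with $\Sing(\Theta_t)=\{\pm z_t\}$ for $t\neq 0$; since $P(\calQ)$ is dense in $N_0'$ and $\calQ=\{(C,\eta): V_3(C,\eta)\neq\emptyset\}$ by the earlier identification of the ramification divisor with the Prym--Brill--Noether divisor, the generic singularity $z_t$ lifts to $L_t\in V_3(C_t,\eta_t)$ with $h^0(\tilde C_t,L_t)=4$, and semicontinuity then forces $h^0(\tilde C,L_0)\geq 4$ at $t=0$. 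This is the missing step, and it leans on the global characterization of $\calQ$ rather than on any multiplicity formula at the special point.
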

\begin{proof}
We start with a point $x_0=(\tau_0, z_0)\in \mathcal{S}'$, corresponding to a singular point $z_0\in \Theta_{\tau_0}$ such that $\rk H(x_0)\leq 4$ and $x_0$ is a general point of a component of $H-\theta_{\mathrm{null}}$. In particular $(A_{\tau_0}, \Theta_{\tau_0})$ can be chosen outside any  subvariety of $\calA_5$ having codimension at least $3$. Since each component of $\mathcal{S}'$ maps generically finite onto $N_0'$, we find a deformation $\left\{x_t=(\tau_t, z_t)\right\}_{t\in T}\subset \mathcal{S}'$, parameterized by an integral curve $T\ni 0$, such that for all $t\in T-\{0\}$, the corresponding theta divisor $\Theta_t$ has only a pair of singular points, that is, $\Sing(\Theta_t)=\{\pm z_t\}$. Since $P(\calQ)$ is dense in $N_0'$, after possibly shrinking $T$, we can find a family of triples $\{(C_t, \eta_t, L_t)\}_{t\in T}$, such that $(C_t, \eta_t)\in \calQ$
for all $t\in T$, while for $t\neq 0$ the line bundle $L_t\in V_3(C_t, \eta_t)$ corresponds to the singularity $z_t\in \Sing(\Xi_t)$. If we set $(C, L, \eta):=(C_0, L_0, \eta_0)$, by semicontinuity we obtain that $h^0(C, L)\geq 4$.  Since $\rk H(L)=\rk Q_{L}\leq 4$, it follows from Proposition \ref{rk4sing} that $L\in \Sing^{\mathrm{st}}_{(C, \eta)}(\Xi)\cap \Sing^{\mathrm{ex}}_{(C, \eta)}(\Xi)$, which implies that the Prym-canonical line bundle can be expressed as a sum of two pencils. Since $L$ is not a theta characteristic and $P(C, \eta)\notin \calJ_5$, we obtain that  the Prym-canonical bundle can be expressed as $K_C\otimes \eta=A\otimes A'$, where $A\in W^1_4(C)$. From Proposition \ref{notveryample} it follows that $K_C\otimes \eta$ can be assumed to be very ample, that is, $(C, \eta)\in \calQ_4$.
\end{proof}

\begin{cor}
$P(\calQ_4)$ is a unirational component of $H$, different from $\theta_{\mathrm{null}}^4$.
\end{cor}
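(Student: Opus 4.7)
My plan is to combine Proposition~\ref{q4par} with a simple dimension-and-class comparison. First, I would record that $\calQ_4$ is irreducible: the previous proposition exhibits $\calQ_4$ as the dominant image of the quotient $\mathcal{P}\dblq\op{Aut}(S')$ of an irreducible rational variety, so $\calQ_4$ is unirational of dimension $13$, and in particular irreducible. Since $P:\calR_6\rightarrow \calA_5$ is dominant of degree $27$ and therefore generically finite, $\overline{P(\calQ_4)}$ is an irreducible subvariety of $\calA_5$ of dimension at most $13$, and it is unirational as the image of a unirational variety under a rational map.

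Next, I would invoke Proposition~\ref{q4par}, which gives the inclusion $H_1\subseteq \overline{P(\calQ_4)}$. Since $H$ is a codimension-$2$ cycle on $\calA_5$, each irreducible component of $H_1$ has dimension $13$ and sits inside the irreducible $13$-dimensional variety $\overline{P(\calQ_4)}$, and hence must coincide with it. This forces $H_1$ to be irreducible and equal to $\overline{P(\calQ_4)}$, identifying $\overline{P(\calQ_4)}$ as an irreducible component of $H$.

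Finally, to separate this component from $\theta_{\mathrm{null}}^4$, I would appeal to the class computations already at hand: Theorem~\ref{q5} gives $[\theta_{\mathrm{null}}^4]=27\cdot 44\,\lambda_1^2$, while $[H_1]=27\cdot 49\,\lambda_1^2\in CH^2(\calA_5)$ as recorded immediately before Theorem~\ref{thm:genus5}. These classes are numerically distinct, so $\overline{P(\calQ_4)}=H_1\neq \theta_{\mathrm{null}}^4$. The only substantive input to the argument is Proposition~\ref{q4par}; the rest is bookkeeping, and the sole delicate point --- the irreducibility of $\calQ_4$ --- is handled automatically by the unirational parametrization constructed in the preceding proposition.
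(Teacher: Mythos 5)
Your argument is correct and is essentially the one the paper intends: the corollary is stated as an immediate consequence of the unirationality and dimension count for $\calQ_4$, the inclusion $H_1\subseteq\overline{P(\calQ_4)}$ from Proposition~\ref{q4par}, and the class comparison $[H_1]\neq[\theta_{\mathrm{null}}^4]$. The only cosmetic quibble is that the bound $\dim\overline{P(\calQ_4)}\leq 13$ follows simply from $\dim\calQ_4=13$, without needing generic finiteness of $P$.
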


\subsection{A parametrization of $\theta_{\mathrm{null}}^4$.}
Our aim is to find an explicit unirational parametrization of $\theta_{\mathrm{null}}^4$.
\begin{prop}\label{q5par}
 $\overline{P(\calQ_5)}=\theta_{\mathrm{null}}^4$, where the the closure is taken inside $\calA_5$.
\end{prop}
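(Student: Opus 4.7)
The plan is to establish the two inclusions $\overline{P(\calQ_5)} \subseteq \theta_{\mathrm{null}}^4$ and $\theta_{\mathrm{null}}^4 \subseteq \overline{P(\calQ_5)}$ separately.

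For the first inclusion, I would start from a general $(C,\eta)\in \calQ_5$ with $\eta=\theta_1\otimes\theta_2^{\vee}$ for vanishing theta characteristics $\theta_1,\theta_2\in W^1_5(C)$, and consider $L:=f^*(\theta_1)$ on $\tilde C$. Since $f^*\eta=\calO_{\tilde C}$ one also has $L=f^*(\theta_2)$, and $L^{\otimes 2}=f^*(K_C)=K_{\tilde C}$, so $L$ is a theta characteristic of $\tilde C$ with $\mathrm{Nm}_f(L)=K_C$. Push-pull gives
$$h^0(\tilde C, L)=h^0(C,\theta_1)+h^0(C,\theta_2)\geq 4,$$
so $L$ belongs to the even component $\mathrm{Nm}_f^{-1}(K_C)^+$ and in fact to $V_3(C,\eta)=\Sing^{\mathrm{st}}_f(\Xi)$. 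Since moreover $L=f^*(\theta_1)$ with $h^0(C,\theta_1)\geq 2$, also $L\in \Sing^{\mathrm{ex}}_f(\Xi)$. Proposition \ref{rk4sing} then forces the Pfaffian quadric $Q_L$ to have rank at most $4$, and because $L$ is a theta characteristic of $\tilde C$ it descends to a $2$-torsion singular point of the Prym theta divisor. The rank-at-most-$4$ condition reads exactly as the tangent cone being non-ordinary, so $P(C,\eta)\in\theta_{\mathrm{null}}^4$.

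For the reverse inclusion, I would combine a dimension count with a converse Prym description. By the class $[\theta_{\mathrm{null}}^4]=27\cdot 44\,\lambda_1^2$ recorded in Theorem \ref{q5}, the locus $\theta_{\mathrm{null}}^4\subset \calA_5$ is pure of codimension $2$, so every irreducible component has dimension $13$. On the other hand, the announced parametrization of $\calQ_5$ via nodal curves $C\in |\calI^2_{R_1\cdot R_2/\PP^1\times\PP^1}(5,5)|$ with $R_1\in|\calO(3,1)|$, $R_2\in|\calO(1,3)|$ --- the two $\mathfrak g^1_5$'s coming from the rulings being the theta-nulls $\theta_1,\theta_2$ --- exhibits $\calQ_5$ as unirational and irreducible of dimension $7+7+5-6=13$. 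Since $P:\calR_6\rightarrow\calA_5$ is dominant of degree $27$ and $\dim\theta_{\mathrm{null}}^4=\dim\calQ_5$, the restriction $P|_{\calQ_5}$ is generically finite, so $\overline{P(\calQ_5)}$ is an irreducible $13$-dimensional subvariety of $\theta_{\mathrm{null}}^4$ and hence equals at least one of its irreducible components.

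The main obstacle is then to rule out the existence of other components, which I would handle by running the above construction in reverse. Pick a general $(A,\Theta)\in \theta_{\mathrm{null}}^4$ and, using surjectivity of $P$, a Prym realization $(A,\Theta)=P(C,\eta)$; let $L\in\Sing(\Xi)$ be the stable singularity realizing the prescribed non-ordinary $2$-torsion double point. Then $L^{\otimes 2}=K_{\tilde C}$, so $L=f^*(M)$ with $M^{\otimes 2}=K_C$. By Proposition \ref{rk4sing}, $L\in\Sing^{\mathrm{ex}}_f(\Xi)$, hence $L=f^*(M')\otimes\calO_{\tilde C}(B)$ for some $M'$ with $h^0(C,M')\geq 2$ and effective $B\geq 0$. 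Using $\ker f^*=\{\calO_C,\eta\}$ and ruling out $B\ne 0$ at the generic point by base-point considerations, one is forced into $\{M,\,M\otimes\eta\}=\{\theta_1,\theta_2\}$ with both $\theta_i\in W^1_5(C)$ vanishing theta characteristics and $\eta=\theta_1\otimes\theta_2^{\vee}$, placing $(C,\eta)\in\calQ_5$ and completing the equality $\overline{P(\calQ_5)}=\theta_{\mathrm{null}}^4$.
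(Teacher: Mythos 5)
Your overall strategy matches the paper's: the inclusion $\overline{P(\calQ_5)}\subseteq\theta_{\mathrm{null}}^4$ is the easy direction (the paper dismisses it as obvious, and your route via $L=f^*(\theta_1)\in V_3(C,\eta)\cap \Sing^{\mathrm{ex}}_f(\Xi)$ and Proposition \ref{rk4sing} is the intended one), while the reverse inclusion is obtained by reverse-engineering a Prym realization at a general point of each component. The intermediate dimension count via the $(R_1,R_2,\Gamma)$-parametrization is correct in spirit but redundant once the pointwise argument works (and quoting the class from Theorem \ref{q5} is slightly circular, since that theorem is a repackaging of the present proposition; purity of $\theta_{\mathrm{null}}^4$ really follows from its being cut out by two equations inside the irreducible divisor $\theta_{\mathrm{null}}$, not from the class).

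The genuine gap is in the sentence ``let $L\in\Sing(\Xi)$ be the \emph{stable} singularity realizing the prescribed non-ordinary $2$-torsion double point.'' You are assuming exactly the nontrivial point, namely that the singularity of $\Xi$ at the two-torsion point satisfies $h^0(\tilde{C},L)\geq 4$. A priori the realization could be purely exceptional: if $C$ carries a single vanishing theta-null $\theta_1$ with $h^0(C,\theta_1\otimes\eta)=0$, then $L=f^*(\theta_1)$ has $h^0(\tilde{C},L)=2$, the Pfaffian $\mu_L^-(s_1\wedge s_2)$ vanishes identically, and $L$ is still a two-torsion singular point of $\Xi$ --- this happens for \emph{every} $\eta$ once $[C]\in\mathcal{GP}^1_{6,5}$, yet such $(C,\eta)$ generically do not lie in $\calQ_5$. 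Without excluding this scenario, your conclusion $h^0(C,M)+h^0(C,M\otimes\eta)\geq 4$, hence the existence of the second vanishing theta-null, does not follow. The paper closes this gap by invoking the set-theoretic identity $\phi_*(\calS_{\mathrm{null}}\cap\calS')=\theta_{\mathrm{null}}^4$ from Section 2 and the deformation/semicontinuity argument of Proposition \ref{q4par}: since $P(\calQ)$ is dense in $N_0'$ and $\calQ=\{V_3(C,\eta)\neq\emptyset\}$, a general point of a component of $\theta_{\mathrm{null}}^4$ is a limit of Prym curves $(C_t,\eta_t)$ with singularities $L_t\in V_3(C_t,\eta_t)$, whence $h^0(\tilde{C},L)\geq 4$ at the limit. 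Some version of this step must be supplied. (A smaller omission: to force both $h^0(C,M)=2$ and $h^0(C,M\otimes\eta)=2$ you must also rule out $h^0(C,M)\geq 3$, which holds at a general point of a $13$-dimensional component because theta characteristics with three or more sections impose codimension at least $3$ on moduli.)
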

\begin{proof} This proof resembles that of Proposition \ref{q4par}. If $\phi:\calX_5\rightarrow \calA_5$ denotes the universal abelian variety, recall that we have showed that $\phi_*(\calS_{\mathrm{null}}\cap \calS')=\theta_{\mathrm{null}}^4$. Thus a point $(\tau, z)\in \calS_{\mathrm{null}}\cap \calS'$ corresponding to a general point $(A_{\tau}, \Theta_{\tau})$ of a component of $\theta_{\mathrm{null}}^4$ is a Prym variety $P(C, \eta)$, where $(C, \eta)\in \calQ\cap \calU$ is a Prym curve such that $z\in \Sing(\Theta_{\tau})$ corresponds to a singularity $L\in \Sing^{\mathrm{st}}_{(C, \eta)}(\Xi) \cap \Sing_{(C, \eta)}^{\mathrm{ex}}(\Xi)$. Then $L=f^*(\theta_1)$, where $\theta_1\in \Pic^5(C)$ is a vanishing theta-null. Since $h^0(\tilde{C}, L)=h^0(C, \theta_1)+h^0(C, \theta_1\otimes \eta)\geq 4$, we find that $\theta_2:=\theta_1\otimes \eta$ is another theta characteristic, that is, $(C, \eta)\in \calQ_5$. Therefore $\theta_{\mathrm{null}}^4\subseteq \overline{P(\calQ_5)}$. The reverse inclusion being obvious, we finish the proof.
\end{proof}

We can now complete the proof of Theorem \ref{q5}. We consider the smooth quadric $Q:=\PP^1\times \PP^1$ and the linear systems of rational curves
$$\PP^7_{1}:=\left|\calO_{\PP^1\times \PP^1}(3, 1)\right| \ \mbox{ and  } \PP^7_2:=\left|\calO_{\PP^1\times \PP^1}(1, 3)\right|.$$ Over $\PP^7_1\times \PP^7_2$ we define the $\PP^5$-bundle
$$\calU:=\left\{(R_1, R_2, \Gamma): R_i\in \PP^7_i \mbox{ for } i=1, 2, \ \ \Gamma\in |\mathcal{I}_{R_1\cdot R_2/Q}^2(5, 5)|\right\}.$$
There is an induced rational map $\psi:\calU \dashrightarrow \calQ_5$ given by
$$\psi(R_1, R_2, \Gamma):=\left(C, p_1^*\calO(1)\otimes p_2^*\calO(-1)\right)\in \calR_6,$$ where
$\nu:C\rightarrow \Gamma$ is the normalization map and $p_1, p_2:C\rightarrow \PP^1$ are the composition of $\nu$ with the two projections.

A general pair $(R_1, R_2)\in \PP^7_1\times \PP^7_2$ corresponds to smooth rational curves such that the intersection cycle
$R_1\cdot R_2=o_1+\cdots+o_{10}$ consists of distinct points. For any curve $\Gamma \in |\mathcal{I}^2_{R_1\cdot R_2}(5, 5)|$ we have
$R_1\cdot \Gamma=R_2\cdot \Gamma=2(o_1+\cdots+o_{10})$. Since $\nu^*:|\calI_{R_1\cdot R_2}(3, 3)|\rightarrow |K_C|$ is an isomorphism, it follows that both $p_1^*\calO(1)$ and $p_2^*\calO(1)$ are vanishing theta-nulls, hence $\psi(\calU)\subset \calQ_5$.

\begin{thm}\label{q5param}
The rational map $\psi:\calU \dashrightarrow \calQ_5$ is generically finite and dominant. In particular $\calQ_5$ (and thus $\tn^4=\overline{P(\calQ_5)}$) is unirational.
\end{thm}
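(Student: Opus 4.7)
The plan is to show that $\psi$ is dominant onto $\calQ_5$; since $\calU$ is rational (a projective bundle over $\PP^7_1\times\PP^7_2$), dominance at once gives the unirationality of $\calQ_5$, and hence of $\tn^4=\overline{P(\calQ_5)}$ by Proposition \ref{q5par}. The generic finiteness assertion has to be interpreted modulo the natural $6$-dimensional action of $\op{Aut}(Q)=\op{PGL}_2\times\op{PGL}_2\rtimes\mathbb{Z}/2$ on $\calU$, which clearly preserves $\psi$; this is consistent with the balance $\dim\calU-6=19-6=13=\dim\calQ_5$.

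I first verify $\psi(\calU)\subset\calQ_5$. For a generic triple $(R_1,R_2,\Gamma)\in\calU$, the curve $\Gamma$ is integral with exactly $10$ ordinary nodes at $Z=R_1\cap R_2$, and the normalization $\nu:C\to\Gamma$ has genus $6$. Writing $B:=\sum_{i=1}^{10}(x_i+y_i)$ for the branch divisor of $\nu$, the adjunction formula $\nu^*\omega_\Gamma=K_C(B)$ combined with $\omega_\Gamma=\calO_\Gamma(3,3)$ and $\nu^*\calO_\Gamma(R_j)=\calO_C(B)$ (which follows from $R_j\cdot\Gamma=2\sum o_i$) yields $K_C=\nu^*\calO_\Gamma(2,0)=\theta_1^{\otimes 2}$ and symmetrically $K_C=\theta_2^{\otimes 2}$, where $\theta_i:=p_i^*\calO(1)|_C$. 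Thus $\theta_1,\theta_2\in\op{Pic}^5(C)$ are theta characteristics, and pulling back sections of $\calO(1)$ from $\PP^1$ shows $h^0(\theta_i)\ge 2$, making both vanishing theta-nulls. The class $\eta=\theta_1\otimes\theta_2^{-1}$ is a non-trivial $2$-torsion element for generic triples, so $(C,\eta)\in\calQ_5$.

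For dominance I reverse the construction. Given a general $(C,\eta)\in\calQ_5$ with vanishing theta-nulls $\theta_1,\theta_2$, I use $|\theta_i|$ to define $p_i:C\to\PP^1$ and set $\Gamma:=(p_1,p_2)(C)\subset Q$, an integral nodal $(5,5)$-curve with $10$ ordinary nodes. The heart of the argument---and the main obstacle---is producing $R_1\in|\calO_Q(3,1)|$ through all $10$ nodes (and symmetrically $R_2$), since a naive count gives $\dim|\calO_Q(3,1)|=7$ against $10$ linear conditions, so the existence must be forced by the Prym structure. The argument is cohomological: one has $\calO_C(B)=3\theta_1+\theta_2=\nu^*\calO_\Gamma(3,1)$, and the tautological section $1_B\in H^0(C,\calO_C(B))$ vanishes at each $x_i$ and $y_i$, hence trivially satisfies the gluing conditions at the $10$ nodes of $\Gamma$ and descends to a section $s_1\in H^0(\Gamma,\calO_\Gamma(3,1))$. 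The short exact sequence $0\to\calO_Q(-2,-4)\to\calO_Q(3,1)\to\calO_\Gamma(3,1)\to 0$ together with the K\"unneth vanishings $H^0(\calO_Q(-2,-4))=H^1(\calO_Q(-2,-4))=0$ identifies $H^0(Q,\calO(3,1))\cong H^0(\Gamma,\calO_\Gamma(3,1))$, so $s_1$ lifts uniquely to a curve $R_1\in|\calO_Q(3,1)|$ whose intersection cycle with $\Gamma$ is precisely $2\sum o_i$. Producing $R_2\in|\calO_Q(1,3)|$ symmetrically gives $(R_1,R_2,\Gamma)\in\calU$ with $\psi(R_1,R_2,\Gamma)=(C,\eta)$; generic finiteness modulo $\op{Aut}(Q)$ then follows from the fact that for generic $(C,\eta)\in\calQ_5$ the pair $\{\theta_1,\theta_2\}$ is uniquely determined.
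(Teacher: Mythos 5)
Your proof is correct, and its overall architecture coincides with the paper's: check $\psi(\calU)\subset\calQ_5$, then reverse the construction from a general $(C,\eta)\in\calQ_5$ by mapping $C$ to $Q=\PP^1\times\PP^1$ via the pair of theta-nulls and producing curves $R_1,R_2$ with $R_i\cdot\Gamma=2\sum_{i=1}^{10}o_i$. The one step you handle by a genuinely different mechanism is the existence of $R_1$ (and symmetrically $R_2$): the paper chooses ruling divisors $D,D'\in|\theta_1|$, invokes the standard fact that the adjoint system $|\calI_{o_1+\cdots+o_{10}}(3,3)|$ cuts out $|K_C|$ to get a cubic $E$ with $E\cdot\Gamma=D+D'+2\sum o_i$, and then uses B\'ezout to split off $E=\ell+\ell'+R_1$; you instead note that $\calO_C(B)\cong\nu^*\calO_\Gamma(3,1)$ by adjunction together with $\theta_1^{\otimes 2}\cong\theta_2^{\otimes 2}\cong K_C$, descend the section $1_B$ to $\Gamma$ (trivial gluing, since it vanishes at both preimages of each node), and lift it to $Q$ using $H^0(\calO_Q(-2,-4))=H^1(\calO_Q(-2,-4))=0$. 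Both routes are standard; yours avoids the auxiliary choice of $D,D'$ and gives the uniqueness of $R_1$ for free, since the restriction $H^0(Q,\calO_Q(3,1))\to H^0(\Gamma,\calO_\Gamma(3,1))$ is an isomorphism. You are also more careful about the finiteness claim: since $\dim\calU=19>13=\dim\calQ_5$, the map $\psi$ is generically finite only modulo the $6$-dimensional group $\Aut(\PP^1\times\PP^1)$, and your explicit quotient, combined with the uniqueness of the pair $\{\theta_1,\theta_2\}$ for a general point of $\calQ_5$, is the correct reading of the statement; the paper's proof addresses only dominance, which is indeed all that is needed for the unirationality conclusion.
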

\begin{proof}
We start with a point $(C, \theta_1, \theta_2)\in \calQ_5$ moving in a $13$-dimensional family. In particular, the image $\Gamma$ of the induced map $\varphi_{(\theta_1, \theta_2)}:C\rightarrow \PP^1\times \PP^1$ is nodal and we set $\Sing(\Gamma)=\{o_1, \ldots, o_{10}\}$.

We choose divisors $D, D'\in |\theta_1|$, corresponding to lines $\ell, \ell'\in |\calO_{Q}(1, 0)|$ such that $\nu^*(\Gamma\cdot \ell)=D$ and $\nu^*(\Gamma\cdot \ell')=D'$ respectively. Then $D+D'\in |K_C|$, and since the linear system $|\calI_{o_1+\cdots+o_{10}}(3, 3)|$ cuts out the canonical system on $C$, it follows that there exists a cubic curve $E\in |\calO_Q(3, 3)|$ such that
$$E\cdot \Gamma=D+D'+2\sum_{i=1}^{10} o_i.$$
By B\'ezout's Theorem, both $\ell$ and $\ell'$ must be components of $E$, that is, we can write $E=\ell+\ell'+R_1$, where
$R_1\in |\calO_{Q}(1, 3)|$ is such that $R_1\cdot \Gamma=2\sum_{i=1}^{10} o_i$. Switching the roles of $\theta_1$ and $\theta_2$, there exists
$R_2\in |\calO_{Q}(3, 1)|$ such that $R_2\cdot \Gamma=2\sum_{i=1}^{10} o_i$. It follows that $(R_1, R_2, \Gamma)\in \psi^{-1}\left((C, \theta_1\otimes \theta_2^{\vee})\right)$. The variety $\calU$ being a $\PP^5$-bundle over $\PP^7_1\times \PP^7_2$ is unirational, hence $\calQ_5$ is unirational as well, thus finishing the proof.
\end{proof}

\vskip 3pt
\noindent \small{{\bf{Acknowledgments:}} Research of the first author is supported by the Sonderforschungsbereich 647 ``Raum-Zeit-Materie" of the DFG.
The first author is grateful to INdAM and Universit\`a Roma Tre for providing financial support and a stimulating environment during a stay in Rome in which parts of this paper have been completed. Research of the second author was supported in part by National Science Foundation under the grant DMS-10-53313. Research of the third author is supported in part by Universit\`a ``La Sapienza'' under the grant C26A10NABK. Research of the fourth author is supported in part by the research project ``Geometry of Algebraic Varieties" of the Italian frame program PRIN-2008.}


\begin{thebibliography}{Vor1908}

\bibitem[ACGH85]{ACGH}
E.~Arbarello, M.~Cornalba, P.~A.~Griffiths, and J.~Harris.
\newblock {\em Geometry of algebraic curves. {V}ol. {I}}, volume 267 of {\em
  Grundlehren der Mathematischen Wissenschaften}.
\newblock Springer-Verlag, New York, 1985.

\bibitem[ADC84]{ardcequations}
E.~Arbarello and C.~De~Concini.
\newblock On a set of equations characterizing {R}iemann matrices.
\newblock {\em Ann. of Math.}, 120(1):119--140, 1984.

\bibitem[AM67]{anma}
A.~Andreotti and A.~L. Mayer.
\newblock On period relations for abelian integrals on algebraic curves.
\newblock {\em Ann. Scuola Norm. Sup. Pisa (3)}, 21:189--238, 1967.

\bibitem[Bea77]{beauville}
A.~Beauville.
\newblock Prym varieties and the {S}chottky problem.
\newblock {\em Invent. Math.}, 41(2):149--196, 1977.

\bibitem[CvdG00]{civdg1}
C.~Ciliberto and G.~van~der Geer.
\newblock The moduli space of abelian varieties and the singularities of the
  theta divisor.
\newblock {\em Surv. Differential Geom.}, VII,
  pages 61--81, Int. Press, Somerville, MA, 2000.

\bibitem[CvdG08]{civdg2}
C.~Ciliberto and G.~van~der Geer.
\newblock Andreotti-{M}ayer loci and the {S}chottky problem.
\newblock {\em Doc. Math.}, 13:453--504, 2008.

\bibitem[Deb92]{debarredecomposes}
O.~Debarre.
\newblock Le lieu des vari\'et\'es ab\'eliennes dont le diviseur th\^eta est
  singulier a deux composantes.
\newblock {\em Ann. Sci. \'Ecole Norm. Sup. (4)}, 25(6):687--707, 1992.

\bibitem[Don92]{dofibers}
R.~Donagi.
\newblock The fibers of the {P}rym map.
\newblock In {\em Curves, {J}acobians, and abelian varieties ({A}mherst, {MA},
  1990)}, volume 136 of {\em Contemp. Math.}, pages 55--125. Amer. Math. Soc.,
  Providence, RI, 1992.

\bibitem[DS81]{dosm}
R.~Donagi and R.~Smith.
\newblock The structure of the {P}rym map.
\newblock {\em Acta Math.}, 146(1-2):25--102, 1981.

\bibitem[EH87]{EH87}
D.~Eisenbud and J.~Harris.
\newblock The {K}odaira dimension  of the
moduli space of curves of genus $\geq 23$.
\newblock  {\em Invent. Math.}, 90:359--387, 1987.

\bibitem[EL97]{eila1}
L.~Ein and R.~Lazarsfeld.
\newblock Singularities of theta divisors and the birational geometry of
  irregular varieties.
\newblock {\em J. Amer. Math. Soc.}, 10(1):243--258, 1997.

\bibitem[Far05]{fapetri}
G.~Farkas.
\newblock Gaussian maps, {G}ieseker-{P}etri loci and large
  theta-characteristics.
\newblock {\em J. Reine Angew. Math.}, 581:151--173, 2005.

\bibitem[FL10]{falu}
G.~Farkas and K.~Ludwig.
\newblock The {K}odaira dimension of the moduli space of {P}rym varieties.
\newblock {\em J. European Math. Soc. (JEMS)}, 12(3):755--795, 2010.

\bibitem[FV11]{faveni}
G.~Farkas and A.~Verra.
\newblock Moduli of theta-characteristics via {N}ikulin surfaces.
\newblock {\em Math. Annalen}, 354:465-496, 2012.

\bibitem[HF06]{farkashvanishing}
H.~Farkas.
\newblock Vanishing thetanulls and {J}acobians.
\newblock In {\em The geometry of {R}iemann surfaces and abelian varieties},
  volume 397 of {\em Contemp. Math.}, pages 37--53. Amer. Math. Soc.,
  Providence, RI, 2006.

\bibitem[Fre83]{freitagbooksiegel}
E.~Freitag.
\newblock {\em Siegelsche {M}odulfunktionen}, volume 254 of {\em Grundlehren
  der Mathematischen Wissenschaften}.
\newblock Springer-Verlag, Berlin, 1983.

\bibitem[vdG99]{vdgeercycles}
G.~van~der Geer.
\newblock {\em Cycles on the moduli space of abelian varieties}, pages 65--89.
\newblock Aspects Math., E33. Vieweg, Braunschweig, 1999.

\bibitem[GSM07]{grsmordertwo}
S.~Grushevsky and R.~Salvati~Manni.
\newblock Singularities of the theta divisor at points of order two.
\newblock {\em Int. Math. Res. Not. IMRN}, (15):Art. ID rnm045, 15, 2007.

\bibitem[GSM08]{grsmgen4}
S.~Grushevsky and R.~Salvati~Manni.
\newblock Jacobians with a vanishing theta-null in genus 4.
\newblock {\em Israel J. Math.}, 164:303--315, 2008.

\bibitem[GSM11]{grsmprym}
S.~Grushevsky and R.~Salvati~Manni.
\newblock The Prym map on divisors, and the slope of {${\mathcal A}_5$} (with
  an appendix by {K}.~{H}ulek).
\newblock 2011.
\newblock preprint arXiv:1107.3094.

\bibitem[Igu72]{igusabook}
J.-I.~Igusa.
\newblock {\em Theta functions}, volume 194 of {\em Grundlehren
  der Mathematischen Wissenschaften}.
\newblock Springer-Verlag, New York, 1972.

\bibitem[Igu81a]{igusagen4}
J.-I.~Igusa.
\newblock On the irreducibility of {S}chottky's divisor.
\newblock {\em J. Fac. Sci. Univ. Tokyo Sect. IA Math.}, 28(3):531--545 (1982),
  1981.

\bibitem[Igu81b]{igusachristoffel}
J.-I.~Igusa.
\newblock Schottky's invariant and quadratic forms.
\newblock In {\em E. {B}. {C}hristoffel ({A}achen/{M}onschau, 1979)}, pages
  352--362. Birkh\"auser, Basel, 1981.

\bibitem[dJ10]{rdejong}
R.~de~Jong.
\newblock Theta functions on the theta divisor.
\newblock {\em Rocky Mountain J. Math.}, 40(1):155--176, 2010.

\bibitem[KSM02]{krsm}
J.~Kramer and R.~Salvati~Manni.
\newblock An integral characterizing the {A}ndreotti-{M}ayer locus.
\newblock {\em Abh. Math. Sem. Univ. Hamburg}, 72:47--57, 2002.

\bibitem[Mum74]{Mumfordprym}
D.~Mumford.
\newblock Prym varieties. {I}.
\newblock In {\em Contributions to analysis (a collection of papers dedicated
  to {L}ipman {B}ers)}, pages 325--350. Academic Press, New York, 1974.

\bibitem[Mum83]{mumforddimag}
D.~Mumford.
\newblock On the {K}odaira dimension of the {S}iegel modular variety.
\newblock In {\em Algebraic geometry---open problems ({R}avello, 1982)}, volume
  997 of {\em Lecture Notes in Math.}, pages 348--375, Berlin, 1983. Springer.

\bibitem[MV11]{mavo}
M.~Matone and R.~Volpato.
\newblock Vector-valued modular forms from the {M}umford form, {S}chottky-{I}gusa
  form, product of thetanullwerte and the amazing {K}lein formula.
\newblock 2011.
\newblock preprint arXiv:1102.0006.

\bibitem[SM92]{smmodfour}
R.~Salvati~Manni.
\newblock Modular forms of the fourth degree. {R}emark on the paper:
  ``{S}lopes of effective divisors on the moduli space of stable curves''
  [{I}nvent.\ {M}ath.\ {\bf 99} (1990), 321--355] by {J}. {H}arris and
  {I}. {M}orrison.
\newblock  Volume 1515 of {\em Lecture Notes in Math.}, pages
  106--111.
\newblock Springer, Berlin, 1992.

\bibitem[SB06]{shepherdbarron}
N.~Shepherd-Barron,
\newblock Perfect forms and the moduli space of abelian varieties.
\newblock {\em Invent. Math.}, 163:25--45, 2006.

\bibitem[SV85]{smva5}
R.~Smith and R.~Varley.
\newblock Components of the locus of singular theta divisors of genus {$5$}.
\newblock In {\em Algebraic geometry, {S}itges ({B}arcelona), 1983}, volume
  1124 of {\em Lecture Notes in Math.}, pages 338--416. Springer, Berlin, 1985.

\bibitem[SV04]{SV04}
R.~Smith and R.~Varley.
\newblock A necessary and sufficient condition for {R}iemann's singularity
  theorem to hold on a {P}rym theta divisor.
\newblock {\em Compos. Math.}, 140(2):447--458, 2004.

\bibitem[SV12a]{smvagen4}
R.~Smith and R.~Varley.
\newblock Deformations of isolated even double points of corank one.
\newblock {\em Proc. Amer. Math. Soc.} to appear.

\bibitem[SV12b]{smvageng}
R.~Smith and R.~Varley.
\newblock A splitting criterion for an isolated singularity at $x=0$ in a family of even hypersurfaces.
\newblock {\em Manuscripta Math.} 137:233--245, 2012.

\bibitem[Vor1908]{voronoi}
G.~Voronoi.
\newblock Nouvelles Applications Des Param\`etres continus \`a la th\'eorie des formes quadratiques.
Premier m\'emoire. Sur quelques propri\'et\'es des formes quadratiques positives parfaites.
\newblock {\em J. Reine Angew. Math.} 133:79--178, 1909.


\bibitem[Wel85]{We85}
G.~Welters.
\newblock A theorem of {G}ieseker-{P}etri type for {P}rym varieties.
\newblock {\em Ann. Sci. \'Ecole Norm. Sup. (4)}, 18(4):671--683, 1985.

\bibitem[Yos99]{yoshikawa}
K.-I.~Yoshikawa.
\newblock Discriminant of theta divisors and {Q}uillen metrics.
\newblock {\em J. Differential Geom.}, 52(1):73--115, 1999.

\end{thebibliography}
\end{document}